\newtheorem{thm}{Theorem}[section]
 \newtheorem{cor}{Corollary}[section]
 \newtheorem{lem}{Lemma}[section]
 \newtheorem{prop}{Proposition}[section]
 \newtheorem{defn}{Definition}[section]
\newtheorem{rem}{Remark}[section]
\def\d{\partial}
\def\tilde{\widetilde}
\begin{document}
\title[Non-isentropic compressible Navier-Stokes system]{The large-time behavior of solutions in the critical $L^p$ framework for compressible viscous and heat-conductive gas flows}

\author{Weixuan Shi}
\address{Department of Mathematics, Nanjing
University of Aeronautics and Astronautics,
Nanjing 211106, P.R.China}

\address{Department of Mathematics and Statistics, McGill University, Montreal, Quebec H3A 2K6, Canada}
\email{wxshi168@163.com}

\author{Jiang Xu}
\address{Department of Mathematics, Nanjing
University of Aeronautics and Astronautics,
Nanjing 211106, P.R.China}
\email{jiangxu\underline{ }79math@yahoo.com}

\subjclass{76N15, 35Q30, 35L65, 35K65}
\keywords{non-isentropic Navier-Stokes equations; time-decay estimates; critical Besov spaces}

\begin{abstract}
The $L^{p}$ theory for non-isentropic Navier-Stokes equations governing compressible viscous and heat-conductive gases is not yet proved completely so far, because the critical regularity cannot control all non linear coupling terms. In this paper, we pose an additional regularity assumption of low frequencies in $\mathbb{R}^d(d\geq 3)$, and then the sharp time-weighted inequality can be established, which leads to the time-decay estimates of global strong solutions in the $L^{p}$ critical Besov spaces. Precisely, we show that if the initial data belong to some Besov space $\dot{B}^{-s_{1}}_{2,\infty}$ with $s_{1}\in (1-\frac{d}{2}, s_0](s_0\triangleq \frac{2d}{p}-\frac{d}{2})$, then the $L^{p}$ norm of the critical global solutions admits the time decay $t^{-\frac{s_{1}}{2}-\frac{d}{2}(\frac{1}{2}-\frac{1}{p})}$ (in particular, $t^{-\frac{d}{2p}}$ if $s_1=s_0$), which coincides with that of heat kernel in the $L^p$ framework. In comparison with \cite{DX2}, the low-frequency regularity $s_1$ can be improved to be \textit{the whole range}. 
\end{abstract}

\maketitle
\section{Introduction}\setcounter{equation}{0}
The compressible viscous and heat conductive gases reads as
\begin{equation} \label{Eq:1.1}
\left\{
\begin{array}{l}
\partial _{t}\varrho +\mathrm{div}\left( \varrho u\right) =0, \\ [2mm]
\partial _{t}( \varrho u) +\mathrm{div}\left( \varrho u\otimes u\right)+\nabla P=\mathrm{div}\,\tau,\\  [2mm]
\partial_{t}\left[\varrho\left(\frac{|u|^{2}}{2}+e\right)\right]+\mathrm{div}\left[u\left(\varrho\left(\frac{|u|^{2}}{2}+e\right)+P\right)\right]
=\mathrm{div}\left(\tau\cdot u-q\right)
\end{array}
\right.
\end{equation}
for $(t,x)\in \mathbb{R}_{+} \times \mathbb{R}^{d}$.
Here, $\varrho=\varrho(t,x)\in \mathbb{R}_{+}$ denotes the density, $u =u (t,x) \in \mathbb{R}^{d}$, the velocity field and $e=e(t,x)\in \mathbb{R}_{+}$, the internal energy per unit mass. We restrict ourselves to the case of a Newtonian fluid: the viscous stress tensor is $\tau=\lambda\,\mathrm{div}\,u \,\mathrm{Id}+2\mu\, D(u)$, where $D(u)\triangleq\frac{1}{2}\left(\nabla u+{}^T\!\nabla u\right)$ stands for the deformation tensor. The notations $\mathrm{div}$ and $\nabla$ are the divergence operator and gradient operator with respect to the spatial variable $x$, respectively. The Lam\'{e} coefficients $\lambda$ and $\mu$ (the \textit{bulk and shear viscosities}) are density-dependent functions, which are supposed to be smooth enough and to satisfy
\begin{equation}\label{Eq:1.2}
\mu>0 \ \ \hbox{and} \ \ \nu\triangleq\lambda +2\mu >0.
\end{equation}
The heat conduction $q$ is given by $q=-\kappa \nabla\mathcal{T}$, where $\mathcal{T}$ stands for the temperature. The heat conduction coefficient $\kappa$ is assumed to be density-dependent smooth function satisfying $\kappa>0$.

It follows from the second and third equations of \eqref{Eq:1.1} that
\begin{equation*}
\partial_{t}\left(\varrho e\right)+\mathrm{div}\left(\varrho ue\right)+P\mathrm{div}\,u=\mathrm{div}\left(\kappa(\varrho) \nabla \mathcal{T}\right)+2\mu D(u):D(u)+\lambda \left(\mathrm{div}\,u\right)^{2}.
\end{equation*}
In order to reformulate \eqref{Eq:1.1} in light of $\varrho$, $u$ and $\mathcal{T}$ only, we make the additional assumption that the internal energy $e=e(\varrho,\mathcal{T})$ satisfies Joule law:
\begin{equation}\label{Eq:1.3}
\partial _{\mathcal{T}}e=C_{v} \ \ \hbox{for some positive constant} \ \ C_{v}
\end{equation}
and that the pressure function $P=P(\varrho,\mathcal{T})$ is of the form
\begin{equation}\label{Eq:1.4}
P(\varrho,\mathcal{T})=\pi_{0}(\varrho)+\mathcal{T}\pi_{1}(\varrho),
\end{equation}
where $\pi_{0}$ and $\pi_{1}$ are given smooth functions. Such pressure laws cover the cases of ideal fluids (for which $\pi_{0}(\varrho)=0$ and $\pi_{1}(\varrho)=R\varrho$ for a universal constant $R>0$), of barotropic fluids ($\pi_{1}(\varrho)=0$), and of Van der Waals fluids ($\pi_{0}=-\alpha\varrho^{2}$, $\pi_{1}=\beta \varrho/(\delta-\varrho)$ with $\alpha,\beta, \delta >0$). With the aid of the Gibbs relations for the internal energy and the Helmholtz free energy, we have the Maxwell relation
\begin{equation*}
\varrho^{2}\partial_{\varrho}e(\varrho,\mathcal{T})=P(\varrho,\mathrm{T})-\mathcal{T}\partial_{\mathcal{T}}P(\varrho,\mathcal{T})=\pi_{0}(\varrho),
\end{equation*}
and end up with the following temperature equation:
\begin{equation} \label{Eq:1.5}
\varrho C_{v}\left(\partial_{t}\mathcal{T}+u\cdot\nabla \mathcal{T}\right)+\mathcal{T}\pi_{1}\left(\varrho\right)\mathrm{div}u=2\mu\, D(u):D(u)+\lambda \left(\mathrm{div}\,u\right)^{2}+\mathrm{div}\left(\kappa (\varrho) \nabla \mathcal{T}\right).
\end{equation}
We focus on solutions that are close to some constant equilibrium $(\varrho_{\infty},0,\mathcal{T}_{\infty})$ with $\varrho_{\infty}>0$ and $\mathcal{T}_{\infty}>0$ fulfilling the linear stability condition:
\begin{equation}\label{Eq:1.6}
\partial_{\varrho}P(\varrho_{\infty},\mathcal{T}_{\infty})>0 \ \ \hbox{and} \ \ \partial_{\mathcal{T}}P(\varrho_{\infty},\mathcal{T}_{\infty})>0.
\end{equation}

If System \eqref{Eq:1.1} is written in terms of  $(\varrho,u,\mathcal{T})$, then it is not difficult to see that \eqref{Eq:1.1} is scaling invariant (neglecting the lower order pressure term) under the following transformation.
\begin{equation} \label{scaling}
\varrho(t,x) \rightsquigarrow \varrho(l^{2}t,lx),\ \ u(t,x) \rightsquigarrow lu(l^{2}t,lx), \ \ \mathcal{T}(t,x) \rightsquigarrow l^{2} \mathcal{T}(l^{2}t,lx), \ \ l>0.
\end{equation}
Consequently, some so-called \textit{critical spaces} was employed to solve \eqref{Eq:1.1}, whose norms are invariant with respect to the scaling.
To the best of our knowledge, the point of view of scaling invariance is now classical and stems from the study of incompressible Navier-Stokes equations, see \cite{CM,FK,KY} and references therein. In comparison with isentropic case (see\cite{CD1,CMZ1,DR1,DR4,DH,DX1,HB,OM,XJ}), the $L^{p}$ theory of \eqref{Eq:1.1} is not completely proved yet. Danchin \cite{DR2} first used general $L^p$ Besov space (chain of spaces $\dot{B}^{d/p}_{p,1}\times\dot{B}^{d/p-1}_{p,1}\times\dot{B}^{d/p-2}_{p,1}$ in fact) and established the local existence and uniqueness of solutions of \eqref{Eq:1.1}. Later, Chikami and Danchin \cite{CD2} performed Lagrangian approach and Banach fixed point theorem to improve those results as in \cite{DR2} such that $1<p<d$ and $d\geq 3$. The exponent $p$ seems to be optimal since the ill-posedness of \eqref{Eq:1.1} in dimension three in the sense that the continuity of data-solution map fails at the origin, was established by Chen, Miao and Zhang\cite{CMZ2} if $p>3$. Danchin \cite{DR3} constructed the global existence and uniqueness of strong solutions to \eqref{Eq:1.1} in the $L^{2}$ critical hybrid Besov spaces (in space dimension $d\geq 3$). Recently, Danchin \& He \cite{DH}
gave the $L^{p}$ extension of \cite{DR3}. For simplicity, those physical coefficients $\lambda$, $\mu$ and $\kappa$ are assumed to be constant. In fact, their results still hold true in case that $\lambda$, $\mu$ and $\kappa$ depend smoothly on the density.

A natural question is what is the large time asymptotic description of the constructed solution in \cite{DH}. For that issue, recall that in the framework of high Sobolev regularity, Matsumura and Nishida \cite{MN1} obtained the fundamental $L^{1}$-$L^{2}$ decay estimate, by assuming the initial data are the small perturbation in $H^{3}(\mathbb{R}^{3})\times L^{1}(\mathbb{R}^{3})$ of $(\varrho_{\infty},0,\mathcal{T}_{\infty})$:
\begin{equation} \label{Eq:1.7}
\|(\varrho-\varrho_{\infty},u,\mathcal{T}-\mathcal{T}_{\infty})(t)\|_{L^{2}(\mathbb{R}^{3})}\lesssim \langle t\rangle^{-\frac{3}{4}}\quad \hbox{with} \quad
\langle t\rangle\triangleq \sqrt{1+t^2}.
\end{equation}

Shortly after Matsumura and Nishida, still for with high Sobolev regularity, there are a number of results on the large-time behavior of solutions to the compressible Navier-Stokes system (also including the present full case), see \cite{HZ, KK1, KK2,KS, KT, LDL, LW, MN2, ZY} and references therein. Precisely, the result of \cite{MN1} was generalized to more physical situations where the fluid domain is not $\mathbb{R}^{d}$: for instance, the exterior domains were studied by Kobayashi \cite{KT} and Kobayashi \& Shibata \cite{KS}, and the half spaces were investigated by Kagei \& Kobayashi \cite{KK1,KK2}. On the other hand, there are some results available which are connected to the wave aspect of the solutions. In one dimension space, Zeng \cite{ZY} presented the $L^{1}$ convergence to the nonlinear Burgers' diffusive wave. Hoff and Zumbrun \cite{HZ} performed the detailed analysis of the Green function for the multi-dimensional case and established the $L^{\infty}$ decay rates of diffusion waves. In \cite{LW}, Liu and Wang gave pointwise convergence of solution to diffusion waves with the optimal time-decay estimate in odd dimension, where the phenomena of the weaker Huygens' principle was also shown. This was generalized later to \eqref{Eq:1.1} in \cite{LDL}. In the critical regularity framework however, there are few results concerning the time-decay estimates of (strong) global solutions to the Cauchy problem of \eqref{Eq:1.1}. Very recently, Danchin and the second author \cite{DX2} made an attempt, where the initial data are additionally assumed to in $\dot{B}^{-s_{1}}_{2,\infty}$ with $\left[\max(0,2-\frac{d}{2}), s_{0}\right](s_{0}\triangleq\frac{2d}{p}-\frac{d}{2})$. Consequently, the $L^{p}$ norm of solutions (the slightly stronger $\dot{B}^{0}_{p,1}$ norm in fact) decays as fast as $t^{-\frac{s_1}{2}}$. In particular, the rate is of  $O(t^{-d(\frac{1}{p}-\frac{1}{4})})$ in case of $s_1=s_0$. However, that is \textit{not optimal} in sense of the decay rate of heat kernel (see Remark \ref{Rem1.3} below).


\subsection{Main results}

To simplify the statement, let us assume that the density and the temperature tend to some positive constants $\varrho_{\infty}$ and $\mathcal{T}_{\infty}$, at infinity. Setting $\mathcal{A}\triangleq \mu_{\infty}\Delta+(\lambda_{\infty}
+\mu_{\infty})\nabla\mathrm{div}$, $\varrho=\varrho_{\infty}(1+b)$ and $\mathcal{T}=\mathcal{T}_{\infty}+\mathcal{E}$, we see from \eqref{Eq:1.1} and \eqref{Eq:1.5} that, whenever $b>-1$, the triplet $(b,u,\mathcal{E})$ fulfills
\begin{equation*}
\left\{
\begin{array}{l}
\partial _{t}b+u\cdot\nabla b +(1+b)\mathrm{div} u=0, \\[2mm]
\partial _{t}u+u\cdot\nabla u+\frac{\partial_{\varrho}P\left(\varrho_{\infty}(1+b),\mathcal{T}_{\infty}\right)}{1+b}\nabla b+\frac{\pi_{1}\left(\varrho_{\infty}(1+b)\right)}{\varrho_{\infty}(1+b)}\nabla\mathcal{E}+\frac{\pi'_{1}(\varrho_{\infty}(1+b))}{1+b}\mathcal{E}\nabla b\\[2mm]
\hspace{1.8cm}=\frac{1}{\varrho_{\infty}(1+b)} \mathrm{div}\left(2\mu (\varrho_{\infty}(1+b))D(u)+\lambda (\varrho_{\infty}(1+b))\mathrm{div}\,u \,\mathrm{Id}\right),\\[2mm]
\partial_{t}\mathcal{E}+u\cdot\nabla \mathcal{E}+\left(\mathcal{T}_{\infty}
+\mathcal{E}\right)\frac{\pi_{1}\left(\varrho_{\infty}(1+b)\right)}
{\varrho_{\infty}C_{v}(1+b)}\mathrm{div}\,u=\frac{1}{\varrho_{\infty}C_{v}(1+b)}(\mathrm{div}\left(\kappa(b)\nabla \mathcal{E} \right)\\[2mm]
\hspace{1.8cm}+2\mu(\varrho_{\infty}(1+b)) D(u):D(u)+\lambda(\varrho_{\infty}(1+b))\left(\mathrm{div}\,u\right)^{2}).
\end{array}
\right.
\end{equation*}
Then, setting $\nu\triangleq2\mu_{\infty}+\lambda_{\infty}$ ($\mu_{\infty}=\mu(\varrho_{\infty}) \ and \ \lambda_{\infty}=\lambda(\varrho_{\infty})$), $\nu_{\infty}\triangleq\frac{\nu}{\varrho_{\infty}}$, $\chi_{0}\triangleq \partial_{\varrho }P(\varrho_{\infty},\mathcal{T}_{\infty})^{-\frac{1}{2}}$, and performing the change of unknowns
\begin{equation*}
a(t,x)=b(\nu_{\infty}\chi^{2}_{0}\,t,\nu_{\infty}\chi_{0}\,x),\ \ \ v(t,x)=\chi_{0}u(\nu_{\infty}\chi^{2}_{0}\,t,\nu_{\infty}\chi_{0}\,x),
\end{equation*}
\begin{equation*}
\theta(t,x)=\chi_{0}\sqrt{\frac{C_{v}}{\mathcal{T}_{\infty}}}\mathcal{E}(\nu_{\infty}\chi^{2}_{0}\,t,\nu_{\infty}\chi_{0}\,x),
\end{equation*}
we finally get
\begin{equation}\label{Eq:1.8}
\left\{
\begin{array}{l}
\partial _{t}a+\mathrm{div}\,v=f, \\[2mm]
\partial _{t}v-\tilde{\mathcal{A}}v+\nabla a+\gamma \nabla \theta=g,\\[2mm]
\partial_{t}\theta-\beta\Delta\theta+\gamma\,\mathrm{div}\,v=k
\end{array}
\right.
\end{equation}
with
\begin{equation*}
\tilde{\mathcal{A}}\triangleq\frac{\mathcal{A}}{\nu}, \ \ \beta\triangleq \frac{\kappa_{\infty}}{\nu C_{v}} \  \left(\kappa_{\infty}=\kappa(\varrho_{\infty})\right), \ \ \gamma\triangleq \frac{\chi_{0}}{\varrho_{\infty}}\sqrt{\frac{\mathcal{T}_{\infty}}{C_{v}}}\pi_{1}(\varrho_{\infty}),
\end{equation*}
and where the nonlinear terms $f$, $g$ and $k$ are given by
\begin{eqnarray*}
	f&\triangleq&-\mathrm{div}(av), \\
	g &\triangleq& -v\cdot \nabla v-I(a)\tilde{\mathcal{A}}v-K_{1}(a)\nabla a-K_{2}(a)\nabla \theta-\theta \nabla K_{3}(a)\\
	&&+\frac{1}{(1+a)\nu}\mathrm{div}\left(2\tilde{\mu}(a) D(v)+\tilde{\lambda}(a) \mathrm{div}\, v \,\mathrm{Id}\right),\\
	k &\triangleq&-v \cdot \nabla \theta -\beta I(a) \Delta \theta -\left(\tilde{K}_{1}(a)+\tilde{K}_{2}(a)\,\theta\right) \mathrm{div}\, v+\frac{1}{\nu(1+a)}\mathrm{div}(\tilde{\kappa}(a)\nabla \theta)\\
	&&+\frac{1}{\nu\chi_{0}}\sqrt{\frac{1}{\mathcal{T}_{\infty}C_{v}}}\left(\frac{2\mu\left(\varrho_{\infty}(1+a)\right)}{1+a} D(v): D(v)
	+\frac{\lambda\left(\varrho_{\infty}(1+a)\right)}{1+a}(\mathrm{div}\,v)^{2}\right)
\end{eqnarray*}
with
\begin{eqnarray*}
&&\hspace{-6mm}I(a) \triangleq \frac{a}{1+a}, \ \ \tilde{\mu}(a)\triangleq \mu(\varrho_{\infty}(1+a))-\mu(\varrho_{\infty}), \ \
\tilde{\lambda}(a)\triangleq\lambda(\varrho_{\infty}(1+a))-\lambda(\varrho_{\infty}), \\
&&\hspace{-6mm}K_{1}(a)\triangleq \frac{\partial_{\varrho}P\left(\varrho_{\infty}(1+a),\mathcal{T}_{\infty}\right)}{(1+a)\partial_{\varrho}P(\varrho_{\infty},\mathcal{T}_{\infty})}-1, \  K_{2}(a)\triangleq  \frac{\chi_{0}}{\varrho_{\infty}}\sqrt{\frac{\mathcal{T}_{\infty}}{C_{v}}}\big(\frac{\pi_{1}\big(\varrho_{\infty}(1+a)\big)}{1+a}-\pi_{1}(\varrho_{\infty})\big),
\\
&&\hspace{-6mm}K_{3}(a)\triangleq \chi_{0} \sqrt{\frac{\mathcal{T}_{\infty}}{C_{v}}}\int_{0}^{a} \frac{\pi'_{1}\left(\varrho_{\infty}(1+z)\right)}{1+z}dz, \ \ \
\tilde{K}_{2}(a)\triangleq\frac{\pi_{1}(\varrho_{\infty}(1+a))}{C_{v}\varrho_{\infty}(1+a)}, \\
&&\hspace{-6mm} \tilde{K}_{1}(a)\triangleq \frac{\chi_{0}}{\varrho_{\infty}} \sqrt{\frac{\mathcal{T}_{\infty}}{C_{v}}}\big(\frac{\pi_{1}\left(\varrho_{\infty}(1+a)\right)}{1+a}-\pi_{1}(\varrho_{\infty})\big), \ \  \tilde{\kappa}(a)\triangleq\kappa(\varrho_{\infty}(1+a))-\kappa(\varrho_{\infty}).
\end{eqnarray*}
Note that $K_{1}$, $K_{2}$, $K_{3}$ $\widetilde{K}_{1}$, $\widetilde{K}_{2}$, $\widetilde{\mu}$, $\widetilde{\lambda}$ and $\widetilde{\kappa}$ are smooth functions satisfyiny $K_{1}(0)=K_{2}(0)=K_{3}(0)=\tilde{K}_{1}(0)=\tilde{\mu}(0)=\tilde{\lambda}(0)=\widetilde{\kappa}(0)=0$.

The main result of the paper is stated as follows.
\begin{thm} \label{Thm1.1} Let $\varrho_{\infty}>0$ and $\mathcal{T}_{\infty}$ be two constant such that \eqref{Eq:1.6} is fulfilled. Suppose that
 $d\geq3$, and that $p$ satisfies
\begin{equation}\label{Eq:1.9}
2\leq p<d \ \ \hbox{and} \ \ p\leq \frac{2d}{d-2}.
\end{equation}
Let $(a,v,\theta)$ be the corresponding global solution to \eqref{Eq:1.8} with the initial data $
(a,v,\theta)|_{t=0}=(a_{0},v_{0},\theta_{0})$, which was constructed in \cite{DH}. Let
\begin{equation}\label{Eq:1.10}
1-\frac{d}{2}<s_{1}\leq s_{0} \Big(s_{0}\triangleq \frac{2d}{p}-\frac{d}{2}\Big).
\end{equation}
There exists a positive constant $c=c\left( p,d,\lambda,\mu,P,\kappa,C_{v},\varrho_{\infty},\mathcal{T}_{\infty}\right)$ such that if
\begin{equation}\label{Eq:1.11}
\mathcal{D}_{p,0}\triangleq \left\|\left(a_{0},v_{0}, \theta_{0}\right)\right\|^{\ell}_{\dot{B}_{2,\infty}^{-s_{1}}}\leq c,
\end{equation}
then it holds that
\begin{equation}\label{Eq:1.12}
\mathcal{D}_{p}(t)\lesssim \big(\mathcal{D}_{p,0}+\left\|\left(\nabla a_{0},v_{0}\right)\right\|^{h}_{\dot{B}_{p,1}^{\frac {d}{p}-1}}+\left\|\theta_{0}\right\|^{h}_{\dot{B}_{p,1}^{\frac {d}{p}-2}}\big) \ \ \hbox{for all} \ \ t\geq 0,
\end{equation}
where the functional $\mathcal{D}_{p}(t)$ is defined by
\begin{eqnarray}\label{Eq:1.13}
\mathcal{D}_{p}(t) &\triangleq &\sup_{s\in[\varepsilon-s_{1},\frac {d}{2}+1]}\|\langle\tau\rangle^{\frac{s_{1}+s}2}(a,v,\theta)\|_{L^{\infty}_{t}(\dot {B}^{s}_{2,1})}^{\ell}+\|\langle\tau\rangle^{\alpha}(\nabla a,v)\|_{\widetilde{L} ^{\infty}_{t}(\dot {B}^{\frac {d}{p}-1}_{p,1})}^{h} \nonumber \\
&&+\|\langle\tau\rangle^{\alpha} \theta \|_{\widetilde{L}^{\infty}_{t}(\dot{B}^{\frac{d}{p}-2}_{p,1})}^{h}+\|\tau^{\alpha}(\nabla v,\theta )\|_{\widetilde{L}^{\infty}_{t}(\dot {B}^{\frac {d}{p}}_{p,1})}^{h}
\end{eqnarray}
with $\alpha\triangleq s_{1}+\frac{d}{2}+\frac{1}{2}-\varepsilon$ for sufficiently small $\varepsilon>0$.
\end{thm}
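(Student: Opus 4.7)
The plan is a time-weighted bootstrap built on the hybrid low/high-frequency decomposition of \cite{DR3,DH,DX2}. Fix a dyadic threshold $J_{0}$ separating the $(\cdot)^{\ell}$ and $(\cdot)^{h}$ parts, and analyse the linearisation of \eqref{Eq:1.8} about the constant state. At low frequencies the $3\times3$ symbol is a parabolic perturbation of a damped acoustic/heat system: each of its three eigenvalues has real part comparable to $-|\xi|^{2}$, which on low-frequency dyadic blocks yields the heat-type decay
\[
\|e^{tL}U_{0}\|^{\ell}_{\dot{B}^{s}_{2,1}}\;\lesssim\;\langle t\rangle^{-\frac{s+s_{1}}{2}}\|U_{0}\|^{\ell}_{\dot{B}^{-s_{1}}_{2,\infty}}\qquad(s>-s_{1}),
\]
while at high frequencies one has the exponential damping and the parabolic smoothing of \cite{DH} for $(v,\theta)$. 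These match, term by term, the four components of $\mathcal{D}_{p}(t)$ in \eqref{Eq:1.13}, so it suffices to prove the nonlinear inequality
\[
\mathcal{D}_{p}(t)\;\lesssim\;\mathcal{X}_{0}+\bigl(\mathcal{D}_{p}(t)+\mathcal{E}(t)\bigr)\mathcal{D}_{p}(t),
\]
where $\mathcal{X}_{0}$ denotes the data norm on the right of \eqref{Eq:1.12} and $\mathcal{E}(t)$ is the uniform-in-time critical energy functional already controlled in \cite{DH}. Since $\mathcal{E}(t)$ is small by hypothesis, a standard continuity argument then closes \eqref{Eq:1.12}.

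To bound the low-frequency part of $\mathcal{D}_{p}(t)$, I apply Duhamel's formula and the linear decay above; the task reduces to time-weighted estimates
\[
\sup_{s\in[\varepsilon-s_{1},\frac{d}{2}+1]}\bigl\|\langle\tau\rangle^{\frac{s+s_{1}}{2}}(f,g,k)\bigr\|^{\ell}_{L^{\infty}_{t}(\dot{B}^{-s_{1}}_{2,\infty})}\;\lesssim\;\mathcal{D}_{p}(t)^{2}+\mathcal{D}_{p}(t)\mathcal{E}(t).
\]
Bony's decomposition together with the endpoint product law in $\dot{B}^{-s_{1}}_{2,\infty}$ (valid throughout $s_{1}\in(1-\tfrac{d}{2},s_{0}]$) reduces every nonlinear product --- for instance $\operatorname{div}(av)$, $\theta\nabla K_{3}(a)$, $I(a)\tilde{\mathcal{A}}v$, $D(v){:}D(v)$ and $(\operatorname{div}v)^{2}$ --- to a pair of factors controlled respectively by a low-frequency piece of $\mathcal{D}_{p}(t)$ (carrying the time weight $\langle\tau\rangle^{(s+s_{1})/2}$) and by a high-frequency piece of $\mathcal{D}_{p}(t)$ or $\mathcal{E}(t)$. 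Composition estimates for the smooth functions $I,K_{i},\tilde{K}_{i},\tilde{\mu},\tilde{\lambda},\tilde{\kappa}$, all vanishing at $0$, are applied as in \cite{DR3}. The high-frequency contributions of $\mathcal{D}_{p}(t)$ are treated by the maximal-regularity theory of \cite{DH} for the Lam\'e and heat semigroups, the power $\tau^{\alpha}$ being absorbed after splitting $(0,t)=(0,1)\cup(1,t)$ and using the elementary bound $\int_{0}^{t}e^{-c(t-\tau)2^{2j}}\langle\tau\rangle^{\alpha}\,d\tau\lesssim 2^{-2j}\langle t\rangle^{\alpha}$.

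The principal obstacle is exactly what forced the restriction $s_{1}\geq \max(0,2-\tfrac{d}{2})$ in \cite{DX2}: as $s_{1}$ drops toward $1-\tfrac{d}{2}$, the low-frequency estimates of $\gamma\nabla\theta$, $\gamma\operatorname{div}v$ and of the quadratic terms $I(a)\tilde{\mathcal{A}}v$, $\beta I(a)\Delta\theta$, $\theta\operatorname{div}v$, $D(v){:}D(v)$, $(\operatorname{div}v)^{2}$ fall short by exactly one derivative under the rough product rule. The new ingredient is to spend on such critical pairings the parabolic-gain term $\tau^{\alpha}(\nabla v,\theta)\in\widetilde{L}^{\infty}_{t}(\dot{B}^{d/p}_{p,1})$ at high frequencies, together with the smoother low-frequency norm $\langle\tau\rangle^{(s+s_{1})/2}(a,v,\theta)\in\dot{B}^{d/2+1}_{2,1}$ --- both built into $\mathcal{D}_{p}(t)$ precisely for this purpose. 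Inserting these two gains into Bony's decomposition supplies the missing derivative and allows one to push $s_{1}$ down to the whole range $(1-\tfrac{d}{2},s_{0}]$, thereby restoring the heat-kernel decay rate announced in the abstract.
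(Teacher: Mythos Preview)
Your overall architecture---low/high split, Duhamel with heat-type decay at low frequencies, time-weighted bootstrap closing via smallness of the critical energy---matches the paper. But two points are off. First, listing $\gamma\nabla\theta$ and $\gamma\,\mathrm{div}\,v$ among the obstructive terms is a confusion: these are linear couplings sitting on the left of \eqref{Eq:1.8}, already absorbed into the semigroup $E(t)$ whose spectral decay you invoked; they never appear in the Duhamel integral. Second, and more seriously, the high-frequency piece $\|\langle\tau\rangle^{\alpha}\nabla a\|^{h}_{\widetilde{L}^{\infty}_{t}(\dot{B}^{d/p-1}_{p,1})}$ cannot be obtained from ``maximal-regularity for the Lam\'e and heat semigroups'': the density equation has no parabolic smoothing, and your kernel bound $\int_{0}^{t}e^{-c(t-\tau)2^{2j}}\langle\tau\rangle^{\alpha}\,d\tau\lesssim 2^{-2j}\langle t\rangle^{\alpha}$ applies to $v,\theta$ but not to $a$. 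The paper extracts the high-frequency damping for $a$ via Haspot's effective velocity $w=\nabla(-\Delta)^{-1}(a-\mathrm{div}\,v)$, which turns the system into a damped transport equation $\partial_{t}a+a=f-\mathrm{div}\,w$ (rate $e^{-c_{0}(t-\tau)}$, independent of $j$) coupled to genuinely parabolic equations for $w,\theta,\mathcal{P}v$; see Proposition~\ref{Prop3.2} and the derivation of \eqref{Eq:3.23}. Without this (or an equivalent diagonalisation) your high-frequency step does not close.

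On the low-frequency side, your description of the ``new ingredient'' points in the right direction but is too coarse to actually reach $s_{1}\searrow 1-\tfrac{d}{2}$. The paper does not simply feed the parabolic-gain norm and the $\dot{B}^{d/2+1}_{2,1}$ norm into Bony's decomposition; it first splits each nonlinear term into a piece containing only low-frequency factors and a piece with at least one high-frequency factor (the $(f^{\ell},g^{\ell},k^{\ell})$ versus $(f^{h},g^{h},k^{h})$ decomposition in Lemmas~\ref{Lem3.2}--\ref{Lem3.3}), and then applies the non-standard product laws \eqref{Eq:3.6}--\eqref{Eq:3.8} (in particular $\|FG\|_{\dot{B}^{d/p-d/2-s_{1}}_{2,\infty}}\lesssim\|F\|_{\dot{B}^{d/p-1}_{p,1}}\|G\|_{\dot{B}^{d/p-d/2-s_{1}+1}_{2,1}}$) to the low-low pieces, together with \eqref{Eq:3.17} and \eqref{Eq:3.20} for the pieces carrying $\theta^{h}$ or $\nabla v^{h}$. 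The treatment of $k_{1}(a,\theta^{h})$ even requires a case distinction $p\leq 2d/3$ versus $p>2d/3$. It is these product inequalities, not the mere presence of the gain terms in $\mathcal{D}_{p}(t)$, that recover the missing derivative across the whole range \eqref{Eq:1.10}; your sketch should make this mechanism explicit.
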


\begin{rem} \label{Rem1.1}
Theorem \ref{Thm1.1} investigates the case of $s_{1}$ belonging to the whole range $(1-\frac{d}{2}, s_{0}]$, which is open left in \cite{DX2}. The sharp lower bound stems from the elementary time-decay inequality. More precisely,
$$\int^t_{0}\langle t-\tau\rangle^{-\frac{s_1+s}{2}}\langle\tau\rangle^{-\delta}d\tau\lesssim \langle t\rangle^{-\frac{s_1+s}{2}},\ \ \
0\leq\frac{s_1+s}{2}\leq \delta, \ \ \delta>1.
$$
In subsequent low-frequency analysis, the minimum value of $\delta$ is $s_{1}/2+d/4+1/2$, owing to $s\leq d/2+1$. Consequently, $s_{1}/2+d/4+1/2>1$ yields the desired lower bound. In addition, Theorem \ref{Thm1.1} holds in case that $\mu$, $\lambda$ and $\kappa$ depend smoothly on the density.
\end{rem}

\begin{rem} \label{Rem1.2}
If \eqref{Eq:1.11} is replaced by the slightly stronger hypothesis:
\begin{equation*}
\|(a_{0},v_{0}, \theta_{0})\|^{\ell}_{\dot{B}_{2,1}^{-s_{1}}}\leq  c \ll 1,
\end{equation*}
then one can take $\varepsilon=0$ in both $\alpha$ and $\mathcal{D}_{p}(t)$.
\end{rem}

As a consequence of Theorem \ref{Thm1.1}, the time decay estimates of the $L^{p}$ norm (the slightly stronger $\dot{B}^{0}_{p,1}$ norm in fact) of solutions.
\begin{cor} \label{Cor1.1}
Under the additional assumption \eqref{Eq:1.10}-\eqref{Eq:1.11}, the global solution satisfies
\begin{eqnarray*}
	\|\Lambda^{s}a\|_{L^{r}}\lesssim A_{0}
\langle t\rangle^{-\frac{s_{1}+s}{2}-\frac{d}{2}\big(\frac{1}{2}-\frac{1}{r}\big)}\ \ \hbox{if} \  -\tilde{s}_{1}<s+d\left(\frac{1}{p}-\frac{1}{r}\right)\leq\frac{d}{p},\\
\|\Lambda^{s}v\|_{L^{r}}\lesssim A_{0} t^{-\frac{s_{1}+s}{2}-\frac{d}{2}\big(\frac{1}{2}-\frac{1}{r}\big)}\ \
\hbox{if} \ -\tilde{s}_{1}<s+d\left(\frac{1}{p}-\frac{1}{r}\right)\leq\frac{d}{p}+1,\\
\|\Lambda^{s}\theta\|_{L^{r}}\lesssim A_{0} t^{-\frac{s_{1}+s}{2}-\frac{d}{2}\big(\frac{1}{2}-\frac{1}{r}\big)}\ \
\hbox{if} \ -\tilde{s}_{1}<s+d\left(\frac{1}{p}-\frac{1}{r}\right)\leq\frac{d}{p},
	\end{eqnarray*}
for $t\geq 0$ and $p\leq r\leq\infty$, where $\tilde{s}_{1}\triangleq s_{1}+d\big(\frac{1}{2}-\frac{1}{p}\big),  A_{0}\triangleq\big(\mathcal{D}_{p,0}+\|(\nabla a_{0},v_{0})\|^{h}_{\dot{B}_{p,1}^{\frac {d}{p}-1}}+\|\theta_{0}\|^{h}_{\dot{B}_{p,1}^{\frac {d}{p}-2}}\big)$ and the operator $\Lambda^{s}$ is defined by $\Lambda^{s}f\triangleq\mathcal{F}^{-1}\left(|\xi|^{s}\mathcal{F}f\right)$ for $s\in\mathbb{R}$.
\end{cor}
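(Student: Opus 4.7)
The plan is to deduce the $L^{r}$ decay estimates from the weighted Besov-norm bounds already packaged into the functional $\mathcal{D}_{p}(t)$ of Theorem \ref{Thm1.1}, by decomposing each unknown into its low- and high-frequency parts and applying Bernstein-type inequalities in each regime. For $f\in\{a,v,\theta\}$ write $\Lambda^{s} f=\Lambda^{s} f^{\ell}+\Lambda^{s}f^{h}$ and treat the two pieces independently.

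For the low-frequency part, since the spectra lie in a fixed ball, Bernstein yields $\|\Lambda^{s} f^{\ell}\|_{L^{r}}\lesssim \|f^{\ell}\|_{\dot B^{s+d/2-d/r}_{2,1}}$ for any $2\le r\le\infty$. Setting $\sigma\triangleq s+d/2-d/r$, the hypothesis $s+d(1/p-1/r)>-\tilde{s}_{1}$ rewrites, via $\tilde{s}_{1}=s_{1}+d(1/2-1/p)$, as $\sigma>-s_{1}$; the various upper-range conditions give $\sigma\le d/2+1$. Hence $\sigma\in[\varepsilon-s_{1},d/2+1]$ for $\varepsilon>0$ small enough, and $\mathcal{D}_{p}(t)$ supplies
\[
\|f^{\ell}\|_{\dot B^{\sigma}_{2,1}}\lesssim A_{0}\,\langle t\rangle^{-\frac{s_{1}+\sigma}{2}}=A_{0}\,\langle t\rangle^{-\frac{s_{1}+s}{2}-\frac{d}{2}(\frac{1}{2}-\frac{1}{r})},
\]
which is already the target rate.

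For the high-frequency part, Bernstein in the other direction (using $r\ge p$) gives $\|\Lambda^{s} f^{h}\|_{L^{r}}\lesssim \|f^{h}\|_{\dot B^{s+d/p-d/r}_{p,1}}$, and on high frequencies $\|f^{h}\|_{\dot B^{\sigma}_{p,1}}\lesssim \|f^{h}\|_{\dot B^{\sigma'}_{p,1}}$ whenever $\sigma\le\sigma'$. For $a$, this allows the use of $\|a^{h}\|_{\dot B^{d/p}_{p,1}}\simeq \|\nabla a^{h}\|_{\dot B^{d/p-1}_{p,1}}\lesssim A_{0}\langle\tau\rangle^{-\alpha}$, which is admissible by $s+d(1/p-1/r)\le d/p$. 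For $v$, one uses either $\|v^{h}\|_{\dot B^{d/p-1}_{p,1}}\lesssim A_{0}\langle\tau\rangle^{-\alpha}$ when $s+d(1/p-1/r)\le d/p-1$, or otherwise $\|\nabla v^{h}\|_{\dot B^{d/p}_{p,1}}\lesssim A_{0}\,\tau^{-\alpha}$ (covered by the larger upper bound $d/p+1$); the case of $\theta$ is analogous. Comparing $\alpha=s_{1}+d/2+1/2-\varepsilon$ with the target exponent $(s_{1}+s)/2+d/2(1/2-1/r)$ reduces to $s_{1}+d/2+1-2\varepsilon\ge s-d/r$, which is satisfied thanks to the upper-range conditions ($s-d/r\le 1$) and the standing lower bound $s_{1}>1-d/2$. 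Hence the high-frequency contribution decays at least as fast as the low-frequency one, and summing the two pieces yields the claim; the only real bookkeeping point—and the reason the corollary states $\langle t\rangle^{-\beta}$ decay for $a$ but only $t^{-\beta}$ for $v,\theta$—is that for $a$ the relevant high-frequency norm carries the weight $\langle\tau\rangle^{\alpha}$, whereas for $v,\theta$ one is sometimes forced to invoke the higher-regularity norm, which carries only $\tau^{\alpha}$.
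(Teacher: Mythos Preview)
Your proof is correct and follows essentially the same route as the paper's: both arguments split each unknown into low and high frequencies, pass to Besov norms via Bernstein-type embeddings (the paper phrases this as $\dot{B}_{p,1}^{s+d(1/p-1/r)}\hookrightarrow\dot{B}^{s}_{r,1}$, then further embeds the low-frequency part into $\dot{B}^{s+d(1/2-1/r)}_{2,1}$), and then read off the decay from the bounds encoded in $\mathcal{D}_{p}(t)$ together with the check that $\alpha$ dominates the target exponent. Your explicit bookkeeping on why $a$ gets the $\langle t\rangle$-weight while $v,\theta$ only get the $t$-weight is a nice clarification that the paper leaves implicit.
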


\begin{rem} \label{Rem1.3}
For convenience of reader, let us show the decay rates of heat kernel $E(t)U_{0}\triangleq e^{-t\Delta}U_{0}$ first.
In Fourier variable, we have
$$\mathcal{F}[E(t)U_{0}](\xi)=e^{-t|\xi|^2}\mathcal{F}U_{0}(\xi).$$
It follows from  Hausdorff-Young and H\"{o}lder inequalities that
\begin{eqnarray*}
\|E(t)U_{0}\|_{L^p}\leq \|\mathcal{F}[E(t)U_{0}](\xi)\|_{L^{p'}}\leq\|\mathcal{F}U_{0}(\xi)\|_{L^{q'}}\|e^{-t|\xi|^2}\|_{L^{m}}\lesssim \|U_0\|_{L^q}t^{-\frac{d}{2m}},
\end{eqnarray*}
where $1/p+1/p'=1/q+1/q'=1$, $1/p'=1/q'+1/m$ and $p\geq2$. Hence, one can get $m=p$ if choosing $q=p/2$, that is, the heat kernel enjoys the time-decay rate of
$O(t^{-\frac{d}{2p}})$ in $L^{p}$ norm if $U_{0}\in L^{p/2}$. Noticing the embedding $L^{p/2}\hookrightarrow\dot{B}_{2,\infty}^{-s_{0}}$, we see that the global solution of \eqref{Eq:1.8} decays to constant equilibrium with the same rate if taking the endpoint regularity $s_{1}=s_{0}$. Those decay rates in Corollary \ref{Cor1.1} are thus optimal and satisfactory.
\end{rem}

Prompted by the recent work dedicated to the compressible barotropic flow (see \cite{DX1}), we here aim at proving Theorem \ref{Thm1.1}. The additional unknown $\theta$ \textit{cannot contribute more regularities} in term of \eqref{scaling}, so those nonlinear terms between density, velocity field and temperature need to be treated carefully. Up to now, the global-in-time existence and large-time behavior of solutions of \eqref{Eq:1.1} \textit{remains open in dimension two},
which is left for future consideration. In contrast to \cite{DX2}, the low-frequency analysis for $1-d/2<s_{1}<\max(0,2-d/2)$ is much more technical. Owing to the heat smoothing effect, it is possible to adapt the standard Duhamel principle treating the nonlinear right-hand side $(f,g,k)$ of \eqref{Eq:1.8}.
Precisely, we split the nonlinear term $(f,g,k)$ into $(f^{\ell},g^{\ell},k^{\ell})$ and $(f^{h},g^{h},k^{h})$ (see the context below). In order to handle $(f^{\ell},g^{\ell},k^{\ell})$ in the time-weighted integral, some new and non-standard Besov product estimates are well developed, see \eqref{Eq:3.6}-\eqref{Eq:3.8}.
Secondly, bounding the term $(f^{h},g^{h},k^{h})$ for example, $k_1(a,\theta^h)$ is more elaborate due to the less regularity of $\theta$, where different Sobolev embeddings are mainly employed. See Lemmas \ref{Lem3.2}-\ref{Lem3.3} for more details.

On the other hand, we proceed differently for the analysis of the the high frequencies decay of the solution, since there is no smooth effect for $a$. Indeed,
the idea is to work with a so-called ``\textit{effective velocity}" $w$ (which was initiated by Hoff \cite{HD} and first used in the context of critical regularity by Haspot \cite{HB}) such that, up to low order terms, the divergence-free part of $v$, the temperature $\theta$ and $w$ satisfy a parabolic system while $a$ fulfills a damped transport equation. Then, by employing $L^p$ energy argument directly on these equations after localization, one can eventually obtain optimal decay exponents for high frequencies.

The rest of the paper unfolds as follows. In Section 2, we recall Littlewood-Paley decomposition, Besov spaces and related analysis tools. Section 3 is devoted to the proofs of Theorem \ref{Thm1.1} and Corollary \ref{Cor1.1}.
\section{Preliminary}\setcounter{equation}{0}
Throughout the paper, $C>0$ stands for a generic ``constant''. For brevity, we write
$f\lesssim g$ instead of $f\leq Cg$. The notation $f\approx g$ means that $%
f\lesssim g$ and $g\lesssim f$. For any Banach space $X$ and $f,g\in X$, we agree that $\|(f,g)\|_{X}\triangleq \|f\| _{X}+\|g\|_{X}$. For all $T>0$ and $\rho \in[1,+\infty]$, we denote by $L_{T}^{\rho}(X) \triangleq L^{\rho}([0,T];X)$ the set of measurable functions $f:[0,T]\rightarrow X$ such that $t\mapsto\|f(t)\|_{X}$ is in $L^{\rho}(0,T)$.

Let us next briefly recall Littlewood-Paley decomposition, Besov spaces and analysis tools. The interested reader is referred to Chap. 2 and Chap. 3 of \cite{BCD} for more details. We begin with the homogeneous Littlewood-Paley decomposition. To this end,  we fix some smooth
radial non increasing function $\chi $ with $\mathrm{Supp}\,\chi \subset
B\left(0,\frac {4}{3}\right)$ and $\chi \equiv 1$ on $B\left(0,\frac
{3}{4}\right)$, then set $\varphi (\xi) =\chi (\xi/2)-\chi (\xi)$
so that
$$\sum_{j\in \mathbb{Z}}\varphi ( 2^{-j}\cdot ) =1\ \ \hbox{in}\ \
\mathbb{R}^{d}\setminus \{ 0\} \ \ \hbox{and}\ \ \mathrm{Supp}\,\varphi \subset \left\{ \xi \in \mathbb{R}^{d}:3/4\leq |\xi|\leq 8/3\right\} .$$
The homogeneous dyadic blocks $\dot{\Delta}_{j}$ are defined by
$$\dot{\Delta}_{j}f\triangleq \varphi (2^{-j}D)f=\mathcal{F}^{-1}\left(\varphi
(2^{-j}\cdot )\mathcal{F}f\right)=2^{jd}h(2^{j}\cdot )\star f\ \ \hbox{with}\ \
h\triangleq \mathcal{F}^{-1}\varphi.$$
Formally, we have the homogeneous decomposition as follows
\begin{equation} \label{Eq:2.1}
f=\sum_{j\in \mathbb{Z}}\dot{\Delta}_{j}f,
\end{equation}
for any tempered distribution $f\in S'(\mathbb{R}^{d})$. As it holds only modulo polynomials,
it is convenient to consider the subspace of those tempered distributions $f$ such that
\begin{equation}\label{Eq:2.2}
\lim_{j\rightarrow -\infty }\| \dot{S}_{j}f\| _{L^{\infty} }=0,
\end{equation}
where $\dot{S}_{j}f$ stands for the low frequency cut-off $\dot{S}_{j}f\triangleq\chi (2^{-j}D)f$. As a matter of fact,
if \eqref{Eq:2.2} is fulfilled, then \eqref{Eq:2.1} holds in $S'(\mathbb{R}^{d})$. For convenience, we denote by $S'_{0}(\mathbb{R}^{d})$ the subspace of tempered distributions satisfying \eqref{Eq:2.2}.

With the aid of the Littlewood-Paley decomposition, the homogeneous Besov space is defined as follows.
\begin{defn}\label{Defn2.1}
	For $\sigma\in \mathbb{R}$ and $1\leq p,r\leq\infty,$ the homogeneous
	Besov spaces $\dot{B}^{\sigma}_{p,r}$ is defined by
	$$\dot{B}^{\sigma}_{p,r}\triangleq\left\{f\in S'_{0}:\|f\|_{\dot{B}^{\sigma}_{p,r}}<+\infty\right\},$$
	where
	\begin{equation}\label{Eq:2.3}
	\|f\|_{\dot B^{\sigma}_{p,r}}\triangleq \|(2^{j\sigma}\|\dot{\Delta}_{j}  f\|_{L^p})\|_{\ell^{r}(\mathbb{Z})}.
	\end{equation}
\end{defn}
In many parts of this paper, we use the following classical properties (see \cite{BCD}):

$\bullet$ \ \emph{Scaling invariance:} For any $\sigma\in \mathbb{R}$ and $(p,r)\in
[1,\infty ]^{2}$, there exists a constant $C=C(\sigma,p,r,d)$ such that for all $\lambda >0$ and $f\in \dot{B}_{p,r}^{\sigma}$, we have
$$
C^{-1}\lambda ^{\sigma-\frac {d}{p}}\|f\|_{\dot{B}_{p,r}^{\sigma}}
\leq \|f(\lambda \cdot)\|_{\dot{B}_{p,r}^{\sigma}}\leq C\lambda ^{\sigma-\frac {d}{p}}\|f\|_{\dot{B}_{p,r}^{\sigma}}.
$$

$\bullet$ \ \emph{Completeness:} $\dot{B}^{\sigma}_{p,r}$ is a Banach space whenever $%
\sigma<\frac{d}{p}$ or $\sigma\leq \frac{d}{p}$ and $r=1$.

$\bullet$ \ \emph{Interpolation:} The following inequality is satisfied for $1\leq p,r_{1},r_{2}, r\leq \infty, \sigma_{1}\neq \sigma_{2}$ and $\theta_{1} \in (0,1)$:
$$\|f\|_{\dot{B}_{p,r}^{\theta_{1} \sigma_{1}+(1-\theta_{1})\sigma_{2}}}\lesssim \|f\| _{\dot{B}_{p,r_{1}}^{\sigma_{1}}}^{\theta_{1}} \|f\|_{\dot{B}_{p,r_2}^{\sigma_{2}}}^{1-\theta_{1} }$$
with $\frac{1}{r}=\frac{\theta_{1}}{r_{1}}+\frac{1-\theta_{1}}{r_{2}}$.

$\bullet$ \ \emph{Action of Fourier multipliers:} If $F$ is a smooth homogeneous of
degree $m$ function on $\mathbb{R}^{d}\backslash \{0\}$ then
$$F(D):\dot{B}_{p,r}^{\sigma}\rightarrow \dot{B}_{p,r}^{\sigma-m}.$$

The following embedding properties are used several times in this paper.
\begin{prop} \label{Prop2.1} (Embedding for Besov spaces on $\mathbb{R}^{d}$)
	\begin{itemize}
		\item For any $p\in[1,\infty]$ we have the  continuous embedding
		$\dot {B}^{0}_{p,1}\hookrightarrow L^{p}\hookrightarrow \dot {B}^{0}_{p,\infty}.$
		\item If $\sigma\in\mathbb{R}$, $1\leq p_{1}\leq p_{2}\leq\infty$ and $1\leq r_{1}\leq r_{2}\leq\infty,$
		then $\dot {B}^{\sigma}_{p_1,r_1}\hookrightarrow
		\dot {B}^{\sigma-d\,(\frac{1}{p_{1}}-\frac{1}{p_{2}})}_{p_{2},r_{2}}$.
		\item The space  $\dot {B}^{\frac {d}{p}}_{p,1}$ is continuously embedded in the set  of
		bounded  continuous functions (going to zero at infinity if, additionally, $p<\infty$).
	\end{itemize}
\end{prop}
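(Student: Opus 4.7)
The plan is to treat the three parts in order, all resting on the homogeneous Littlewood-Paley decomposition \eqref{Eq:2.1} together with Bernstein's inequality for spectrally localized functions, namely
\[
\|\dot{\Delta}_{j}f\|_{L^{q}}\lesssim 2^{jd(\frac{1}{p}-\frac{1}{q})}\|\dot{\Delta}_{j}f\|_{L^{p}}\qquad(1\leq p\leq q\leq\infty),
\]
which follows from $\dot{\Delta}_{j}f=2^{jd}h(2^{j}\cdot)\star\dot{\Delta}_{j}f$ (up to enlarging the Fourier support of $h$ slightly) and Young's convolution inequality.

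For the first item, the embedding $\dot{B}^{0}_{p,1}\hookrightarrow L^{p}$ is obtained by observing that \eqref{Eq:2.1} converges in $S'_{0}$, applying the $L^{p}$ triangle inequality termwise, and concluding $\|f\|_{L^{p}}\leq\sum_{j}\|\dot{\Delta}_{j}f\|_{L^{p}}=\|f\|_{\dot{B}^{0}_{p,1}}$. Conversely, $L^{p}\hookrightarrow\dot{B}^{0}_{p,\infty}$ is a direct consequence of Young's inequality applied to $\dot{\Delta}_{j}f=2^{jd}h(2^{j}\cdot)\star f$, which gives $\|\dot{\Delta}_{j}f\|_{L^{p}}\leq\|h\|_{L^{1}}\|f\|_{L^{p}}$ uniformly in $j$.

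For the second item, I split the two indices. The dependence on the second index is trivial: since $\ell^{r_{1}}(\mathbb{Z})\hookrightarrow\ell^{r_{2}}(\mathbb{Z})$ when $r_{1}\leq r_{2}$, taking the sequence $(2^{j\sigma}\|\dot{\Delta}_{j}f\|_{L^{p}})_{j}$ yields $\|f\|_{\dot{B}^{\sigma}_{p,r_{2}}}\leq\|f\|_{\dot{B}^{\sigma}_{p,r_{1}}}$. For the first index, Bernstein's inequality above (with $p=p_{1}$, $q=p_{2}$) gives
\[
2^{j(\sigma-d(\frac{1}{p_{1}}-\frac{1}{p_{2}}))}\|\dot{\Delta}_{j}f\|_{L^{p_{2}}}\lesssim 2^{j\sigma}\|\dot{\Delta}_{j}f\|_{L^{p_{1}}},
\]
and taking the $\ell^{r}(\mathbb{Z})$ norm of both sides produces the claimed inequality. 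Composing the two steps finishes the case $r_{1}\leq r_{2}$, $p_{1}\leq p_{2}$.

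For the third item, I would combine Bernstein with Part (1): for $f\in\dot{B}^{d/p}_{p,1}$,
\[
\sum_{j\in\mathbb{Z}}\|\dot{\Delta}_{j}f\|_{L^{\infty}}\lesssim\sum_{j\in\mathbb{Z}}2^{jd/p}\|\dot{\Delta}_{j}f\|_{L^{p}}=\|f\|_{\dot{B}^{d/p}_{p,1}},
\]
so the series \eqref{Eq:2.1} converges normally in $L^{\infty}$; since each $\dot{\Delta}_{j}f$ is continuous (its Fourier transform has compact support, so it is smooth), the limit is continuous and bounded. The subtler point will be the decay at infinity when $p<\infty$: each $\dot{\Delta}_{j}f=2^{jd}h(2^{j}\cdot)\star f$ with $h\in\mathcal{S}$ and $f\in L^{p}$ for $p<\infty$ (granted by Part (1) applied at index $\sigma=0$ after lossy reindexing, or directly since $\dot{B}^{d/p}_{p,1}\hookrightarrow\dot{B}^{0}_{p,1}\hookrightarrow L^{p}$ for $p<\infty$ isn't automatic — instead I would argue that $h\star g$ vanishes at infinity whenever $g\in L^{p}$, $p<\infty$, by standard mollification), hence tends to zero at infinity, and the uniform convergence of the series transfers this property to $f$. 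This last decay-at-infinity argument is the only mildly delicate step; the rest is bookkeeping with Bernstein.
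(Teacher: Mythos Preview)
The paper does not supply a proof of this proposition: it is stated as a classical fact with a pointer to \cite{BCD} (Chapters 2--3), so there is no argument in the paper to compare against. Your outline is the standard one and is essentially correct; it is precisely what one finds in \cite{BCD}.

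One small wrinkle worth tightening in Part~(3): you write $\dot{\Delta}_{j}f=2^{jd}h(2^{j}\cdot)\star f$ and then want to invoke ``$h\star g\to0$ at infinity for $g\in L^{p}$, $p<\infty$'', but $f\in\dot{B}^{d/p}_{p,1}$ need not lie in $L^{p}$ (and you correctly flag that $\dot{B}^{d/p}_{p,1}\hookrightarrow L^{p}$ is false in general). The clean fix is to use instead that each block $\dot{\Delta}_{j}f$ itself belongs to $L^{p}$ (this is immediate from the finiteness of the Besov norm) and is band-limited, hence can be written as $\dot{\Delta}_{j}f=\widetilde{\psi}_{j}\star\dot{\Delta}_{j}f$ for a suitable Schwartz function $\widetilde{\psi}_{j}$ with $\widehat{\widetilde{\psi}_{j}}\equiv1$ on the support of $\varphi(2^{-j}\cdot)$. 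Your density-in-$L^{p}$ argument then applies verbatim to conclude $\dot{\Delta}_{j}f\in C_{0}(\mathbb{R}^{d})$, and uniform convergence of the series finishes the job.
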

Let us mention the following product estimate in the Besov spaces, which plays a fundamental role in bounding bilinear terms of \eqref{Eq:1.8} (see \cite{BCD,DX1}).
\begin{prop}\label{Prop2.2}
	Let $\sigma>0$ and $1\leq p,r\leq\infty$. Then $\dot{B}^{\sigma}_{p,r}\cap L^{\infty}$ is an algebra and
	$$
	\|fg\|_{\dot{B}^{\sigma}_{p,r}}\lesssim \left\|f\right\|_{L^{\infty}}\|g\|_{\dot{B}^{\sigma}_{p,r}}+\|g\|_{L^{\infty}}\|f\|_{\dot{B}^{\sigma}_{p,r}}.
	$$
	Let the real numbers $\sigma_{1},$ $\sigma_{2},$ $p_{1}$  and $p_{2}$ fulfill
	$$
	\sigma_{1}+\sigma_{2}>0,\quad \sigma_{1}\leq\frac {d}{p_{1}},\quad\sigma_{2}\leq\frac {d}{p_{2}},\quad
	\sigma_{1}\geq\sigma_{2},\quad\frac{1}{p_{1}}+\frac{1}{p_{2}}\leq1.
	$$
	Then we have
	$$\|fg\|_{\dot{B}^{\sigma_{2}}_{q,1}}\lesssim \|f\|_{\dot{B}^{\sigma_{1}}_{p_{1},1}}\|g\|_{\dot{B}^{\sigma_{2}}_{p_{2},1}}\quad\hbox{with}\quad
	\frac1{q}=\frac1{p_{1}}+\frac1{p_{2}}-\frac{\sigma_{1}}{d}.$$
	Additionally, for exponents $\sigma>0$ and $1\leq p_{1},p_{2},q\leq\infty$ satisfying
	$$\frac{d}{p_{1}}+\frac{d}{p_{2}}-d\leq \sigma \leq\min\left(\frac {d}{p_{1}},\frac {d}{p_{2}}\right)\quad\hbox{and}\quad \frac{1}{q}=\frac {1}{p_{1}}+\frac {1}{p_{2}}-\frac{\sigma}{d},$$
	we have
	$$\|fg\|_{\dot{B}^{-\sigma}_{q,\infty}}\lesssim\|f\|_{\dot{B}^{\sigma}_{p_{1},1}}\|g\|_{\dot{B}^{-\sigma}_{p_{2},\infty}}.$$
\end{prop}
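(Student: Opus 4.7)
The plan is to prove all three inequalities via \emph{Bony's paraproduct decomposition}
\[
fg = T_f g + T_g f + R(f,g), \quad T_f g \triangleq \sum_{j\in\Z}\dot{S}_{j-1}f\,\dot{\Delta}_j g, \quad R(f,g) \triangleq \sum_{j\in\Z}\dot{\Delta}_j f\,\widetilde{\dot{\Delta}}_j g,
\]
with $\widetilde{\dot{\Delta}}_j \triangleq \dot{\Delta}_{j-1}+\dot{\Delta}_j+\dot{\Delta}_{j+1}$. The point of the splitting is that each summand $\dot{S}_{j-1}f\,\dot{\Delta}_j g$ has Fourier support in an annulus of size $2^j$, so $\dot{\Delta}_k(T_f g)$ is a finite sum over $|j-k|\leq N_0$, whereas each $\dot{\Delta}_j f\,\widetilde{\dot{\Delta}}_j g$ has Fourier support in a ball of radius $2^j$, so $\dot{\Delta}_k R(f,g)$ involves only indices $j\geq k-N_0$. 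For each of the three pieces I would apply H\"older in $x$ together with Bernstein to transfer integrability exponents, then take the appropriate $\ell^r$-norm in $k$.

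For the algebra inequality, $\|\dot{S}_{j-1}f\|_{L^\infty}\leq \|f\|_{L^\infty}$ yields $\|\dot{\Delta}_k(T_f g)\|_{L^p}\lesssim \|f\|_{L^\infty}\sum_{|j-k|\leq N_0}\|\dot{\Delta}_j g\|_{L^p}$; multiplying by $2^{k\sigma}$ and taking $\ell^r$ produces the contribution $\|f\|_{L^\infty}\|g\|_{\dot{B}^\sigma_{p,r}}$ with no sign constraint on $\sigma$, and $T_g f$ is handled by symmetry. For the remainder,
\[
2^{k\sigma}\|\dot{\Delta}_k R(f,g)\|_{L^p} \lesssim \sum_{j\geq k-N_0}2^{(k-j)\sigma}\,\|\dot{\Delta}_j f\|_{L^\infty}\cdot 2^{j\sigma}\|\widetilde{\dot{\Delta}}_j g\|_{L^p},
\]
so the convolution kernel $2^{(k-j)\sigma}\mathbf{1}_{j\geq k-N_0}$ belongs to $\ell^1(\Z)$ precisely when $\sigma>0$, and Young's inequality in $j$ closes the bound. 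The second statement follows from the same three-piece scheme, but with H\"older in $x$ producing $L^q$ via $1/q=1/p_1+1/p_2-\sigma_1/d$ combined with Bernstein to move the $L^{p_i}$-norms onto the scale of $L^q$. The hypotheses $\sigma_i\leq d/p_i$ keep the Bernstein exponents nonnegative, $\sigma_1\geq\sigma_2$ designates which paraproduct absorbs the $\dot{B}^{\sigma_2}_{p_2,1}$ factor, and $\sigma_1+\sigma_2>0$ makes the remainder absolutely summable.

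The third estimate, which targets $\dot{B}^{-\sigma}_{q,\infty}$, is where I expect the main obstacle. In $T_f g$, raising $L^{p_1}$ to $L^{p_1^\ast}$ via $1/p_1^\ast=1/p_1-\sigma/d$ (legal since $\sigma\leq d/p_1$) and applying Bernstein gives $\|\dot{S}_{j-1}f\|_{L^{p_1^\ast}}\lesssim \sum_{j'<j}2^{j'\sigma}\|\dot{\Delta}_{j'}f\|_{L^{p_1}}\leq \|f\|_{\dot{B}^\sigma_{p_1,1}}$, and H\"older combined with $2^{-k\sigma}\|\dot{\Delta}_k g\|_{L^{p_2}}\leq \|g\|_{\dot{B}^{-\sigma}_{p_2,\infty}}$ yields the desired $\ell^\infty_k$-bound. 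The symmetric paraproduct $T_g f$ is more delicate, because only $\|\dot{\Delta}_{j'}g\|_{L^{p_2}}\leq 2^{j'\sigma}\|g\|_{\dot{B}^{-\sigma}_{p_2,\infty}}$ is available; summing $2^{2j'\sigma}$ over $j'<j$ produces a factor $2^{2j\sigma}$, after which one still closes in $\ell^\infty_k$ using the $\ell^1$-summability of $(2^{k\sigma}\|\dot{\Delta}_k f\|_{L^{p_1}})_k$. The remainder is controlled by Bernstein on $\dot{\Delta}_k(\dot{\Delta}_j f\,\widetilde{\dot{\Delta}}_j g)$: with $1/s=1/p_1+1/p_2$ one picks up the factor $2^{kd(1/s-1/q)}=2^{k\sigma}$ that cancels $2^{-k\sigma}$, reducing the bound to $\sum_{j\geq k-N_0}\|\dot{\Delta}_j f\|_{L^{p_1}}\|\widetilde{\dot{\Delta}}_j g\|_{L^{p_2}}$, which after the splitting $1=2^{j\sigma}\cdot 2^{-j\sigma}$ furnishes $\|f\|_{\dot{B}^\sigma_{p_1,1}}\|g\|_{\dot{B}^{-\sigma}_{p_2,\infty}}$. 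The chief subtlety is verifying that the lower bound $\sigma\geq d/p_1+d/p_2-d$ is precisely what forces $q\geq 1$ (so that $L^q$ is admissible) and simultaneously ensures the Bernstein exponents in each piece are nonnegative; once this alignment is recognized, the remaining work is bookkeeping.
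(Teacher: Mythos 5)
Your overall route -- Bony's decomposition, blockwise H\"older plus Bernstein, then $\ell^r$ summation -- is exactly the argument of the references \cite{BCD,DX1} to which the paper delegates this proposition (the paper gives no proof of its own), and your treatment of the first inequality, of the second one, and of both paraproducts $T_fg$ and $T_gf$ in the third one is correct. The genuine gap is in the remainder term of the third estimate. There you apply H\"older to put $\dot{\Delta}_jf\,\widetilde{\dot{\Delta}}_jg$ in $L^{s}$ with $\frac1s=\frac1{p_1}+\frac1{p_2}$ and then Bernstein on the annulus-localized block $\dot{\Delta}_k(\cdot)$ to produce the factor $2^{kd(\frac1s-\frac1q)}=2^{k\sigma}$. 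This chain requires $s\geq1$: Bernstein as stated in \eqref{Eq:2.6} is for exponents in $[1,\infty]$, and, more seriously, the intermediate step $\|\dot{\Delta}_k(uv)\|_{L^{s}}\lesssim\|uv\|_{L^{s}}$ (Young's inequality for the convolution with $2^{kd}h(2^k\cdot)$) fails when $s<1$. Your closing sentence attributes the admissibility of all exponents to the lower bound $\sigma\geq \frac d{p_1}+\frac d{p_2}-d$, but that condition only guarantees $q\geq1$; it does not give $\frac1{p_1}+\frac1{p_2}\leq1$, which is what your remainder argument actually uses and which the stated hypotheses allow to fail (e.g. $p_1=p_2=1$, $\sigma=d$, $q=1$ is admissible as written). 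Trying to gain the factor at the input scale instead (Bernstein at scale $2^j$ on one factor) does not rescue the argument: it produces an unsummable weight $2^{(j-k)\beta}$ with $\beta=\frac d{p_1}+\frac d{p_2}-d>0$.

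Moreover this is not merely a presentational gap: in the range $\frac1{p_1}+\frac1{p_2}>1$ the inequality itself is false, so no remainder estimate can close it. Take $\widehat\psi=\widehat\phi$ a nonnegative even bump supported in a small ball, so that $\int\psi\phi\,dx>0$, and set $f_J=e^{-i2^Jx_1}\psi$, $g_J=e^{i2^Jx_1}\phi$. Then only blocks with $j$ near $J$ are active, so $\|f_J\|_{\dot B^{\sigma}_{p_1,1}}\approx2^{J\sigma}\|\psi\|_{L^{p_1}}$ and $\|g_J\|_{\dot B^{-\sigma}_{p_2,\infty}}\approx2^{-J\sigma}\|\phi\|_{L^{p_2}}$, so the right-hand side stays bounded in $J$; but $f_Jg_J=\psi\phi$ is a fixed Schwartz function with nonzero mean, for which $\|\dot{\Delta}_j(\psi\phi)\|_{L^q}\approx|\int\psi\phi|\,2^{jd(1-\frac1q)}$ as $j\to-\infty$, whence $2^{-j\sigma}\|\dot{\Delta}_j(\psi\phi)\|_{L^q}\approx2^{j(d-\frac d{p_1}-\frac d{p_2})}\to\infty$ precisely when $\frac1{p_1}+\frac1{p_2}>1$. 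The correct statement therefore carries the extra hypothesis $\frac1{p_1}+\frac1{p_2}\leq1$ (under which your remainder computation is the standard one and goes through verbatim, and the lower bound on $\sigma$ becomes automatic); this is harmless for the paper, where the estimate is only invoked with $p_1,p_2\in\{2,p\}$ and $p\geq2$, but as written the final step of your proposal would fail.
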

Proposition \ref{Prop2.2} are not enough to bound all nonlinear terms in the proof of Theorem \ref{Thm1.1}, so we need to the following non standard product estimates (see \cite{DX1,XX}).
\begin{prop}\label{Prop2.3}
	Let the real numbers $\sigma_{1},$ $\sigma_{2},$ $p_{1}$  and $p_{2}$ be such that
	$$
	\sigma_{1}+\sigma_{2}\geq0,\quad \sigma_{1}\leq\frac {d}{p_{1}},\quad\sigma_{2}< \min\left(\frac {d}{p_{1}},\frac {d}{p_{2}}\right)\quad \hbox{and} \quad\frac{1}{p_{1}}+\frac{1}{p_{2}}\leq1.
	$$
	Then it holds that
	$$\|fg\|_{\dot{B}^{\sigma_{1}+\sigma_{2}-\frac{d}{p_{1}}}_{p_{2},\infty}}\lesssim \|f\|_{\dot{B}^{\sigma_{1}}_{p_{1},1}}\left\|g\right\|_{\dot{B}^{\sigma_{2}}_{p_{2},\infty}}.$$
\end{prop}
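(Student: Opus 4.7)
The plan is to apply Bony's paraproduct decomposition
\[
fg = T_{f}g + T_{g}f + R(f,g),
\]
with $T_{u}v\triangleq\sum_{j}\dot{S}_{j-1}u\,\dot{\Delta}_{j}v$ and $R(f,g)\triangleq\sum_{j}\dot{\Delta}_{j}f\,\wt{\dot{\Delta}}_{j}g$, where $\wt{\dot{\Delta}}_{j}\triangleq\dot{\Delta}_{j-1}+\dot{\Delta}_{j}+\dot{\Delta}_{j+1}$, and to bound each piece separately in the target space $\dot{B}^{\sigma_{1}+\sigma_{2}-d/p_{1}}_{p_{2},\infty}$. The three pieces exploit different subsets of the four hypotheses, so the strategy is to tailor a Bernstein/H\"older pairing to the spectral geometry of each piece.

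For $T_{f}g$, I would transfer $f$ into $L^{\infty}$ via Bernstein, using the hypothesis $\sigma_{1}\leq d/p_{1}$ together with the $\ell^{1}$ index of $f$'s Besov space, to obtain
\[
\|\dot{S}_{j-1}f\|_{L^{\infty}}\lesssim 2^{j(d/p_{1}-\sigma_{1})}\|f\|_{\dot{B}^{\sigma_{1}}_{p_{1},1}}
\]
(the endpoint $\sigma_{1}=d/p_{1}$ being covered by the embedding $\dot{B}^{d/p_{1}}_{p_{1},1}\hookrightarrow L^{\infty}$ from Proposition \ref{Prop2.1}). Pairing this with $\|\dot{\Delta}_{j'}g\|_{L^{p_{2}}}\leq 2^{-j'\sigma_{2}}\|g\|_{\dot{B}^{\sigma_{2}}_{p_{2},\infty}}$ and the spectral almost-orthogonality $|k-j'|\leq N_{0}$ of paraproduct blocks, multiplying by $2^{k(\sigma_{1}+\sigma_{2}-d/p_{1})}$ and passing to the $\ell^{\infty}_{k}$ supremum gives the desired bound. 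For $T_{g}f$, the Lebesgue exponent of the product has to be arranged more carefully: when $p_{1}\leq p_{2}$, one puts $\dot{S}_{j-1}g$ in $L^{\infty}$ via the strict bound $\sigma_{2}<d/p_{2}$ and then applies Bernstein upward to pass from $L^{p_{1}}$ to $L^{p_{2}}$; when $p_{1}>p_{2}$, one instead chooses $\dot{S}_{j-1}g\in L^{s}$ with $1/s=1/p_{2}-1/p_{1}$ (so that H\"older with $\dot{\Delta}_{j}f\in L^{p_{1}}$ lands directly in $L^{p_{2}}$), and the dyadic sum defining $\|\dot{S}_{j-1}g\|_{L^{s}}$ converges precisely because $\sigma_{2}<d/p_{1}$. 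In either case the $\ell^{1}$ summability of the sequence coming from $\|f\|_{\dot{B}^{\sigma_{1}}_{p_{1},1}}$ closes the estimate.

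The remainder $R(f,g)$ is where the hypothesis $\sigma_{1}+\sigma_{2}\geq 0$ plays its decisive role, and I expect this to be the most delicate step. Since $\dot{\Delta}_{j}f\,\wt{\dot{\Delta}}_{j}g$ is spectrally supported in a ball of radius $\sim 2^{j}$, only summands with $j\geq k-N_{0}$ contribute to $\dot{\Delta}_{k}R(f,g)$. H\"older in $L^{r}$ with $1/r=1/p_{1}+1/p_{2}\leq 1$ (so that $r\leq p_{2}$), followed by a Bernstein upgrade to $L^{p_{2}}$, yields
\[
\|\dot{\Delta}_{k}R(f,g)\|_{L^{p_{2}}}\lesssim 2^{kd/p_{1}}\sum_{j\geq k-N_{0}}2^{-j(\sigma_{1}+\sigma_{2})}c_{j}\,\|g\|_{\dot{B}^{\sigma_{2}}_{p_{2},\infty}},
\]
with $(c_{j})\in\ell^{1}$ satisfying $\sum_{j}c_{j}=\|f\|_{\dot{B}^{\sigma_{1}}_{p_{1},1}}$. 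Multiplying through by $2^{k(\sigma_{1}+\sigma_{2}-d/p_{1})}$ converts the right-hand side into the discrete convolution $\sum_{j\geq k-N_{0}}2^{(k-j)(\sigma_{1}+\sigma_{2})}c_{j}$, which is bounded in $\ell^{\infty}_{k}$ by $\|c\|_{\ell^{1}}$ precisely when the weight $2^{(k-j)(\sigma_{1}+\sigma_{2})}$ stays bounded on the admissible range $k-j\leq N_{0}$, i.e.\ when $\sigma_{1}+\sigma_{2}\geq 0$. This sharp threshold is the reason for the endpoint hypothesis in the statement and is what distinguishes Proposition \ref{Prop2.3} from the more standard product estimates already gathered in Proposition \ref{Prop2.2}.
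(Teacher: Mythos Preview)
The paper does not give a proof of Proposition \ref{Prop2.3}; it merely cites \cite{DX1,XX} for the result. Your Bony-decomposition argument is the standard route for such product estimates and is essentially what one finds in those references, so your proposal is correct and aligned with the intended proof. The case split $p_{1}\leq p_{2}$ versus $p_{1}>p_{2}$ in your treatment of $T_{g}f$ is a nice touch that makes transparent why \emph{both} strict inequalities $\sigma_{2}<d/p_{1}$ and $\sigma_{2}<d/p_{2}$ are needed (the geometric series defining $\|\dot S_{j-1}g\|$ must converge, and the relevant exponent depends on which Lebesgue space one targets), and your remark that the remainder term is exactly where the endpoint condition $\sigma_{1}+\sigma_{2}\geq 0$ enters is correct and worth keeping.
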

\begin{prop}\label{Prop2.4}
	Let $j_{0}\in\mathbb{Z}$, and denote $z^{\ell}\triangleq\dot S_{j_{0}}z$, $z^{h}\triangleq z-z^{\ell}$ and, for any $\sigma\in\mathbb{R}$,
	$$
	\|z\|_{\dot B^{\sigma}_{2,\infty}}^{\ell}\triangleq\sup_{j\leq j_{0}}2^{j\sigma}\|\dot{\Delta}_{j} z\|_{L^{2}}.
	$$
	There exists a universal integer $N_{0}$ such that  for any $2\leq p\leq 4$ and $\sigma>0,$ we have
	\begin{eqnarray}\label{Eq:2.4}
	&&\|fg^{h}\|_{\dot {B}^{-s_{0}}_{2,\infty}}^{\ell}\leq C \left(\|f\|_{\dot {B}^{\sigma}_{p,1}}+\|\dot {S}_{j_{0}+N_{0}}f\|_{L^{{p}^{*}}}\right)\|g^{h}\|_{\dot{B}^{-\sigma}_{p,\infty}},\\
	\label{Eq:2.5}
	&&\|f^{h}g\|_{\dot {B}^{-s_{0}}_{2,\infty}}^{\ell}
	\leq C \left(\|f^{h}\|_{\dot{B}^{\sigma}_{p,1}}+\|\dot{S}_{j_{0}+N_{0}}f^{h}\|_{L^{p^{*}}}\right)\|g\|_{\dot {B}^{-\sigma}_{p,\infty}}
	\end{eqnarray}
	with $s_{0}\triangleq \frac{2d}{p}-\frac {d}{2}$ and $\frac1{p^{*}}\triangleq\frac{1}{2}-\frac{1}{p}$,
	and $C$ depending only on $j_{0}$, $d$ and $\sigma$.
\end{prop}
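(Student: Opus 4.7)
The plan is to use Bony's paraproduct decomposition
\begin{equation*}
fg^h = T_f g^h + T_{g^h} f + R(f,g^h),
\end{equation*}
where $T_u v \triangleq \sum_k \dot{S}_{k-1} u \, \dot{\Delta}_k v$ and $R(u,v) \triangleq \sum_k \dot{\Delta}_k u \, \widetilde{\dot{\Delta}}_k v$, and to estimate each piece separately. The essential structural input is that the Fourier support of $g^h = (\Id - \dot{S}_{j_0}) g$ is contained in $\{|\xi| \gtrsim 2^{j_0}\}$, so there exists a universal integer $N_0$, depending only on the Littlewood--Paley partition, such that both $\dot{\Delta}_k g^h$ and $\dot{S}_{k-1} g^h$ vanish whenever $k \le j_0 - N_0$. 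Coupled with the standard almost-orthogonality of paraproducts ($\dot{\Delta}_j(\dot{S}_{k-1} u\, \dot{\Delta}_k v) = 0$ unless $|k - j| \le N_1$ for a universal $N_1$), this confines the indices $k$ contributing to $\dot{\Delta}_j(fg^h)$ with $j \le j_0$ to a finite window centred on $j_0$.

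For $T_f g^h$ I would apply H\"older's inequality $\|uv\|_{L^2} \le \|u\|_{L^{p^*}} \|v\|_{L^p}$, licit since $p \ge 2$. Choosing $N_0$ large enough that $\dot{S}_{k-1} = \dot{S}_{k-1} \dot{S}_{j_0+N_0}$ throughout the relevant window yields $\|\dot{S}_{k-1} f\|_{L^{p^*}} \lesssim \|\dot{S}_{j_0+N_0} f\|_{L^{p^*}}$, and then $\|\dot{\Delta}_k g^h\|_{L^p} \le 2^{k\sigma} \|g^h\|_{\dot{B}^{-\sigma}_{p,\infty}}$ produces the contribution $\|\dot{S}_{j_0+N_0} f\|_{L^{p^*}} \|g^h\|_{\dot{B}^{-\sigma}_{p,\infty}}$; the leftover factor $2^{j(\sigma - s_0)}$ stays bounded because $j$ ranges over a finite interval. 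For $T_{g^h} f$ one estimates $\|\dot{S}_{k-1} g^h\|_{L^p} \lesssim 2^{k\sigma} \|g^h\|_{\dot{B}^{-\sigma}_{p,\infty}}$ by geometric summation (this is where $\sigma > 0$ enters) and then applies the Bernstein inequality $\|\dot{\Delta}_k f\|_{L^{p^*}} \lesssim 2^{ks_0} \|\dot{\Delta}_k f\|_{L^p}$; the powers $2^{\pm j s_0}$ then cancel exactly, leaving $\|f\|_{\dot{B}^\sigma_{p,1}}\|g^h\|_{\dot{B}^{-\sigma}_{p,\infty}}$. The remainder $R(f, g^h)$ is the most delicate piece: the natural H\"older estimate only controls $\|\dot{\Delta}_k f \, \widetilde{\dot{\Delta}}_k g^h\|_{L^{p/2}} \le \|\dot{\Delta}_k f\|_{L^p} \|\widetilde{\dot{\Delta}}_k g^h\|_{L^p}$, so I would invoke Bernstein once more in the form $\|\dot{\Delta}_j u\|_{L^2} \lesssim 2^{js_0} \|\dot{\Delta}_j u\|_{L^{p/2}}$. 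This is valid precisely because $p/2 \le 2$, which is exactly the role of the hypothesis $p \le 4$; the $\ell^1$-summability built into $\|f\|_{\dot{B}^\sigma_{p,1}}$ then closes the $k$-sum.

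The second inequality is proved along the same lines after exchanging the roles of the two factors in the Bony decomposition: now it is $f^h$ whose Fourier support is pinned from below, so the finite-range confinement comes from $\dot{\Delta}_k f^h$ and $\dot{S}_{k-1} f^h$, the symmetric and remainder terms yield the $\|f^h\|_{\dot{B}^\sigma_{p,1}}\|g\|_{\dot{B}^{-\sigma}_{p,\infty}}$ contribution in exactly the same way, and the $\|\dot{S}_{j_0 + N_0} f^h\|_{L^{p^*}}$ term arises from the low-high paraproduct $T_{f^h} g$. The principal technical obstacle throughout is the careful bookkeeping of spectral supports that certifies which paraproduct sums collapse to a finite index window around $j_0$; this is what dictates the existence of the universal integer $N_0$ and accounts for the $j_0$-dependence of the final constant $C$.
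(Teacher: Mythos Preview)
Your approach is correct and is exactly the standard argument for this type of estimate. The paper itself does not supply a proof of Proposition~\ref{Prop2.4}; it simply quotes the result from \cite{DX1,XX}, so there is no in-paper proof to compare against. The Bony decomposition you outline, together with the three ingredients you isolate---(i) the spectral confinement coming from the high-frequency factor, which collapses the paraproduct sums to a finite window around $j_{0}$ and produces the $\dot S_{j_{0}+N_{0}}$ term, (ii) the geometric summation requiring $\sigma>0$, and (iii) the Bernstein step $L^{p/2}\hookrightarrow L^{2}$ for the remainder, which forces $p\leq4$---is precisely how the cited references establish the inequality. Your identification of where each hypothesis enters is accurate.
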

System \eqref{Eq:1.8} also involves compositions of functions (through $K_{1}(a)$, $K_{2}(a)$, $K_{3}(a)$ $\tilde{K}_{1}(a)$, $\tilde{K}_{2}(a)$, $\tilde{\mu}(a)$, $\tilde{\lambda}(a)$ and $\widetilde{\kappa}(a)$) that
are handled due to the following proposition.
\begin{prop}\label{Prop2.5}
	Let $F:\mathbb{R}\rightarrow\mathbb{R}$ be  smooth with $F(0)=0$.
	For  all  $1\leq p,r\leq\infty$ and $\sigma>0$, we have
	$F(f)\in \dot {B}^{\sigma}_{p,r}\cap L^{\infty}$  for  $f\in \dot {B}^{\sigma}_{p,r}\cap L^{\infty},$  and
	$$\|F(f)\|_{\dot {B}^{\sigma}_{p,r}}\leq C\|f\|_{\dot{B}^{\sigma}_{p,r}}$$
	with $C$ depending only on $\|f\|_{L^{\infty}}$, $F'$ (and higher derivatives), $\sigma$, $p$ and $d$.
	
	In the case $\sigma>-\min\left(\frac {d}{p},\frac {d}{p'}\right)$ then $f\in\dot {B}^{\sigma}_{p,r}\cap\dot {B}^{\frac {d}{p}}_{p,1}$
	implies that $F(f)\in \dot{B}^{\sigma}_{p,r}\cap \dot {B}^{\frac{d}{p}}_{p,1}$, and we have
	$$\|F(f)\|_{\dot B^{\sigma}_{p,r}}\leq C(1+\|f\|_{\dot {B}^{\frac {d}{p}}_{p,1}})\|f\|_{\dot {B}^{\sigma}_{p,r}}.$$
\end{prop}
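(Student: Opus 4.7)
The strategy blends Meyer's telescoping decomposition with Bony's paraproduct calculus. For the first estimate, the starting point is the convergent series
$$F(f) = \sum_{j \in \mathbb{Z}} \bigl[F(\dot{S}_{j+1} f) - F(\dot{S}_{j} f)\bigr],$$
valid in $S'(\mathbb{R}^{d})$ thanks to $F(0)=0$ and $\dot{S}_{j}f\to 0$ uniformly as $j\to-\infty$ for $f\in S'_{0}\cap L^{\infty}$. Applying the mean value theorem to each increment rewrites this as
$$F(f) = \sum_{j \in \mathbb{Z}} m_{j}\, \dot{\Delta}_{j} f, \qquad m_{j} \triangleq \int_{0}^{1} F'\bigl(\dot{S}_{j} f + t\,\dot{\Delta}_{j} f\bigr)\, dt,$$
with the uniform bound $\|m_{j}\|_{L^{\infty}}\leq\|F'\|_{L^{\infty}([-M,M])}$ where $M=\|f\|_{L^{\infty}}$. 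Localizing by $\dot{\Delta}_{k}$ and exploiting the Faà di Bruno formula together with Bernstein's inequality to control higher derivatives of $m_{j}$ in terms of $\|F^{(n)}\|_{L^{\infty}}$ and $\|f\|_{L^{\infty}}$, one derives that $\dot{\Delta}_{k}(m_{j}\dot{\Delta}_{j}f)$ decays rapidly when $|k-j|$ is large, which yields
$$\|\dot{\Delta}_{k}F(f)\|_{L^{p}} \lesssim \sum_{j\in\mathbb{Z}} c_{k-j}\, \|\dot{\Delta}_{j}f\|_{L^{p}}$$
with a fixed summable sequence $(c_{n})$. A convolution inequality in the weighted $\ell^{r}(\mathbb{Z})$ space with weight $(2^{k\sigma})$ then delivers the desired bound; the positivity of $\sigma$ is what makes the convolution admissible.

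For the second estimate, the hypothesis $f\in\dot{B}^{d/p}_{p,1}$ guarantees $f\in L^{\infty}$ via Proposition \ref{Prop2.1}, and the first part immediately yields $\|F(f)\|_{\dot{B}^{d/p}_{p,1}}\lesssim \|f\|_{\dot{B}^{d/p}_{p,1}}$. To reach negative values of $\sigma$, I would appeal to Bony's paralinearization by writing $F(f)=mf$ with
$$m\triangleq\int_{0}^{1}F'(\tau f)\, d\tau,$$
and then decomposing
$$F(f) = T_{m}f + T'_{f}m, \qquad T'_{f}m \triangleq T_{f}m + R(m,f).$$
The paraproduct term is handled by $\|T_{m}f\|_{\dot{B}^{\sigma}_{p,r}}\lesssim\|m\|_{L^{\infty}}\|f\|_{\dot{B}^{\sigma}_{p,r}}$ for any $\sigma\in\mathbb{R}$. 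Applying the first part of the proposition to the smooth auxiliary $G(x)\triangleq\int_{0}^{1}F'(\tau x)\, d\tau - F'(0)$, which vanishes at $0$, produces $\|m\|_{\dot{B}^{d/p}_{p,1}}\lesssim\|f\|_{\dot{B}^{d/p}_{p,1}}$. The classical continuity statement for $T'$ (a paraproduct-plus-remainder operator) then provides
$$\|T'_{f}m\|_{\dot{B}^{\sigma}_{p,r}} \lesssim \|m\|_{\dot{B}^{d/p}_{p,1}}\|f\|_{\dot{B}^{\sigma}_{p,r}}$$
provided $\sigma>-\min(d/p,d/p')$, which is precisely the hypothesis. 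Summing the two contributions closes the estimate.

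The main obstacle is the fine bookkeeping of $m_{j}$ in the first step: the Faà di Bruno expansion produces polynomial expressions in $\|\dot{S}_{j}f\|_{L^{\infty}}$ that must be absorbed into a constant depending only on $\|f\|_{L^{\infty}}$ and $F$, uniformly in $j$, and the rapid decay of off-diagonal contributions requires differentiating $m_{j}$ arbitrarily many times. In the second step, the sharp threshold $\sigma > -\min(d/p,d/p')$ is dictated by the continuity of the remainder operator $R(\cdot,\cdot)$; it also forces the use of $\dot{B}^{d/p}_{p,1}$ (rather than $\dot{B}^{d/p}_{p,\infty}$), so that $m$ is genuinely bounded and its Littlewood-Paley coefficients are summable as required by the paralinearization argument.
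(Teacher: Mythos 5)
Your two-step argument is exactly the classical proof that this paper implicitly relies on: Proposition \ref{Prop2.5} is stated without proof and quoted from the standard references (\cite{BCD} and Danchin's papers), where the first estimate is obtained from Meyer's telescoping series $F(f)=\sum_{j}m_{j}\dot{\Delta}_{j}f$ and the second from the first-order Taylor formula $F(f)=f\int_{0}^{1}F'(\tau f)\,d\tau$ combined with Bony's decomposition. Your first step is sound; just note that the rapid decay of $\dot{\Delta}_{k}(m_{j}\dot{\Delta}_{j}f)$ is only available for $j<k$ (each derivative costs a factor $2^{j}$, and one must differentiate more than $\sigma$ times), while for $j\geq k$ one only has the crude bound $\|m_{j}\|_{L^{\infty}}\|\dot{\Delta}_{j}f\|_{L^{p}}$ and the geometric factor $2^{(k-j)\sigma}$ comes purely from the weight --- which is indeed what you invoke when you say the positivity of $\sigma$ makes the convolution admissible. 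Your treatment of the constant $F'(0)$ through the auxiliary function $G$ is also the right way to stay inside $S'_{0}$.

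The one step that would fail as written is the appeal to a ``classical continuity statement'' $\|T'_{f}m\|_{\dot{B}^{\sigma}_{p,r}}\lesssim\|f\|_{\dot{B}^{\sigma}_{p,r}}\|m\|_{\dot{B}^{\frac{d}{p}}_{p,1}}$ for \emph{all} $\sigma>-\min(\frac{d}{p},\frac{d}{p'})$: this (like the underlying product law $\dot{B}^{\frac{d}{p}}_{p,1}\times\dot{B}^{\sigma}_{p,r}\to\dot{B}^{\sigma}_{p,r}$) also requires the upper restriction $\sigma\leq\frac{d}{p}$. Indeed $T_{f}m$ inherits only the regularity of the high-frequency factor $m$, so if $m$ is concentrated on a single block at frequency $2^{N}$ the ratio of the two sides grows like $2^{N(\sigma-\frac{d}{p})}$ and the inequality fails for $\sigma>\frac{d}{p}$. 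Since the proposition imposes no upper bound on $\sigma$, you should complete the argument by observing that in the remaining range $\sigma>\frac{d}{p}$ the desired bound follows directly from the first part together with the embedding $\dot{B}^{\frac{d}{p}}_{p,1}\hookrightarrow L^{\infty}$ (the constant then depends on $\|f\|_{L^{\infty}}$, exactly as it already does in the first estimate); the paralinearization step is needed, and valid, precisely for $-\min(\frac{d}{p},\frac{d}{p'})<\sigma\leq\frac{d}{p}$, which is also the only regime in which the paper applies the proposition.
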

In addition, we also notice the classical \emph{Bernstein inequality}:
\begin{equation}\label{Eq:2.6}
\|D^{k}f\|_{L^{b}}
\leq C^{1+k} \lambda^{k+d\,(\frac{1}{a}-\frac{1}{b})}\|f\|_{L^{a}}
\end{equation}
that holds for all function $f$ such that $\mathrm{Supp}\,\mathcal{F}f\subset\left\{\xi\in \mathbb{R}^{d}: |\xi|\leq R\lambda\right\}$ for some $R>0$
and $\lambda>0$, if $k\in\mathbb{N}$ and $1\leq a\leq b\leq\infty$.

More generally, if we suppose $f$ to satisfy $\mathrm{Supp}\,\mathcal{F}f\subset \left\{\xi\in \mathbb{R}^{d}:
R_{1}\lambda\leq\left|\xi\right|\leq R_{2}\lambda\right\}$ for some $0<R_{1}<R_{2}$  and $\lambda>0$,
then for any smooth  homogeneous of degree $m$ function $A$ on $\mathbb{R}^d\setminus\{0\}$ and $1\leq a\leq\infty,$ we get
(see e.g. Lemma 2.2 in \cite{BCD}):
\begin{equation}\label{Eq:2.7}
\|A(D)f\|_{L^{a}}\approx\lambda^{m}\|f\|_{L^{a}}.
\end{equation}
An obvious consequence of \eqref{Eq:2.6} and \eqref{Eq:2.7} is that
$\|D^{k}f\|_{\dot{B}^{s}_{p, r}}\thickapprox \|f\|_{\dot{B}^{s+k}_{p, r}}$ for all $k\in\mathbb{R}$.

In order to state optimal regularity estimates for the heat equation, a class of mixed space-time Besov spaces are also used, which was initiated by J.-Y. Chemin and N. Lerner \cite{CL} (see also \cite{CJY} for the particular case of Sobolev spaces).
\begin{defn}\label{Defn2.2}
	For $T>0, \sigma\in\mathbb{R}, 1\leq r,\rho\leq\infty$, the homogeneous Chemin-Lerner space $\tilde{L}^{\rho}_{T}(\dot{B}^{\sigma}_{p,r})$
	is defined by
	$$\tilde{L}^{\rho}_{T}(\dot{B}^{\sigma}_{p,r})\triangleq\left\{f\in L^{\rho}\left(0,T;S'_{0}\right):\|f\|_{\tilde{L}^{\rho}_{T}(\dot{B}^{\sigma}_{p,r})}<+\infty\right\},$$
	where
	\begin{equation}\label{Eq:2.8}
	\|f\|_{\tilde{L}^{\rho}_{T}(\dot{B}^{\sigma}_{p,r})}\triangleq\|(2^{j\sigma}\|\dot{\Delta}_{j} f\|_{L^{\rho}_{T}(L^{p})})\|_{\ell^{r}(\mathbb{Z})}.
	\end{equation}
\end{defn}
For notational simplicity, index $T$ is omitted if $T=+\infty $. We denote
\begin{equation*}
\tilde{\mathcal{C}}_{b}(\mathbb{R_{+}};\dot{B}_{p,r}^{\sigma})\triangleq \left\{f \in
\mathcal{C}(\mathbb{R_{+}};\dot{B}_{p,r}^{\sigma})\ \hbox{s.t}\ \|f\| _{\tilde{L}^{\infty}(\dot{B}_{p,r}^{\sigma})}<+\infty \right\} .
\end{equation*}
Furthermore, Minkowski¡¯s inequality
allows us to compare $\|\cdot\|_{\tilde{L}^{\rho}_{T}(\dot{B}^{\sigma}_{p,r})}$ with the more standard Lebesgue-Besov semi-norms of $L_{T}^{\rho} (\dot{B}_{p,r}^{\sigma})$ as follows.
\begin{rem}\label{Rem2.1}
	It holds that
	$$\|f\|_{\tilde{L}^{\rho}_{T}(\dot{B}^{\sigma}_{p,r})}\leq\|f\|_{L^{\rho}_{T}(\dot{B}^{\sigma}_{p,r})}\,\,\,
	\mbox{if} \,\, \, r\geq\rho;\ \ \ \
	\|f\|_{\tilde{L}^{\rho}_{T}(\dot{B}^{\sigma}_{p,r})}\geq\|f\|_{L^{\rho}_{T}(\dot{B}^{\sigma}_{p,r})}\,\,\,
	\mbox{if}\,\,\, r\leq\rho.$$
\end{rem}
Restricting the above norms \eqref{Eq:2.3} and \eqref{Eq:2.8} to the low or high
frequencies parts of distributions will be fundamental in our method. For that pourpose, we shall often use the following notation for some suitable integer $j_{0}$ \footnote{Note that for technical reasons, we need a small overlap between low and high frequencies.}
\begin{equation*}
\| f\| _{\dot{B}_{p,1}^{\sigma}}^{\ell} \triangleq \sum_{j\leq
	j_{0}}2^{j\sigma}\| \dot{\Delta}_{j}f\|_{L^{p}} \ \mbox{and} \ \|f\|_{\dot{B}_{p,1}^{\sigma}}^{h}\triangleq \sum_{j\geq j_{0}-1}2^{j\sigma}\| \dot{\Delta}_{j}f\| _{L^{p}},
\end{equation*}
\begin{equation*}
\|f\| _{\tilde{L}_{T}^{\infty} (\dot{B}_{p,1}^{\sigma})}^{\ell} \triangleq
\sum_{j\leq j_{0}}2^{j\sigma}\|\dot{\Delta}_{j}f\|_{L_{T}^{\infty} (L^{p})} \
\mbox{and} \ \|f\| _{\tilde{L}_{T}^{\infty} (\dot{B}_{p,1}^{\sigma})}^{h}\triangleq \sum_{j\geq j_{0}-1}2^{j\sigma}\| \dot{\Delta}_{j}f\|
_{L_{T}^{\infty} (L^{p})}.
\end{equation*}

Finally, we end this section with the parabolic regularity estimates for the
heat equation.
\begin{prop}\label{Prop2.7}
Let $\sigma\in \mathbb{R}$, $(p,r)\in [1,\infty]^{2}$ and $1\leq \rho_{2}\leq\rho_{1}\leq\infty$. Let $u$ satisfy
\begin{equation*}	
\left\{\begin{array}{lll}
	\d_{t}u-\mu\Delta u=f,\\
	u_{|t=0}=u_{0}.
	\end{array}
	\right.
\end{equation*}
Then for all $T>0$ the following a priori estimate is fulfilled:
\begin{equation}\label{Eq:2.9}
	\mu^{\frac{1}{\rho_{1}}}\|u\|_{\tilde L_{T}^{\rho_{1}}(\dot B^{\sigma+\frac{2}{\rho_1}}_{p,r})}\lesssim
	\|u_{0}\|_{\dot{B}^{\sigma}_{p,r}}+\mu^{\frac{1}{\rho_{2}}-1}\|f\|_{\tilde L^{\rho_{2}}_{T}(\dot {B}^{\sigma-2+\frac{2}{\rho_{2}}}_{p,r})}.
\end{equation}
\end{prop}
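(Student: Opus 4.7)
The plan is to reduce the maximal regularity estimate to a dyadic block-by-block inequality via Littlewood--Paley localization and the Duhamel representation. Applying $\dot{\Delta}_j$ to the heat equation gives
\begin{equation*}
\partial_t u_j - \mu \Delta u_j = f_j, \qquad u_j|_{t=0} = u_{0,j},
\end{equation*}
with $u_j \triangleq \dot{\Delta}_j u$ and $f_j \triangleq \dot{\Delta}_j f$, so that
\begin{equation*}
u_j(t) = e^{\mu t \Delta} u_{0,j} + \int_0^t e^{\mu(t-\tau)\Delta} f_j(\tau)\, d\tau.
\end{equation*}

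The cornerstone is the exponential decay of the heat semigroup on spectrally localized functions: since $u_j$ has spectrum in the annulus $\{R_1 2^j \le |\xi| \le R_2 2^j\}$, there exists a universal $c>0$ such that
\begin{equation*}
\|e^{\mu t \Delta} u_j\|_{L^p} \leq C e^{-c \mu 2^{2j} t} \|u_j\|_{L^p}, \qquad t\geq 0.
\end{equation*}
This is obtained by writing $e^{\mu t\Delta} u_j$ as convolution of $u_j$ with $\mathcal{F}^{-1}\bigl(\tilde{\varphi}(2^{-j}\xi)\,e^{-\mu t |\xi|^2}\bigr)$, where $\tilde{\varphi}\in C^\infty_c(\mathbb{R}^d\setminus\{0\})$ equals one on $\mathrm{Supp}\,\varphi$, and controlling the $L^1$-norm of this kernel by the standard rescaling $\xi \mapsto 2^j \xi$; this is the only step where the Fourier localization is used crucially, and it is the main technical ingredient. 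Combining with Minkowski's inequality in time then yields the scalar pointwise bound
\begin{equation*}
\|u_j(t)\|_{L^p} \lesssim e^{-c\mu 2^{2j} t}\|u_{0,j}\|_{L^p} + \int_0^t e^{-c\mu 2^{2j}(t-\tau)} \|f_j(\tau)\|_{L^p}\, d\tau.
\end{equation*}

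Next, I take $L^{\rho_1}(0,T)$ in $t$. The homogeneous piece contributes $\bigl\|e^{-c\mu 2^{2j}\cdot}\bigr\|_{L^{\rho_1}(0,T)} \lesssim (\mu 2^{2j})^{-1/\rho_1}$ (with the obvious convention for $\rho_1=\infty$). For the Duhamel integral I use Young's convolution inequality in time with $1+1/\rho_1 = 1/q+1/\rho_2$; the hypothesis $\rho_2\leq\rho_1$ ensures $q\in[1,\infty]$ so that
\begin{equation*}
\biggl\| \int_0^{\cdot} e^{-c\mu 2^{2j}(\cdot - \tau)} \|f_j(\tau)\|_{L^p} d\tau \biggr\|_{L^{\rho_1}(0,T)} \lesssim (\mu 2^{2j})^{-1/q} \|f_j\|_{L^{\rho_2}_T(L^p)}.
\end{equation*}

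Finally, I multiply by $\mu^{1/\rho_1} 2^{j(\sigma+2/\rho_1)}$ and take the $\ell^r(\mathbb Z)$ norm. Using $1/\rho_1-1/q=1/\rho_2-1$, the exponents of $\mu$ and $2^{2j}$ rearrange so that the data term becomes $2^{j\sigma}\|u_{0,j}\|_{L^p}$ and the forcing term becomes $\mu^{1/\rho_2-1}\,2^{j(\sigma-2+2/\rho_2)}\|f_j\|_{L^{\rho_2}_T(L^p)}$. Summing in $\ell^r$ and invoking the definition \eqref{Eq:2.8} of the Chemin--Lerner norms gives exactly \eqref{Eq:2.9}. The only obstacle is the frequency-localized semigroup bound above; once that is granted, the remainder is scalar convolution arithmetic with exponents.
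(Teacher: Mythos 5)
Your proof is correct and is exactly the standard argument behind this classical estimate, which the paper does not reprove but takes from \cite{BCD} (Littlewood--Paley localization, exponential decay of the heat semigroup on annulus-localized blocks, Duhamel plus Young's inequality in time, then $\ell^r$ summation into the Chemin--Lerner norms); the exponent bookkeeping with $1+1/\rho_1=1/q+1/\rho_2$ is right. The only small caveat is that the kernel bound $\|\mathcal{F}^{-1}(\tilde\varphi(2^{-j}\xi)e^{-\mu t|\xi|^2})\|_{L^1}\lesssim e^{-c\mu 2^{2j}t}$ does not follow from rescaling alone: after reducing to $j=0$ one still needs the usual weighted integration-by-parts argument (bounding $(1+|x|^2)^d$ times the kernel via $\xi$-derivatives on the annulus) to extract the uniform exponential factor.
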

\begin{rem} \label{Rem2.2}
	The solutions to the following \emph{Lam\'e system}
	\begin{equation}\label{Eq:2.10}
	\left\{\begin{array}{lll}\d_{t}u-\mu\Delta u-\left(\lambda+\mu\right)\nabla \mathrm{div}\,u=f,\\
	u_{|t=0}=u_{0},
	\end{array}
	\right.
	\end{equation}
	where $\lambda$ and $\mu$ are constant coefficients such that $\mu>0$ and $\lambda+2\mu>0,$
	also fulfill \eqref{Eq:2.9} (up to the dependence w.r.t. the viscosity). Indeed, if we denote by $\mathcal{P}\triangleq\mathrm{Id}-\nabla(-\Delta)^{-1}\mathrm{div}$
	and $\mathcal{Q}\triangleq\mathrm{Id}-\mathcal{P}$ the orthogonal projectors over divergence-free
	and potential vector fields, then we see both $\mathcal{P}u$ and $\mathcal{Q} u$ satisfy the heat equation, as it can easily be observed by applying $\mathcal{P}$ and $\mathcal{Q}$ to \eqref{Eq:2.10}.
\end{rem}


\section{The proof of Time-decay estimates} \setcounter{equation}{0}
This section is devoted to the proof of Theorem \ref{Thm1.1} taking for granted the global existence result in \cite{DH}. We denote by $\mathcal{X}_{p}(t)$ the energy norm:
\begin{eqnarray}\label{Eq:3.1}
	\mathcal{X}_{p}(t)&\triangleq&\|(a,v,\theta)\|_{\widetilde{L}_{t}^{\infty} (\dot{B}_{2,1}^{\frac {d}{2}-1})}^{\ell}
	+\|(a,v,\theta)\|_{L_{t}^{1} (\dot{B}_{2,1}^{\frac{d}{2}+1})}^{\ell} +\|(\nabla a,v)\|_{\widetilde{L}_{t}^{\infty}(\dot{B}_{p,1}^{\frac {d}{p}-1})}^{h}\nonumber \\ [1mm]
	&&+\|\theta\|_{\widetilde{L}_{t}^{\infty}(\dot{B}_{p,1}^{\frac {d}{p}-2})}^{h}+\|(a,\nabla v,\theta)\|_{L_{t}^{1}(\dot{B}_{p,1}^{\frac {d}{p}})}^{h}.
\end{eqnarray}
In what follows, we shall use repeatedly the following obvious inequality that is satisfied whenever $0\leq \sigma_{1}\leq \sigma_{2}$ and $\sigma_{2}>1$:
\begin{equation}\label{Eq:3.2}
\int_{0}^{t}\langle t-\tau\rangle^{-\sigma_{1}}\langle\tau\rangle^{-\sigma_{2}}d\tau\lesssim\langle t\rangle^{-\sigma_{1}}.
\end{equation}
Let us keep in mind that the global solution $(a,v,\theta)$ satisfies
\begin{equation}\label{Eq:3.3}
\left\|a\right\|_{\tilde{L}^{\infty}_{t}(\dot{B}^{\frac{d}{p}}_{p,1})}\leq c \ll 1 \ \ \hbox{for all} \ \ t\geq0.
\end{equation}
\subsection{First step: Bounds for the low frequencies}
Let $(E(t))_{t\geq0}$ be the semi-group associated with the left-hand side of \eqref{Eq:1.8}. The standard Duhamel principle yields
\begin{equation*}
\left(
\begin{array}{c}
a(t)\\
v(t)\\
\theta(t)
\end{array}
\right)
=E(t)
\left(
\begin{array}{c}
a_{0}\\
v_{0}\\
\theta_{0}
\end{array}
\right)
+\int_{0}^{t}E(t-\tau)
\left(
\begin{array}{c}
f(\tau)\\
g(\tau)\\
k(\tau)
\end{array}
\right)d\tau.
\end{equation*}
First of all, we state smoothing estimate of the linearized solution $\left(a_{L},v_{L},\theta_{L}\right)\triangleq E(t)\left(a_{0},v_{0},\theta_{0}\right)$, which behaves like that of heat kernel.
\begin{lem}\label{Lem3.1}
	Let $\left(a_{L},v_{L},\theta_{L}\right)$ be the solution to the following system
\begin{equation*}
	\left\{
	\begin{array}{l}
	\partial _{t}a_{L}+\mathrm{div}\,v_{L}=0, \\[2mm]
	\partial _{t}v_{L}-\tilde{\mathcal{A}}v_{L}+\nabla a_{L}+\gamma \nabla \theta_{L}=0,\\[2mm]
	\partial_{t}\theta_{L}-\beta\Delta\theta_{L}+\gamma\,\mathrm{div}\,v_{L}=0
	\end{array}
	\right.
\end{equation*}
with the initial data
\begin{equation*}
	(a_{L},v_{L},\theta_{L})|_{t=0}=(a_{0},v_{0},\theta_{0}).
\end{equation*}
Then, for any $j_{0}\in \mathbb{Z}$, there exists a positive constant $c_{0}=c_{0}\left(\lambda_{\infty},\mu_{\infty},\beta,\gamma,j_{0}\right)$ such that
\begin{equation*}
	\|(a_{L,j},v_{L,j},\theta_{L,j})(t)\|_{L^{2}}\lesssim e^{-c_{0}2^{2j}t} \|(a_{0,j},v_{0,j},\theta_{0,j})\|_{L^{2}}
\end{equation*}
for $t\geq0$ and $j\leq j_{0}$, where we set $z_{j}=\dot{\Delta}_{j}z$ for any $z\in S'(\mathbb{R}^{d})$.
\end{lem}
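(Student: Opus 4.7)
My strategy is to localize in frequency, pass to the Fourier side, and construct an equivalent Lyapunov functional whose decay rate at frequency $2^j$ is $2^{2j}$. Since $\dot\Delta_j$ commutes with the linear operator, each block $(a_{L,j},v_{L,j},\theta_{L,j})$ solves the same system with frequency-truncated data, so everything reduces to analyzing the symbol at a single frequency $\xi$. Splitting $v_{L,j}=\mathcal{P} v_{L,j}+\mathcal{Q} v_{L,j}$ via the Helmholtz projectors, the divergence-free part decouples and solves the pure heat equation $\partial_t\mathcal{P} v_{L,j}-(\mu_\infty/\nu)\Delta\mathcal{P} v_{L,j}=0$, so a standard spectral bound immediately gives $\|\mathcal{P} v_{L,j}(t)\|_{L^2}\lesssim e^{-c_0 2^{2j}t}\|\mathcal{P} v_{0,j}\|_{L^2}$ for every $j\in\mathbb{Z}$.

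For the coupled part I would pass to the scalar unknown $d\triangleq\Lambda^{-1}\mathrm{div}\,v$ (so $\|d\|_{L^2}=\|\mathcal{Q} v\|_{L^2}$). Using $\mathrm{div}(\tilde{\mathcal{A}}v)=\Delta\,\mathrm{div}\,v$, at each frequency $r\triangleq|\xi|$ the triple $(\hat a,\hat d,\hat\theta)$ satisfies
\begin{equation*}
\partial_t\hat a=-r\hat d,\qquad \partial_t\hat d=-r^2\hat d+r\hat a+\gamma r\hat\theta,\qquad \partial_t\hat\theta=-\beta r^2\hat\theta-\gamma r\hat d.
\end{equation*}
The naive $L^2$ identity only dissipates $\hat d$ and $\hat\theta$ since the continuity equation is of transport type. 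To extract damping for $\hat a$ as well, I would introduce the Lyapunov functional
\begin{equation*}
\mathcal{L}_r(t)\triangleq|\hat a|^2+|\hat d|^2+|\hat\theta|^2-2\eta\,r\,\mathrm{Re}(\hat a\,\overline{\hat d}),
\end{equation*}
whose time derivative is computed directly as
\begin{equation*}
\tfrac12\tfrac{d}{dt}\mathcal{L}_r=-(1-\eta)r^2|\hat d|^2-\beta r^2|\hat\theta|^2-\eta r^2|\hat a|^2+\eta r^3\mathrm{Re}(\hat a\,\overline{\hat d})-\eta\gamma r^2\mathrm{Re}(\hat a\,\overline{\hat\theta}).
\end{equation*}
Restricting to $r\leq R\triangleq\tfrac{8}{3}\cdot 2^{j_0}$ and picking $\eta=\eta(R,\beta,\gamma)$ small, Young's inequality with well-chosen weights absorbs the two indefinite cross terms into the three manifestly negative ones, yielding simultaneously $\mathcal{L}_r\approx|\hat a|^2+|\hat d|^2+|\hat\theta|^2$ and $\tfrac{d}{dt}\mathcal{L}_r+c\,r^2\mathcal{L}_r\leq 0$ with $c=c(\lambda_\infty,\mu_\infty,\beta,\gamma,j_0)>0$. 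Gronwall's inequality followed by Plancherel, integrated over the spectral support of $\dot\Delta_j$ (contained in $\{r\lesssim 2^j\leq R\}$ whenever $j\leq j_0$), then delivers the announced exponential bound.

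The main obstacle, and the whole reason for introducing the cross term, is precisely the absence of diffusion in the continuity equation: the hyperbolic coupling $\partial_t\hat a=-r\hat d$ can only transfer dissipation from $\hat d$ to $\hat a$ through the Lyapunov correction, and the low-frequency restriction $j\leq j_0$ is what keeps $\eta r$ uniformly small so that the corrected functional stays equivalent to $|\hat a|^2+|\hat d|^2+|\hat\theta|^2$ and the produced cross terms remain absorbable. At high frequencies this recipe breaks down, which is consistent with the fact that a fundamentally different ``effective velocity'' reformulation is invoked in the high-frequency analysis mentioned in the introduction.
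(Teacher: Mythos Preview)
Your proposal is correct and follows the standard Lyapunov-functional approach that the paper implicitly relies on: the paper does not prove Lemma~3.1 itself but refers to \cite{DX2}, where precisely this Fourier-side energy method with a cross term $\mathrm{Re}(\hat a\,\overline{\hat d})$ is used to recover dissipation on the hyperbolic component at low frequency. Your computation of $\tfrac{d}{dt}\mathcal{L}_r$ is accurate, the absorption of the two indefinite terms via Young's inequality and the smallness of $\eta=\eta(R,\beta,\gamma)$ is exactly the right mechanism, and the restriction $r\le R=\tfrac{8}{3}2^{j_0}$ is what makes both the equivalence $\mathcal{L}_r\approx|\hat a|^2+|\hat d|^2+|\hat\theta|^2$ and the absorption work uniformly.
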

The interested reader is referred to \cite{DX2} for the proof of Lemma \ref{Lem3.1}.
Set $U\triangleq (a,v,\theta)$ and $U_{0}\triangleq (a_{0},v_{0},\theta_{0})$. From  Lemma \ref{Lem3.1}, we perform the same procedure as in \cite{DX1,DX2} to obtain for $s+s_{1}>0$,
\begin{equation*}
\sup_{t\geq 0}t^{\frac{s_{1}+s}{2}}\|E(t)U_{0}\|^{\ell}_{\dot{B}^{s}_{2,1}}
\lesssim \|U_{0}\|^{\ell}_{\dot{B}^{-s_{1}}_{2,\infty}}.
\end{equation*}
Additionally, it is clear that for $s+s_{1}>0$,
\begin{equation*}
\|E(t)U_{0}\|^{\ell}_{\dot{B}^{s}_{2,1}}\lesssim \|U_{0}\|^{\ell}_{\dot{B}^{-s_{1}}_{2,\infty}}\sum_{j\leq j_{0}}2^{j\left(s_{1}+s\right)}\lesssim \|U_{0}\|^{\ell}_{\dot{B}^{-s_{1}}_{2,\infty}}.
\end{equation*}
Then it follows that
\begin{equation*}
\sup_{t\geq 0}\langle t\rangle^{\frac{s_{1}+s}{2}}\|E(t)U_{0}\|^{\ell}_{\dot{B}^{s}_{2,1}}\lesssim \|U_{0}\|^{\ell}_{\dot{B}^{-s_{1}}_{2,\infty}}
\ \ \hbox{with} \ \ \langle t\rangle\triangleq\sqrt{1+t^{2}}.
\end{equation*}
Consequently, with the aid of Duhamel formula, we end up with
\begin{equation}\label{Eq:3.4}
\|(a,v,\theta)\|^{\ell}_{\dot{B}^{s}_{2,1}}\lesssim \langle \tau\rangle^{-\frac{s_{1}+s}{2}}\|(a_{0},v_{0},\theta_{0})\|^{\ell}_{\dot{B}^{-s_{1}}_{2,\infty}}+\int_{0}^{t}
\langle t-\tau\rangle^{-\frac{s_{1}+s}{2}}\|(f,g,k)(\tau)\|^{\ell}_{\dot{B}^{-s_{1}}_{2,\infty}}d\tau.
\end{equation}

Bounding the time-weighted integral on the right side of \eqref{Eq:3.4} is included in the following proposition.
\begin{prop}\label{Prop3.1}
Let $p$ fulfills \eqref{Eq:1.9}, then it holds that for all $t\geq 0$,
\begin{equation*}
\int_{0}^{t}\langle t-\tau\rangle^{-\frac{s_{1}+s}{2}} \|(f,g,k)(\tau)\|^{\ell}_{\dot{B}^{-s_{1}}_{2,\infty}}d\tau\lesssim \langle t\rangle^{-\frac{s_{1}+s}{2}}\left(\mathcal{D}^{2}_{p}(t)+\mathcal{X}^{2}_{p}(t)\right),
\end{equation*}
provided that $-s_{1}<s\leq \frac{d}{2}+1$, where $\mathcal{X}_{p}(t)$ and $\mathcal{D}_{p}(t)$ have been defined in \eqref{Eq:3.1} and \eqref{Eq:1.13}, respectively.
\end{prop}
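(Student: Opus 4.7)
\medskip
\noindent\textbf{Proof proposal.} The overarching strategy is to combine the elementary time-decay inequality \eqref{Eq:3.2} with a pointwise-in-$\tau$ bound
$$\|(f,g,k)(\tau)\|^{\ell}_{\dot{B}^{-s_{1}}_{2,\infty}}\lesssim \langle\tau\rangle^{-\delta}\bigl(\mathcal{D}_{p}^{2}(\tau)+\mathcal{X}_{p}^{2}(\tau)\bigr)$$
with some $\delta$ that meets two constraints: $\delta\geq (s_{1}+s)/2$ for every admissible $s\leq d/2+1$ (so $\delta\geq s_{1}/2+d/4+1/2$ suffices), and $\delta>1$. Precisely the lower bound $s_{1}>1-d/2$ in \eqref{Eq:1.10} is what makes $s_{1}/2+d/4+1/2>1$ (this is the point already flagged in Remark \ref{Rem1.1}). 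So the whole task reduces to producing, for every nonlinear term in $(f,g,k)$, such a pointwise decay estimate with exponent $\delta=s_{1}/2+d/4+1/2$, after which insertion into \eqref{Eq:3.4} via \eqref{Eq:3.2} closes the argument.

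The key step is to decompose every factor of every quadratic term as $z=z^{\ell}+z^{h}$ and to handle the three resulting interaction types separately. For low--low products (e.g.\ $(av)^{\ell\ell}$ in $f$, $(v\cdot\nabla v)^{\ell\ell}$ in $g$, $(v\cdot\nabla\theta)^{\ell\ell}$ in $k$) I would apply the non-standard estimate of Proposition \ref{Prop2.3} to land directly in $\dot{B}^{-s_{1}}_{2,\infty}$; one factor is measured in the time-weighted low-frequency $\dot{B}^{s}_{2,1}$-norm of $\mathcal{D}_{p}(t)$ with an $s$ chosen to consume the derivative and make the decay exponents add up to the required $\delta$. For mixed low--high interactions, Proposition \ref{Prop2.4} is the right tool: one factor is tested in the $\dot{B}^{-\sigma}_{p,\infty}$ norm on the high-frequency part and the other in $\dot{B}^{\sigma}_{p,1}$ (again controlled by the time-weighted pieces of $\mathcal{D}_{p}$), yielding the endpoint output $\dot{B}^{-s_{0}}_{2,\infty}\hookrightarrow \dot{B}^{-s_{1}}_{2,\infty}$ since $s_{1}\leq s_{0}$. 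High--high interactions are the softest: Proposition \ref{Prop2.2} combined with the chain $\dot{B}^{0}_{2,1}\hookrightarrow L^{2}\hookrightarrow \dot{B}^{-s_{1}}_{2,\infty}$ (valid because $s_{1}>1-d/2>-d/2$) suffices, at the cost of a product of two high-frequency time-weighted norms from $\mathcal{D}_{p}$, whose combined decay is amply $\geq \delta$. The composition functions $I(a)$, $K_{i}(a)$, $\tilde\mu(a)$, $\tilde\lambda(a)$, $\tilde\kappa(a)$ are reduced to the underlying $a$ via Proposition \ref{Prop2.5}, using the smallness \eqref{Eq:3.3}.

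The main obstacle, and the genuinely new part, is the regime $1-d/2<s_{1}<\max(0,2-d/2)$ which was excluded in \cite{DX2}. In this regime $-s_{1}$ can be very positive and standard product rules no longer pair the available regularities correctly, which is why the announced non-standard estimates \eqref{Eq:3.6}--\eqref{Eq:3.8} must be developed and used. A second concrete difficulty is that the temperature $\theta$ carries one fewer order of regularity than $v$, so the terms that mix $\theta$ with derivatives of $a$ or $v$ (typically $\gamma\,\mathrm{div}\,v$ coupled through $k$, the pressure-type contribution $\theta\nabla K_{3}(a)$ in $g$, and the quadratic viscous dissipations $D(v){:}D(v)$ and $(\mathrm{div}\,v)^{2}$ in $k$) force one to spend the decay budget carefully; I would allocate to each $\theta$-factor the weight $\|\langle\tau\rangle^{\alpha}\theta\|^{h}_{\tilde L^{\infty}_{t}(\dot B^{d/p-2}_{p,1})}$ or the low-frequency piece at regularity $s=d/2$, and use Proposition \ref{Prop2.4} with parameter $\sigma$ tuned to $d/p-1$ (respectively $d/p-2$) to absorb the missing derivative. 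Once every term has been reduced to a product of two time-weighted norms from $\mathcal{D}_{p}(t)$ (or, for the ``$L^{1}$-in-time'' dissipation pieces, from $\mathcal{X}_{p}(t)$) whose weight exponents sum to at least $s_{1}/2+d/4+1/2$, inequality \eqref{Eq:3.2} concludes the proof.
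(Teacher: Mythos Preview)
Your strategic outline is broadly in the spirit of the paper, but the central claim that one can obtain a \emph{uniform} pointwise-in-$\tau$ bound
\[
\|(f,g,k)(\tau)\|^{\ell}_{\dot B^{-s_1}_{2,\infty}}\lesssim \langle\tau\rangle^{-\delta}\bigl(\mathcal D_p^2(\tau)+\mathcal X_p^2(\tau)\bigr)
\]
has a genuine gap for every term carrying a high-frequency factor at regularity $\dot B^{d/p}_{p,1}$ (e.g.\ $K_2(a)\nabla\theta^h$, $\theta^h\nabla K_3(a)$, $k_1(a,\theta^h)$, $k_2(a,\nabla v,\nabla v^h)$). The only pointwise control on $\|\theta^h(\tau)\|_{\dot B^{d/p}_{p,1}}$ or $\|\nabla v^h(\tau)\|_{\dot B^{d/p}_{p,1}}$ coming from $\mathcal D_p$ is the $\tau^\alpha$-weighted piece in \eqref{Eq:1.13} (not $\langle\tau\rangle^\alpha$), which is useless near $\tau=0$; and $\mathcal X_p$ provides only $L^1_t$ integrability of these quantities, never a pointwise bound. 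Consequently no inequality of the form you state can hold on $[0,1]$, and \eqref{Eq:3.2} cannot be invoked directly. The paper's remedy (Lemma~\ref{Lem3.3}) is to split $\int_0^t=\int_0^1+\int_1^t$: on $[0,1]$ one uses $\langle t-\tau\rangle^{-(s_1+s)/2}\lesssim\langle t\rangle^{-(s_1+s)/2}$ together with H\"older in time and the $L^1_t(\dot B^{d/p}_{p,1})$ bound from $\mathcal X_p$, producing $\langle t\rangle^{-(s_1+s)/2}\mathcal X_p^2$; on $[1,t]$ one has $\tau\sim\langle\tau\rangle$, so the $\tau^{-\alpha}$ decay becomes an honest $\langle\tau\rangle^{-\alpha}$ and \eqref{Eq:3.2} applies with the $\mathcal D_p^2$ budget. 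This time-splitting is not a cosmetic detail; it is exactly how $\mathcal X_p^2$ enters the final estimate and cannot be bypassed by the pointwise scheme you propose.

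A secondary concrete error: your route for high--high interactions via $\dot B^{0}_{2,1}\hookrightarrow L^2\hookrightarrow \dot B^{-s_1}_{2,\infty}$ fails whenever $s_1>0$ (which covers most of the admissible range $(1-d/2,s_0]$), since on low frequencies $\|\cdot\|^{\ell}_{\dot B^{-s_1}_{2,\infty}}$ \emph{dominates} $\|\cdot\|_{L^2}$ rather than the reverse. The correct device is Proposition~\ref{Prop2.4} (equivalently \eqref{Eq:3.17}), which outputs the endpoint space $\dot B^{-s_0}_{2,\infty}$ and then uses $-s_1\geq -s_0$ on low frequencies to pass to $\dot B^{-s_1}_{2,\infty}$.
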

Indeed, we decompose the nonlinear term $g=\sum \limits_{i=1}^{6}\mathcal{G}_{i}$ with
\begin{eqnarray*}
&&\mathcal{G}_{1}=-v\cdot \nabla v, \ \ \ \ \mathcal{G}_{2}=-K_{1}(a)\nabla a, \\ &&\mathcal{G}_{3}=\frac{1}{(1+a)\nu}\left(2\widetilde{\mu}(a)\mathrm{div} D(u)+\widetilde{\lambda}(a)\nabla \mathrm{div}\,v\right)-I(a)\widetilde{\mathcal{A}}v,\\
&&\mathcal{G}_{4}=\frac{1}{(1+a)\nu}\left(\widetilde{\mu}'(a) D(u)\cdot \nabla a
+\widetilde{\lambda}'(a) \mathrm{div}\,v \nabla a\right),\\
&& \mathcal{G}_{5}=-K_{2}(a)\nabla \theta, \ \ \ \ \ \ \mathcal{G}_{6}=-\theta \nabla K_{3}(a).
\end{eqnarray*}
As shown by \cite{XJ}, we can get the following inequality
\begin{equation} \label{Eq:3.5}
\int_{0}^{t}\langle t-\tau\rangle^{-\frac{s_{1}+s}{2}} \|(f,\mathcal{G}_{1},\mathcal{G}_{2},\mathcal{G}_{3},\mathcal{G}_{4})(\tau)\|^{\ell}_{\dot{B}^{-s_{1}}_{2,\infty}}d\tau\lesssim \langle t\rangle^{-\frac{s_{1}+s}{2}}\left(\mathcal{D}^{2}_{p}(t)+\mathcal{X}^{2}_{p}(t)\right).
\end{equation}

In order to finish the proof of Proposition \ref{Prop3.1}, it suffices to bound those ``new'' nonlinear terms, which are not available in the barotropic compressible Navier-Stokes system. For that end, let us decompose $\mathcal{G}_{5}$, $\mathcal{G}_{6}$ and $k$ in terms of low-frequency and high frequency as follows:
$$\mathcal{G}_{5}=\mathcal{G}^{\ell}_{5}+\mathcal{G}^{h}_{5}, \ \ \ \ \mathcal{G}_{6}=\mathcal{G}^{\ell}_{6}+\mathcal{G}^{h}_{6}$$
with
$$\mathcal{G}^{\ell}_{5}=-K_{2}(a)\nabla \theta^{\ell}, \ \ \ \ \mathcal{G}^{h}_{5}=-K_{2}(a)\nabla \theta^{h}, \ \ \
\mathcal{G}^{\ell}_{6}=-\theta^{\ell} \nabla K_{3}(a), \ \ \ \ \mathcal{G}^{h}_{6}=-\theta^{h} \nabla K_{3}(a)$$
and
$$k=k^{\ell}+k^{h}$$
with
\begin{eqnarray*}
k^{\ell}&\triangleq&-v \cdot \nabla \theta^{\ell}  -\big(\tilde{K}_{1}(a)+\tilde{K}_{2}(a)\theta\big) \mathrm{div}\, v^{\ell}
+\frac{\tilde{\kappa}'(a) }{(1+a)\nu}\nabla a \cdot \nabla \theta^{\ell} \\
&&+k_{1}(a,\theta^{\ell})+ k_{2}(a,\nabla v,\nabla v^{\ell})\\
k^{h}&\triangleq&-v \cdot \nabla \theta^{h}  -\big(\tilde{K}_{1}(a)+\tilde{K}_{2}(a)\,\theta\big) \mathrm{div}\, v^{h}
+\frac{\tilde{\kappa}'(a)}{(1+a)\nu} \nabla a \cdot \nabla \theta^{h} \\
&&+k_{1}(a,\theta^{h})+ k_{2}(a,\nabla v,\nabla v^{h}),
\end{eqnarray*}
where
\begin{eqnarray*}
	k_{1}(a,\Theta)&\triangleq& \frac{\tilde{\kappa}(a)}{\nu(1+a)} \Delta \Theta -\beta I(a) \Delta \Theta,\\
	k_{2}(a,V_{1},V_{2})&\triangleq& \frac{1}{\nu\chi_{0}}\sqrt{\frac{1}{\mathcal{T}_{\infty}C_{v}}}\left(\frac{\mu\big(\varrho_{\infty}(1+a)\big)}{1+a} \left(V_{1}:V_{2}+V_{1}:{}^T\!V_{2}\right)\right.\\
	&&\left.+\frac{\lambda\big(\varrho_{\infty}(1+a)\big)}{1+a}\mathrm{Tr}\,V_{1} \mathrm{Tr}\,V_{2}\right)
\end{eqnarray*}
and
$$z^{\ell}\triangleq\sum_{j< j_0} \dot{\Delta}_{j}z, \ \ z^{h}\triangleq z-z^{\ell} \ \ \hbox{for} \ \ z=v,\theta.$$

Let us split the proof of Proposition 3.1 into two lemmas.
\begin{lem}\label{Lem3.2}
If $p$ satisfies  \eqref{Eq:1.9}, then it holds that for all $t\geq 0$,
\begin{equation*}
\int_{0}^{t}\langle t-\tau\rangle^{-\frac{s_{1}+s}{2}} \|(\mathcal{G}_{5}^{\ell},\mathcal{G}_{6}^{\ell},k^{\ell})(\tau)\|^{\ell}_{\dot{B}^{-s_{1}}_{2,\infty}}d\tau\lesssim \langle t\rangle^{-\frac{s_{1}+s}{2}}\left(\mathcal{D}^{2}_{p}(t)+\mathcal{X}^{2}_{p}(t)\right),
\end{equation*}
provided that $-s_{1}<s\leq \frac{d}{2}+1$.
\end{lem}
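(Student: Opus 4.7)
\textbf{Proof plan for Lemma \ref{Lem3.2}.} The strategy is to establish, for each summand in $\mathcal{G}_5^{\ell}$, $\mathcal{G}_6^{\ell}$ and $k^\ell$, a pointwise (in $\tau$) bound of the shape
\[
\|N(\tau)\|^{\ell}_{\dot{B}^{-s_1}_{2,\infty}} \lesssim \langle\tau\rangle^{-\delta}\,\bigl(\mathcal{D}_p(t)+\mathcal{X}_p(t)\bigr)^2,
\]
where the exponent $\delta$ depends on the admissible splitting of frequencies and derivatives but always satisfies $\delta \geq \tfrac{s_1+s}{2}$ and $\delta>1$. Once this is in hand, the time convolution is closed by applying the elementary inequality \eqref{Eq:3.2}, which returns the required factor $\langle t\rangle^{-\frac{s_1+s}{2}}$. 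The integrability requirement $\delta>1$ is exactly the motivation for the sharp lower endpoint $1-\tfrac{d}{2}<s_1$ recorded in Remark \ref{Rem1.1}.

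For the pointwise bounds I would argue as follows. The algebraic products $\mathcal{G}_5^\ell=-K_2(a)\nabla\theta^\ell$ and $\mathcal{G}_6^\ell=-\theta^\ell\nabla K_3(a)$, together with the analogous transport-type and zero-order terms in $k^\ell$, are treated by the non-standard Besov product estimate of Proposition \ref{Prop2.3} (or the third statement of Proposition \ref{Prop2.2}), combined with the composition estimate of Proposition \ref{Prop2.5} applied to $K_2$, $K_3$, $\widetilde{K}_1$, $\widetilde{K}_2$, $I$, $\widetilde{\kappa}$; the function of $a$ is absorbed into $\mathcal{X}_p(t)$ (using \eqref{Eq:3.3} to control compositions) while the low-frequency factor $\theta^\ell$ or $v^\ell$ provides the decay via
\[
\|\theta\|^{\ell}_{\dot{B}^{s'}_{2,1}} + \|v\|^{\ell}_{\dot{B}^{s'}_{2,1}} \lesssim \langle\tau\rangle^{-\frac{s_1+s'}{2}}\mathcal{D}_p(t),\qquad s'\in[\varepsilon-s_1,\tfrac{d}{2}+1].
\]
For the second-order and quadratic terms $k_1(a,\theta^\ell)$, $k_2(a,\nabla v,\nabla v^\ell)$ and $\tfrac{\widetilde{\kappa}'(a)}{(1+a)\nu}\nabla a\cdot\nabla\theta^\ell$, I would exploit Bernstein's inequality \eqref{Eq:2.6} to transfer a derivative from the low-frequency factor into its $\dot{B}^{s'}_{2,1}$ norm with $s'$ taken close to the upper endpoint $\tfrac{d}{2}+1$, so that the remaining factor (either $a$, $\nabla a$, or $\nabla v$) can be estimated by the $h$-part of $\mathcal{X}_p(t)$ or the $\alpha$-weighted high-frequency piece of $\mathcal{D}_p(t)$ through Proposition \ref{Prop2.4}. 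The term $v\cdot\nabla\theta^\ell$ requires an additional splitting $v=v^\ell+v^h$, with the low--low piece treated by Proposition \ref{Prop2.2} in $L^2$-based spaces and the high--low piece by \eqref{Eq:2.4}.

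The main obstacle I expect is the bookkeeping: for each term there are several admissible ways to distribute Lebesgue exponents (subject to $\tfrac{1}{p_1}+\tfrac{1}{p_2}\leq 1$ in Propositions \ref{Prop2.2}--\ref{Prop2.3}) and to place derivatives, and one must verify that some admissible choice yields $\delta\geq \tfrac{s_1+s}{2}$ together with $\delta>1$ uniformly for every $s\in(-s_1,\tfrac{d}{2}+1]$ and every $s_1\in(1-\tfrac{d}{2},s_0]$. The temperature's weaker high-frequency regularity ($\dot{B}^{d/p-2}_{p,1}$ versus $\dot{B}^{d/p-1}_{p,1}$ for velocity) further restricts which factor may be sent to the high-frequency side in $k_1(a,\theta^\ell)$ and $\mathcal{G}_5^\ell$; this is precisely where the widened lower range $s_1\in(1-\tfrac{d}{2},\max(0,2-\tfrac{d}{2}))$, which is not covered in \cite{DX2}, makes the analysis more delicate and forces working in the weak space $\dot{B}^{-s_1}_{2,\infty}$ rather than $\dot{B}^{-s_1}_{2,1}$.
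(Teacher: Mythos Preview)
Your overall strategy---deriving pointwise bounds $\|N(\tau)\|^\ell_{\dot{B}^{-s_1}_{2,\infty}} \lesssim \langle\tau\rangle^{-\delta}(\mathcal{D}_p+\mathcal{X}_p)^2$ with $\delta\geq\tfrac{s_1+s}{2}$ and $\delta>1$, then closing via \eqref{Eq:3.2}---matches the paper exactly. However, there is a concrete gap in how you allocate the decay.

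You propose to absorb the $a$-factor (through $K_2(a)$, $K_3(a)$, $\widetilde K_1(a)$, etc.) into $\mathcal{X}_p(t)$ with no time weight, and to extract all the decay from the low-frequency factor $\theta^\ell$ or $v^\ell$. This does not produce $\delta>1$. Take $\mathcal{G}_5^\ell=-K_2(a)\nabla\theta^\ell$: with the product estimate from Proposition~\ref{Prop2.3} in the form $\|FG\|_{\dot{B}^{-s_1}_{2,\infty}}\lesssim\|F\|_{\dot{B}^{d/p}_{p,1}}\|G\|_{\dot{B}^{-s_1}_{2,1}}$, the factor $\|\nabla\theta^\ell\|_{\dot{B}^{-s_1}_{2,1}}$ contributes only $\langle\tau\rangle^{-1/2}$. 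If instead you push $\nabla\theta^\ell$ toward the top regularity $\dot{B}^{d/2}_{2,1}$ to harvest more decay, the conjugate constraint in Proposition~\ref{Prop2.3} forces $K_2(a)$ into a negative-index space like $\dot{B}^{d/p-d/2-s_1}_{p,1}$, whose low-frequency part is \emph{not} controlled by $\mathcal{X}_p$ once $s_1>1-\tfrac{d}{2}$ (precisely the new regime the lemma is meant to cover).

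The paper's remedy is that \emph{both} factors must decay through $\mathcal{D}_p$. The key observation is
\[
\|a(\tau)\|_{\dot{B}^{d/p}_{p,1}}\lesssim\langle\tau\rangle^{-\frac{s_1}{2}-\frac{d}{4}}\mathcal{D}_p(t),
\]
obtained by splitting $a=a^\ell+a^h$ and using the low-frequency part of $\mathcal{D}_p$ at index $\tfrac{d}{2}$ together with the high-frequency weight $\langle\tau\rangle^\alpha$. Pairing this with $\|\nabla\theta^\ell\|_{\dot{B}^{-s_1}_{2,1}}\lesssim\langle\tau\rangle^{-1/2}\mathcal{D}_p(t)$ gives $\delta=\tfrac{s_1}{2}+\tfrac{d}{4}+\tfrac12$, which is $>1$ precisely when $s_1>1-\tfrac{d}{2}$ and $\geq\tfrac{s_1+s}{2}$ precisely when $s\leq\tfrac{d}{2}+1$. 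The rest of the lemma is run on three tailored product inequalities \eqref{Eq:3.6}--\eqref{Eq:3.8} (all instances of Proposition~\ref{Prop2.3}) combined with the decay catalogue \eqref{Eq:3.9}--\eqref{Eq:3.13}; Proposition~\ref{Prop2.4} is not needed here and is reserved for Lemma~\ref{Lem3.3}, where the second factor is genuinely high-frequency.
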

\begin{proof}
Let us first claim that the following three non classical product inequalities
\begin{eqnarray}\label{Eq:3.6}
&&\|FG\|_{\dot{B}^{-s_{1}}_{2,\infty}}
\lesssim\|F\|_{\dot{B}^{\frac{d}{p}}_{p,1}}\|G\|_{\dot{B}^{-s_{1}}_{2,1}},\\
\label{Eq:3.7}
&&\|FG\|_{\dot{B}^{\frac{d}{p}-\frac{d}{2}-s_{1}}_{2,\infty}}
\lesssim\|F\|_{\dot{B}^{\frac{d}{p}-\frac{d}{2}-s_{1}}_{p,1}}\|G\|_{\dot{B}^{\frac{d}{p}}_{2,1}},\\
\label{Eq:3.8}
&&\|FG\|_{\dot{B}^{\frac{d}{p}-\frac{d}{2}-s_{1}}_{2,\infty}}
\lesssim\|F\|_{\dot{B}^{\frac{d}{p}-1}_{p,1}}\|G\|_{\dot{B}^{\frac{d}{p}-\frac{d}{2}-s_{1}+1}_{2,1}}
\end{eqnarray}
for $1-\frac{d}{2}<s_{1}\leq s_{0}$ and $p$ satisfying \eqref{Eq:1.9}. Indeed, the interested reader is referred to \cite{SX,XJ} for the proofs of \eqref{Eq:3.6}-\eqref{Eq:3.7}. It follows from Proposition \ref{Prop2.3} with $\sigma_{1}=\frac{d}{p}-1$, $\sigma_{2}=\frac{d}{p}-\frac{d}{2}-s_{1}+1$, $p_{1}=p$ and  $p_{2}=2$ that
\begin{equation*}
\|FG\|_{\dot{B}^{\frac{d}{p}-\frac{d}{2}-s_{1}}_{2,\infty}}
\lesssim \|F\|_{\dot{B}^{\frac{d}{p}-1}_{p,1}}
\|G\|_{\dot{B}^{\frac{d}{p}-\frac{d}{2}-s_{1}+1}_{2,\infty}}.
\end{equation*}
Hence, \eqref{Eq:3.8} directly stems from the embedding  $\dot{B}^{\frac{d}{p}-\frac{d}{2}-s_{1}+1}_{2,1}\hookrightarrow \dot{B}^{\frac{d}{p}-\frac{d}{2}-s_{1}+1}_{2,\infty}$.

On the other hand, due to Proposition \ref{Prop2.1}, \eqref{Eq:1.13} and the relations $-s_{1}<\frac{d}{2}-1<\frac{d}{2}<\frac{d}{2}+1$ and $\alpha\triangleq s_{1}+\frac{d}{2}+\frac{1}{2}-\varepsilon\geq\frac{s_{1}}{2}+\frac{d}{4}$ for small enough $\varepsilon>0$, we infer that
\begin{equation}\label{Eq:3.9}
 \sup_{\tau \in [0,t]}\langle\tau\rangle^{\frac{s_{1}}{2}+\frac{d}{4}}\|(a,v^{\ell},\theta^{\ell})(\tau)\|_{\dot{B}^{\frac{d}{p}}_{p,1}}+ \sup_{\tau \in [0,t]}\langle\tau\rangle^{\frac{s_{1}}{2}+\frac{d}{4}-\frac{1}{2}}\|(a,v,\theta^{\ell})(\tau)\|_{\dot{B}^{\frac{d}{p}-1}_{p,1}} \lesssim \mathcal{D}_{p}(t)
\end{equation}
and also that, thanks to $-s_{1}<1-s_{1}<2-s_{1}<\frac{d}{2}+1$,
\begin{equation}\label{Eq:3.10}
\|\nabla (a^{\ell},v^{\ell},\theta^{\ell})(\tau)\|_{\dot{B}^{-s_{1}}_{2,1}}
\lesssim  \langle\tau\rangle^{-\frac{1}{2}} \mathcal{D}_{p}(\tau), \ \ \|\nabla^{2}\theta^{\ell}(\tau)\|_{\dot{B}^{-s_{1}}_{2,1}} \lesssim  \langle\tau\rangle^{-1}\mathcal{D}_{p}(\tau) \ \ \hbox{for all} \ \ \tau\geq0.
\end{equation}
Observe that \eqref{Eq:1.13} and the relations $-s_{1}<\frac{d}{2}-1\leq \frac{d}{p}<\frac{d}{p}+1\leq\frac{d}{2}+1$ and $-s_{1}<\frac{d}{p}-\frac{d}{2}-s_{1}+2<\frac{d}{p}+1\leq \frac{d}{2}+1$ for $2 \leq p\leq \frac{2d}{d-2}$ and $s_{1}$ satisfying \eqref{Eq:1.10}, we obviously have
\begin{eqnarray}
\label{Eq:3.11}
&&\|\theta^{\ell}(\tau)\|_{\dot{B}^{\frac{d}{p}}_{2,1}}
\lesssim \langle\tau\rangle^{-\frac{s_{1}}{2}-\frac{d}{2p}} \mathcal{D}_{p}(\tau), \ \ \ \ \|\nabla (v^{\ell},\theta^{\ell})(\tau)\|_{\dot{B}^{\frac{d}{p}}_{2,1}}
\lesssim \langle\tau\rangle^{-\frac{s_{1}}{2}-\frac{d}{2p}-\frac{1}{2}} \mathcal{D}_{p}(\tau), \\
\label{Eq:3.12}
&&\| \nabla (v^{\ell},\theta^{\ell})(\tau)\|_{\dot{B}^{\frac{d}{p}-\frac{d}{2}-s_{1}+1}_{2,1}}\lesssim  \langle\tau\rangle^{-\frac{d}{2p}+\frac{d}{4}-1}\mathcal{D}_{p}(\tau)
\end{eqnarray}
and
\begin{equation}\label{Eq:3.13}
\|(\nabla a^{h},v^{h})(\tau)\|_{\dot{B}^{\frac{d}{p}-1}_{p,1}}+\| \theta^{h}(\tau)\|_{\dot{B}^{\frac{d}{p}-2}_{p,1}}
\lesssim
 \langle\tau\rangle^{-\alpha}\mathcal{D}_{p}(\tau) \ \ \hbox{for all} \ \ \tau\geq0.
\end{equation}

Now, let us begin with proving Lemma \ref{Lem3.2}. To handle the term with $\mathcal{G}_{5}^{\ell}=-K_{2}(a)\nabla \theta^{\ell}$, we write that, thanks to Proposition \ref{Prop2.5} together with \eqref{Eq:3.6}, \eqref{Eq:3.9} and \eqref{Eq:3.10},
\begin{eqnarray*}
\int_{0}^{t}\langle t-\tau\rangle^{-\frac{s_{1}+s}{2}}\|K_{2}(a)\nabla \theta^{\ell}\|^{\ell}_{\dot{B}^{-s_{1}}_{2,\infty}}d\tau
&\lesssim & \int_{0}^{t}\langle t-\tau\rangle^{-\frac{s_{1}+s}{2}}\|a\|_{\dot{B}^{\frac{d}{p}}_{p,1}}
\|\nabla \theta^{\ell}\|_{\dot{B}^{-s_{1}}_{2,1}}d\tau\\
&\lesssim& \mathcal{D}^{2}_{p}(t)\int_{0}^{t}\langle t-\tau\rangle^{-\frac{s_{1}+s}{2}}
\langle \tau\rangle^{-\frac{s_{1}}{2}-\frac{d}{4}-\frac{1}{2}}d\tau.
\end{eqnarray*}
According to $\frac{s_{1}}{2}+\frac{d}{4}+\frac{1}{2}>1$ and $\frac{s_{1}}{2}+\frac{d}{4}+\frac{1}{2}\geq\frac{s_{1}+s}{2}$ for $s_{1}$ satisfying \eqref{Eq:1.10} and $s\leq\frac{d}{2}+1$, inequality \eqref{Eq:3.2} ensures that
\begin{equation*}
\int_{0}^{t}\langle t-\tau\rangle^{-\frac{s_{1}+s}{2}}\|K_{2}(a)\nabla \theta^{\ell}\|^{\ell}_{\dot{B}^{-s_{1}}_{2,\infty}}d\tau
\lesssim \langle t\rangle^{-\frac{s_{1}+s}{2}} \mathcal{D}^{2}_{p}(t).
\end{equation*}
The terms $\tilde{K}_{1}\left( a\right) \mathrm{div}\, v^{\ell}$ and $k_{1}(a,\theta^{\ell})$ (that is, the term $k_{1}(a,\theta)$ is of the type $K(a) \Delta \theta$ with $K(0)=0$)
may be treated at a similar way (use \eqref{Eq:3.6}, \eqref{Eq:3.9}, \eqref{Eq:3.10}, \eqref{Eq:3.2} and Proposition \ref{Prop2.5}), so we feel free to skip them for brevity. Let us decompose
\begin{equation*}
\theta^{\ell} \nabla K_{3}(a)=\theta^{\ell}  K'_{3}(a) \nabla a^{\ell}+\theta^{\ell}  K'_{3}(a) \nabla a^{h} \ \ \hbox{with} \ \ K'_{3}(a)=\chi_{0} \sqrt{\frac{\mathcal{T}_{\infty}}{C_{v}}}\frac{\pi'_{1}\left(\varrho_{\infty}(1+a)\right)}{1+a},
\end{equation*}
\begin{equation*}
v \cdot \nabla \theta^{\ell}=v^{\ell} \cdot \nabla \theta^{\ell}+v^{h} \cdot \nabla \theta^{\ell} \ \ \hbox{and} \ \ \tilde{K}_{2}(a) \,\theta \,\mathrm{div}\, v^{\ell}=\tilde{K}_{2}(a)\theta^{\ell} \mathrm{div}\, v^{\ell}+\tilde{K}_{2}(a)\theta^{h} \mathrm{div}\, v^{\ell}.
\end{equation*}
Regarding the term with $\theta^{\ell} K'_{3}(a) \nabla a^{\ell}$, it follows from Propositions \ref{Prop2.2}, \ref{Prop2.5}, \eqref{Eq:3.6}, \eqref{Eq:3.9}, \eqref{Eq:3.10}, \eqref{Eq:3.3} and \eqref{Eq:3.2} that
\begin{eqnarray*}
&&\int_{0}^{t}\langle t-\tau\rangle^{-\frac{s_{1}+s}{2}}\|\theta^{\ell}  K'_{3}(a) \nabla a^{\ell}\|^{\ell}_{\dot{B}^{-s_{1}}_{2,\infty}}d\tau \\
&\lesssim& \int_{0}^{t}\langle t-\tau\rangle^{-\frac{s_{1}+s}{2}} (1+\|a\|_{\dot{B}^{\frac{d}{p}}_{p,1}}) \|\theta^{\ell}\|_{\dot{B}^{\frac{d}{p}}_{p,1}}
\|\nabla a^{\ell}\|_{\dot{B}^{-s_{1}}_{2,1}} d\tau \\
&\lesssim& \mathcal{D}^{2}_{p}(t) \int_{0}^{t}\langle t-\tau\rangle^{-\frac{s_{1}+s}{2}} \langle \tau\rangle^{-\frac{s_{1}}{2}-\frac{d}{4}-\frac{1}{2}} d\tau
\lesssim \langle t\rangle^{-\frac{s_{1}+s}{2}} \mathcal{D}^{2}_{p}(t).
\end{eqnarray*}
Bounding $v^{\ell} \cdot \nabla \theta^{\ell}$ and $\tilde{K}_{2}\left( a\right) \theta^{\ell} \mathrm{div}\, v^{\ell}$ essentially follows from the same procedure as $\theta^{\ell}  K'_{3}(a) \nabla a^{\ell}$, we thus omit them. To handle the term with $\theta^{\ell} K'_{3}(a) \nabla a^{h}$, we note that, owing to \eqref{Eq:3.7}, \eqref{Eq:3.3} and Propositions \ref{Prop2.2}, \ref{Prop2.5},
\begin{eqnarray*}
\int_{0}^{t}\langle t-\tau\rangle^{-\frac{s_{1}+s}{2}}\|\theta^{\ell}  K'_{3}(a) \nabla a^{h}\|^{\ell}_{\dot{B}^{-s_{1}}_{2,\infty}}d\tau
&\lesssim&
\int_{0}^{t}\langle t-\tau\rangle^{-\frac{s_{1}+s}{2}}\|\theta^{\ell}  K'_{3}(a) \nabla a^{h}\|^{\ell}_{\dot{B}^{\frac{d}{p}-\frac{d}{2}-s_{1}}_{2,\infty}}d\tau\\
&\lesssim& \int_{0}^{t}\langle t-\tau\rangle^{-\frac{s_{1}+s}{2}} \| \theta^{\ell}\|_{\dot{B}^{\frac{d}{p}}_{2,1}} \|\nabla a^{h} \|_{\dot{B}^{\frac{d}{p}-\frac{d}{2}-s_{1}}_{p,1}}d\tau\\
&\lesssim& \int_{0}^{t}\langle t-\tau\rangle^{-\frac{s_{1}+s}{2}} \| \theta^{\ell}\|_{\dot{B}^{\frac{d}{p}}_{2,1}} \|\nabla a^{h} \|_{\dot{B}^{\frac{d}{p}-1}_{p,1}}d\tau,
\end{eqnarray*}
where we used the relations $s_{1}\leq s_{1}+\frac{d}{2}-\frac{d}{p}$ ($p\geq 2$) and $\frac{d}{p}-\frac{d}{2}-s_{1}<\frac{d}{p}-1$ for $s_{1}$ fulfilling \eqref{Eq:1.10}.
According to $\frac{s_{1}}{2}+\frac{d}{2p}+\alpha>\frac{s_{1}}{2}+\frac{d}{4}+\frac{1}{2}>1$ (as $\alpha>\frac{d}{4}+\frac{1}{2}-\frac{d}{2p}$ for sufficiently small $\varepsilon>0$) and $\frac{s_{1}}{2}+\frac{d}{2p}+\alpha >\frac{s_{1}+s}{2}$ for all $s\leq \frac{d}{2}+1$ as well as \eqref{Eq:3.11}, \eqref{Eq:3.13} and \eqref{Eq:3.2}, we arrive at
\begin{eqnarray*}
\int_{0}^{t}\langle t-\tau\rangle^{-\frac{s_{1}+s}{2}}\|\theta^{\ell}  K'_{3}(a) \nabla a^{h}\|^{\ell}_{\dot{B}^{-s_{1}}_{2,\infty}}d\tau
&\lesssim& \mathcal{D}^{2}_{p}(t) \int_{0}^{t}\langle t-\tau\rangle^{-\frac{s_{1}+s}{2}} \langle \tau\rangle^{-\frac{s_{1}}{2}-\frac{d}{2p}-\alpha} d\tau \\
&\lesssim& \langle t\rangle^{-\frac{s_{1}+s}{2}} \mathcal{D}^{2}_{p}(t).
\end{eqnarray*}
For the term $v^{h} \cdot \nabla \theta^{\ell}$,
we take advantage of \eqref{Eq:3.7}, \eqref{Eq:3.11}, \eqref{Eq:3.13}, \eqref{Eq:3.2} and the relations $s_{1}\leq s_{1}+\frac{d}{2}-\frac{d}{p}$ ($p\geq 2$) and $\frac{d}{p}-\frac{d}{2}-s_{1}<\frac{d}{p}-1$ to conclude that we still have
\begin{eqnarray*}
\int_{0}^{t}\langle t-\tau\rangle^{-\frac{s_{1}+s}{2}}\|v^{h} \cdot \nabla \theta^{\ell}\|^{\ell}_{\dot{B}^{-s_{1}}_{2,\infty}} d \tau
&\lesssim& \int_{0}^{t}\langle t-\tau\rangle^{-\frac{s_{1}+s}{2}} \|v^{h}\|_{\dot{B}^{\frac{d}{p}-\frac{d}{2}-s_{1}}_{p,1}}\|\nabla \theta^{\ell}\|_{\dot{B}^{\frac{d}{p}}_{2,1}} d\tau \\
&\lesssim& \int_{0}^{t}\langle t-\tau\rangle^{-\frac{s_{1}+s}{2}} \|v^{h}\|_{\dot{B}^{\frac{d}{p}-1}_{p,1}}\|\nabla \theta^{\ell}\|_{\dot{B}^{\frac{d}{p}}_{2,1}} d\tau \\
&\lesssim& \mathcal{D}^{2}_{p}(t) \int_{0}^{t}\langle t-\tau\rangle^{-\frac{s_{1}+s}{2}} \langle \tau\rangle^{-\frac{s_{1}}{2}-\frac{d}{2p}-\frac{1}{2}-\alpha} d\tau \\
&\lesssim& \langle t\rangle^{-\frac{s_{1}+s}{2}} \mathcal{D}^{2}_{p}(t).
\end{eqnarray*}
To bound the term corresponding to $\tilde{K}_{2}( a)\,\theta^{h} \mathrm{div}\, v^{\ell}$, we observe that applying \eqref{Eq:3.7} with $s_{1}=s_{0}=\frac{2d}{p}-\frac{d}{2}$ yields
\begin{equation}\label{Eq:3.14}
\|FG\|_{\dot{B}^{-\frac{d}{p}}_{2,\infty}}
\lesssim\|F\|_{\dot{B}^{-\frac{d}{p}}_{p,1}}\|G\|_{\dot{B}^{\frac{d}{p}}_{2,1}}.
\end{equation}
Note that $s_{1}\leq s_{0}\leq\frac{d}{p}$ and $-\frac{d}{p}<\frac{d}{p}-2$ ($p<d$), we get from \eqref{Eq:3.14}, \eqref{Eq:3.3} and Propositions \ref{Prop2.2}, \ref{Prop2.5} that
\begin{eqnarray*}
\int_{0}^{t}\langle t-\tau\rangle^{-\frac{s_{1}+s}{2}}\|\tilde{K}_{2}(a)\theta^{h} \mathrm{div}\, v^{\ell}\|^{\ell}_{\dot{B}^{-s_{1}}_{2,\infty}}d\tau
&\lesssim &
\int_{0}^{t}\langle t-\tau\rangle^{-\frac{s_{1}+s}{2}}\|\tilde{K}_{2}(a)\theta^{h} \mathrm{div}\, v^{\ell}\|^{\ell}_{\dot{B}^{-\frac{d}{p}}_{2,\infty}}d\tau\\
&\lesssim & \int_{0}^{t}\langle t-\tau\rangle^{-\frac{s_{1}+s}{2}}\|\theta^{h}\|_{\dot{B}^{-\frac{d}{p}}_{p,1}}
\|\mathrm{div}\, v^{\ell}\|_{\dot{B}^{\frac{d}{p}}_{2,1}}d\tau\\
&\lesssim & \int_{0}^{t}\langle t-\tau\rangle^{-\frac{s_{1}+s}{2}}\|\theta^{h}\|_{\dot{B}^{\frac{d}{p}-2}_{p,1}}
\|\mathrm{div}\, v^{\ell}\|_{\dot{B}^{\frac{d}{p}}_{2,1}}d\tau.
\end{eqnarray*}
In light of \eqref{Eq:3.11}, \eqref{Eq:3.13} and \eqref{Eq:3.2}, we arrive at
\begin{eqnarray*}
\int_{0}^{t}\langle t-\tau\rangle^{-\frac{s_{1}+s}{2}}
\|\tilde{K}_{2}(a)\theta^{h} \mathrm{div}\, v^{\ell}\|^{\ell}_{\dot{B}^{-s_{1}}_{2,\infty}}d\tau
&\lesssim& \mathcal{D}^{2}_{p}(t)\int_{0}^{t}\langle t-\tau\rangle^{-\frac{s_{1}+s}{2}}
\langle \tau\rangle^{-\alpha-\frac{s_{1}}{2}-\frac{d}{2p}-\frac{1}{2}}d\tau\\
&\lesssim& \langle t\rangle^{-\frac{s_{1}+s}{2}} \mathcal{D}^{2}_{p}(t).
\end{eqnarray*}
Let us next look at the term with $\frac{\tilde{\kappa}'(a)}{(1+a)\nu}\nabla a \cdot \nabla \theta^{\ell}$.  Denote by $H(a)$ the smooth function fulfilling $H'(a)=\frac{\tilde{\kappa}'(a)}{(1+a)\nu}$ and $H(0)=0$ so that $\nabla H(a)=\frac{\tilde{\kappa}'(a)}{(1+a)\nu} \nabla a$. So it suffices to estimate the term $\nabla H(a)\cdot \nabla \theta^{\ell}$. With the aid of the fact $s_{1}\leq s_{1}+\frac{d}{2}-\frac{d}{p}$, \eqref{Eq:3.8}, \eqref{Eq:3.9}, \eqref{Eq:3.12}, \eqref{Eq:3.2} and Proposition \ref{Prop2.5}, we end up with
\begin{eqnarray*}
\int_{0}^{t}\langle t-\tau\rangle^{-\frac{s_{1}+s}{2}}\|\nabla H(a) \cdot \nabla \theta^{\ell}\|^{\ell}_{\dot{B}^{-s_{1}}_{2,\infty}} d \tau
&\lesssim& \int_{0}^{t}\langle t-\tau\rangle^{-\frac{s_{1}+s}{2}} \|a\|_{\dot{B}^{\frac{d}{p}}_{p,1}}\|\nabla \theta^{\ell}\|_{\dot{B}^{\frac{d}{p}-\frac{d}{2}-s_{1}+1}_{2,1}} d\tau \\
&\lesssim& \mathcal{D}^{2}_{p}(t) \int_{0}^{t}\langle t-\tau\rangle^{-\frac{s_{1}+s}{2}} \langle \tau\rangle^{-\frac{s_{1}}{2}-\frac{d}{2p}-1} d\tau\\
&\lesssim& \langle t\rangle^{-\frac{s_{1}+s}{2}} \mathcal{D}^{2}_{p}(t),
\end{eqnarray*}
where the relation $p\leq \frac{2d}{d-2}$ ensures $\frac{s_{1}}{2}+\frac{d}{2p}+1\geq \frac{s_{1}}{2}+\frac{d}{4}+\frac{1}{2}>1$ and $\frac{s_{1}}{2}+\frac{d}{2p}+1\geq \frac{s_{1}+s}{2}$ for all $s\leq \frac{d}{2}+1$. We finally decompose $k_{2}(a,\nabla v,\nabla v^{\ell})=k_{2}(a,\nabla v^{\ell},\nabla v^{\ell})+k_{2}(a,\nabla v^{h},\nabla v^{\ell})$.
For the term with $k_{2}(a,\nabla v^{\ell},\nabla v^{\ell})$, it follows from \eqref{Eq:3.8}, \eqref{Eq:3.9}, \eqref{Eq:3.12}, \eqref{Eq:3.3}, \eqref{Eq:3.2}, Propositions \ref{Prop2.2}, \ref{Prop2.5} and the relation $s_{1}\leq s_{1}+\frac{d}{2}-\frac{d}{p}$ that
\begin{eqnarray*}
&&\int_{0}^{t}\langle t-\tau\rangle^{-\frac{s_{1}+s}{2}}\|k_{2}(a,\nabla v^{\ell},\nabla v^{\ell})\|^{\ell}_{\dot{B}^{-s_{1}}_{2,\infty}}d\tau\\
&\lesssim & \int_{0}^{t}\langle t-\tau\rangle^{-\frac{s_{1}+s}{2}}\| v^{\ell}\|_{\dot{B}^{\frac{d}{p}}_{p,1}}\|\nabla v^{\ell}\|_{\dot{B}^{\frac{d}{p}-\frac{d}{2}-s_{1}+1}_{2,1}}d\tau\\
&\lesssim& \mathcal{D}^{2}_{p}(t) \int_{0}^{t}\langle t-\tau\rangle^{-\frac{s_{1}+s}{2}} \langle \tau\rangle^{-\frac{s_{1}}{2}-\frac{d}{2p}-1} d\tau\\
&\lesssim& \langle t\rangle^{-\frac{s_{1}+s}{2}} \mathcal{D}^{2}_{p}(t).
\end{eqnarray*}
Keeping in mind that the relations $s_{1}\leq s_{0}\leq\frac{d}{p}$ and $-\frac{d}{p}<\frac{d}{p}-2$ ($p<d$) and using  \eqref{Eq:3.11}, \eqref{Eq:3.13}, \eqref{Eq:3.14}, \eqref{Eq:3.3}, \eqref{Eq:3.2} and Propositions \ref{Prop2.2}, \ref{Prop2.5}, we conclude that
\begin{eqnarray*}
\int_{0}^{t}\langle t-\tau\rangle^{-\frac{s_{1}+s}{2}}\|k_{3}(a,\nabla v^{h},\nabla v^{\ell})\|^{\ell}_{\dot{B}^{-s_{1}}_{2,\infty}}d\tau
&\lesssim & \int_{0}^{t}\langle t-\tau\rangle^{-\frac{s_{1}+s}{2}}\|\nabla v^{h}\|_{\dot{B}^{-\frac{d}{p}}_{p,1}}
\|\nabla v^{\ell}\|_{\dot{B}^{\frac{d}{p}}_{2,1}}d\tau\\
&\lesssim & \int_{0}^{t}\langle t-\tau\rangle^{-\frac{s_{1}+s}{2}}\|\nabla v^{h}\|_{\dot{B}^{\frac{d}{p}-2}_{p,1}}
\|\nabla v^{\ell}\|_{\dot{B}^{\frac{d}{p}}_{2,1}}d\tau\\
&\lesssim& \mathcal{D}^{2}_{p}(t)\int_{0}^{t}\langle t-\tau\rangle^{-\frac{s_{1}+s}{2}}
\langle \tau\rangle^{-\alpha-\frac{s_{1}}{2}-\frac{d}{2p}-\frac{1}{2}}d\tau \\
&\lesssim& \langle t\rangle^{-\frac{s_{1}+s}{2}} \mathcal{D}^{2}_{p}(t).
\end{eqnarray*}
Hence, putting all estimates together leads to Lemma \ref{Lem3.2}.
\end{proof}
In what follows, let us bound those nonlinear terms in $\mathcal{G}^{h}_{5}$, $\mathcal{G}^{h}_{6}$ and $k^{h}$, precisely
\begin{eqnarray*}
&K_{2}(a)\nabla \theta^{h}, \ \  \theta^{h} \nabla K_{3}(a), \ \ v \cdot \nabla \theta^{h}, \ \ \widetilde{K}_{1}(a) \mathrm{div}\,v^{h}, \\
&\tilde{K}_{2}(a)\theta \mathrm{div}\, v^{h}, \ \ \frac{\tilde{\kappa}'(a)}{(1+a)\nu} \nabla a \cdot \nabla \theta^{h}, \ \ k_{1}(a,\theta^{h}), \ \ k_{2}(a,\nabla v, \nabla v^{h}).
\end{eqnarray*}
In terms of \eqref{Eq:1.13}, we claim that
\begin{equation}\label{Eq:3.15}
\|\theta^{h}(\tau)\|_{\dot{B}^{\frac{d}{p}-1}_{p,1}}+\|(v^{h},\nabla v^{h},\theta^{h})(\tau)\|_{\dot{B}^{\frac{d}{p}}_{p,1}}
\lesssim \tau^{-\alpha}\mathcal{D}_{p}(\tau) \ \ \hbox{for all} \ \ \tau\geq 0.
\end{equation}
\begin{lem}\label{Lem3.3}
	If $p$ satisfies  \eqref{Eq:1.9}, then it holds that for all $t\geq 0$,
	\begin{equation}\label{Eq:3.16}
	\int_{0}^{t}\langle t-\tau\rangle^{-\frac{s_{1}+s}{2}} \|(\mathcal{G}_{5}^{h},\mathcal{G}_{6}^{h},k^{h})(\tau)\|^{\ell}_{\dot{B}^{-s_{1}}_{2,\infty}}d\tau\lesssim \langle t\rangle^{-\frac{s_{1}+s}{2}}\left(\mathcal{D}^{2}_{p}(t)+\mathcal{X}^{2}_{p}(t)\right),
	\end{equation}
provided that $-s_{1}<s\leq \frac{d}{2}+1$.
\end{lem}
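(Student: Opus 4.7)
The plan is to mirror the strategy of Lemma \ref{Lem3.2}: for each of the eight nonlinear contributions in $(\mathcal{G}_{5}^{h},\mathcal{G}_{6}^{h},k^{h})$ one first uses Proposition \ref{Prop2.5} together with the smallness \eqref{Eq:3.3} and $K_{2}(0)=K_{3}(0)=\widetilde{K}_{1}(0)=\widetilde{K}_{2}(0)=\widetilde{\kappa}(0)=0$ to replace the composite functions by smooth-function multiples of $a$, and thus reduce matters to products of the form $FG^{h}$ (or $F^{h}G$) estimated in the low-frequency piece of $\dot{B}^{-s_{1}}_{2,\infty}$. The ``non-standard'' product estimates \eqref{Eq:3.6}-\eqref{Eq:3.8} (plus the corollary \eqref{Eq:3.14} recovered from \eqref{Eq:3.7} at $s_{1}=s_{0}=\frac{2d}{p}-\frac{d}{2}$) will be the main functional inputs, while the time decay will be drawn from \eqref{Eq:3.9}-\eqref{Eq:3.13} for the low-frequency factors and from \eqref{Eq:3.15} for the high-frequency ones.

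For the ``plain'' contributions $K_{2}(a)\nabla\theta^{h}$, $\widetilde{K}_{1}(a)\,\mathrm{div}\,v^{h}$ and $k_{1}(a,\theta^{h})$, I would apply \eqref{Eq:3.7} with $a$ placed in $\dot{B}^{\frac{d}{p}}_{p,1}$; the remaining factor then sits in $\dot{B}^{\frac{d}{p}-1}_{p,1}$ (or $\dot{B}^{\frac{d}{p}}_{p,1}$) and is controlled by \eqref{Eq:3.15}. The resulting integrand carries the weight $\langle\tau\rangle^{-\frac{s_{1}}{2}-\frac{d}{2p}}\tau^{-\alpha}$, whose total exponent $\frac{s_{1}}{2}+\frac{d}{2p}+\alpha$ is, for $\varepsilon>0$ small, strictly larger than $1$ and also $\geq\frac{s_{1}+s}{2}$ for every $s\leq\frac{d}{2}+1$; inequality \eqref{Eq:3.2} then produces the claimed $\langle t\rangle^{-\frac{s_{1}+s}{2}}$ decay. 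The only caveat is the singularity of $\tau^{-\alpha}$ at $\tau=0$: splitting $\int_{0}^{t}=\int_{0}^{1}+\int_{1}^{t}$, one bounds the first piece directly through the energy functional $\mathcal{X}_{p}^{2}(t)$ (which is exactly why $\mathcal{X}_{p}^{2}$ appears in \eqref{Eq:3.16}) and the second through \eqref{Eq:3.2}.

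The more delicate terms $\theta^{h}\nabla K_{3}(a)$, $v\cdot\nabla\theta^{h}$, $\widetilde{K}_{2}(a)\theta\,\mathrm{div}\,v^{h}$, $\frac{\widetilde{\kappa}'(a)}{(1+a)\nu}\nabla a\cdot\nabla\theta^{h}$ and $k_{2}(a,\nabla v,\nabla v^{h})$ involve either two high-frequency factors or the rougher $\theta^{h}\in\dot{B}^{\frac{d}{p}-2}_{p,1}$. For these I would further decompose the companion field ($v=v^{\ell}+v^{h}$, $\theta=\theta^{\ell}+\theta^{h}$, $\nabla a=\nabla a^{\ell}+\nabla a^{h}$) and match each piece with the most economical product estimate: \eqref{Eq:3.6}-\eqref{Eq:3.8} whenever one factor enjoys $\dot{B}^{\frac{d}{p}}_{p,1}$ regularity, and \eqref{Eq:3.14} (which exploits $\dot{B}^{\frac{d}{p}-2}_{p,1}\hookrightarrow\dot{B}^{-\frac{d}{p}}_{p,1}$, valid since $p<d$) for the purely high$\times$high couplings such as $v^{h}\cdot\nabla\theta^{h}$ or $\theta^{h}\,\mathrm{div}\,v^{h}$. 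The quadratic term $k_{2}(a,\nabla v,\nabla v^{h})$ is handled by the same split of $\nabla v$, with the cross piece $k_{2}(a,\nabla v^{\ell},\nabla v^{h})$ absorbed via \eqref{Eq:3.8}. In every case, combining the decay of the two factors yields a total exponent at least $\alpha+\frac{s_{1}}{2}+\frac{d}{2p}$, well within the range allowed by \eqref{Eq:3.2}.

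The principal obstacle I anticipate is the exponent bookkeeping for the $\theta^{h}$-terms at the lower end $s_{1}\to 1-\frac{d}{2}$, where the convergence requirement ``$\delta>1$'' in \eqref{Eq:3.2} is essentially saturated. Each of the eight contributions must be checked to ensure that the combined time decay both exceeds $1$ (for integrability) and dominates $\frac{s_{1}+s}{2}$ for every admissible $s\in(-s_{1},\frac{d}{2}+1]$, and it is precisely this borderline arithmetic that forces the choice $\alpha=s_{1}+\frac{d}{2}+\frac{1}{2}-\varepsilon$ with a strictly positive $\varepsilon$. Once this verification is carried out term by term and the bounds are summed, estimate \eqref{Eq:3.16} follows.
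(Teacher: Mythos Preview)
Your plan has a genuine gap at the level of the product estimates. The inequalities \eqref{Eq:3.6}--\eqref{Eq:3.8} and \eqref{Eq:3.14} that you intend to recycle from Lemma~\ref{Lem3.2} all place one of the two factors in an $L^{2}$-based Besov space (either $\dot B^{-s_{1}}_{2,1}$, $\dot B^{\frac{d}{p}}_{2,1}$ or $\dot B^{\frac{d}{p}-\frac{d}{2}-s_{1}+1}_{2,1}$). In Lemma~\ref{Lem3.2} this was harmless because the accompanying factor was always a \emph{low-frequency} piece ($\theta^{\ell}$, $v^{\ell}$, $\nabla a^{\ell}$), for which $L^{2}$-based control is available through the first supremum in $\mathcal D_{p}(t)$. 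In the present lemma the high-frequency inputs $\theta^{h}$, $v^{h}$, $\nabla v^{h}$ are controlled \emph{only} in $L^{p}$-based norms via \eqref{Eq:3.13} and \eqref{Eq:3.15}, and the companion factor (e.g. $K_{2}(a)$, $v$, $\nabla K_{3}(a)$) cannot be pushed into an $L^{2}$-based space either, since $a^{h}$ and $v^{h}$ lie only in $\dot B^{\frac{d}{p}}_{p,1}$ and there is no Bernstein inequality taking $L^{p}$ back to $L^{2}$ at high frequencies. Thus none of \eqref{Eq:3.6}--\eqref{Eq:3.8}, \eqref{Eq:3.14} applies to products such as $K_{2}(a)\nabla\theta^{h}$ or $v^{h}\!\cdot\!\nabla\theta^{h}$ with the bounds at hand.

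The paper closes this gap with a different tool, namely Proposition~\ref{Prop2.4}, which yields
\[
\|FG^{h}\|^{\ell}_{\dot B^{-s_{1}}_{2,\infty}}
\lesssim\|FG^{h}\|^{\ell}_{\dot B^{-s_{0}}_{2,\infty}}
\lesssim\|F\|_{\dot B^{\frac{d}{p}-1}_{p,1}}\|G^{h}\|_{\dot B^{1-\frac{d}{p}}_{p,1}}
\lesssim\|F\|_{\dot B^{\frac{d}{p}-1}_{p,1}}\|G^{h}\|_{\dot B^{\frac{d}{p}-1}_{p,1}}
\]
(this is \eqref{Eq:3.17}), where \emph{both} factors live in $L^{p}$-based spaces. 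This is precisely what is needed for $K_{2}(a)\nabla\theta^{h}$, $v\!\cdot\!\nabla\theta^{h}$, $\widetilde K_{1}(a)\,\mathrm{div}\,v^{h}$, $\theta^{h}\nabla K_{3}(a)$, $\widetilde K_{2}(a)\theta\,\mathrm{div}\,v^{h}$ and $k_{2}(a,\nabla v,\nabla v^{h})$; the $[0,1]$--$[1,t]$ splitting that you describe then finishes each estimate. A second subtlety you miss concerns $k_{1}(a,\theta^{h})$: inequality \eqref{Eq:3.17} feeds $\Delta\theta^{h}$ into $\dot B^{1-\frac{d}{p}}_{p,1}$, hence demands $\theta^{h}\in\dot B^{3-\frac{d}{p}}_{p,1}$, which is dominated by $\|\theta^{h}\|_{\dot B^{\frac{d}{p}}_{p,1}}$ only when $p\le\frac{2d}{3}$. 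For $p>\frac{2d}{3}$ the paper invokes \eqref{Eq:2.4} with $\sigma=2-\frac{d}{p}$ (see \eqref{Eq:3.20}), together with the embeddings $\dot B^{\frac{d}{p}}_{2,1}\hookrightarrow L^{p^{*}}$ and $\dot B^{s_{0}}_{p,1}\hookrightarrow L^{p^{*}}$, to recover the estimate. Without Proposition~\ref{Prop2.4} (and this case distinction for $k_{1}$), the argument you outline does not go through.
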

\begin{proof}
In order to prove \eqref{Eq:3.16}, we shall present the following inequality
\begin{equation}\label{Eq:3.17}
\|FG^{h}\|^{\ell}_{\dot{B}^{-s_{1}}_{2,\infty}}\lesssim\|FG^{h}\|^{\ell}_{\dot{B}^{-s_{0}}_{2,\infty}}\lesssim\|F\|_{\dot{B}^{\frac{d}{p}-1}_{p,1}}\|G^{h}\|_{\dot{B}^{1-\frac{d}{p}}_{p,1}}\lesssim\|F\|_{\dot{B}^{\frac{d}{p}-1}_{p,1}}\|G^{h}\|_{\dot{B}^{\frac{d}{p}-1}_{p,1}}
\end{equation}
for $1-\frac{d}{2}<s_{1}\leq s_{0}$ and $p$ satisfying \eqref{Eq:1.9}.
The reader is referred to \cite{SX} for the proof of \eqref{Eq:3.17}. To bound the term involving $K_{2}(a)\nabla \theta^{h}$, we see that, thanks to \eqref{Eq:3.17} and Proposition \ref{Prop2.5},
\begin{eqnarray*}
\int_{0}^{t}\langle t-\tau\rangle^{-\frac{s_{1}+s}{2}}\|K_{2}(a)\nabla \theta^{h}\|^{\ell}_{\dot{B}^{-s_{1}}_{2,\infty}}d\tau
&\lesssim & \int_{0}^{t}\langle t-\tau\rangle^{-\frac{s_{1}+s}{2}}\|a\|_{\dot{B}^{\frac{d}{p}-1}_{p,1}}
\|\theta^{h}\|_{\dot{B}^{\frac{d}{p}}_{p,1}}d\tau\\
&=& \left(\int_{0}^{1}+\int_{1}^{t}\right)\left(\cdots\right)d\tau\triangleq I_{1}+I_{2}.
\end{eqnarray*}
It follows from \eqref{Eq:3.1} that
\begin{equation}\label{Eq:3.18}
I_{1} \lesssim \langle t\rangle^{-\frac{s_{1}+s}{2}}\mathcal{X}^{2}_{p}(1)
\end{equation}
and that, owing to \eqref{Eq:3.9}, \eqref{Eq:3.15} and \eqref{Eq:3.2}, if $t\geq 1$,
\begin{eqnarray}\label{Eq:3.19}
I_{2} &\lesssim & \int_{1}^{t}\langle t-\tau\rangle^{-\frac{s_{1}+s}{2}}\langle \tau\rangle^{-\frac{s_{1}}{2}-\frac{d}{4}+\frac{1}{2}-\alpha}\big(\langle \tau\rangle^{\frac{s_{1}}{2}+\frac{d}{4}-\frac{1}{2}}\|a\|_{\dot{B}^{\frac{d}{p}-1}_{p,1}}\big)\big(\tau^{\alpha}\|\theta^{h}\|_{\dot{B}^{\frac{d}{p}}_{p,1}}\big)d\tau \nonumber\\
&\lesssim &(\sup_{\tau \in [1,t]}\langle \tau\rangle^{\frac{s_{1}}{2}+\frac{d}{4}-\frac{1}{2}}\|a\|_{\dot{B}^{\frac{d}{p}-1}_{p,1}}) (\sup_{\tau \in [1,t]}\tau^{\alpha}\|\theta^{h}\|_{\dot{B}^{\frac{d}{p}}_{p,1}})\nonumber \\
&& \times \int_{1}^{t}\langle t-\tau\rangle^{-\frac{s_{1}+s}{2}}
\langle \tau\rangle^{-\frac{s_{1}}{2}-\frac{d}{4}+\frac{1}{2}-\alpha}d\tau
\lesssim  \langle t\rangle^{-\frac{s_{1}+s}{2}}\mathcal{D}^{2}_{p}(t),
\end{eqnarray}
where the fact $\alpha>1$ for small enough $\varepsilon>0$ implies $\frac{s_{1}}{2}+\frac{d}{4}-\frac{1}{2}+\alpha>1$ and $\frac{s_{1}}{2}+\frac{d}{4}-\frac{1}{2}+\alpha\geq \frac{s_{1}+s}{2}$ for $s_{1}$ satisfying \eqref{Eq:1.10} and $s\leq\frac{d}{2}+1$.  Bounding $v \cdot \nabla \theta^{h}$ and $\widetilde{K}_{1}(a) \mathrm{div}\,v^{h}$ essentially follow from the same procedure as $K_{2}(a)\nabla \theta^{h}$, we thus omit them. For the term with $\theta^{h} \nabla K_{3}(a)$, applying \eqref{Eq:3.17} and Proposition \ref{Prop2.5} yields
\begin{eqnarray*}
\int_{0}^{t}\langle t-\tau\rangle^{-\frac{s_{1}+s}{2}}\|\theta^{h} \nabla K_{3}(a)\|^{\ell}_{\dot{B}^{-s_{1}}_{2,\infty}}d\tau
&\lesssim & \int_{0}^{t}\langle t-\tau\rangle^{-\frac{s_{1}+s}{2}}\|a\|_{\dot{B}^{\frac{d}{p}}_{p,1}}
\|\theta^{h}\|_{\dot{B}^{\frac{d}{p}-1}_{p,1}}d\tau\\
&=& \left(\int_{0}^{1}+\int_{1}^{t}\right)\left(\cdots\right)d\tau\triangleq J_{1}+J_{2}.
\end{eqnarray*}
It is clear that $J_{1} \lesssim \langle t\rangle^{-\frac{s_{1}+s}{2}}\mathcal{X}^{2}_{p}(1)$ and that, due to \eqref{Eq:3.9}, \eqref{Eq:3.15} and \eqref{Eq:3.2}, if $t\geq 1$,
\begin{eqnarray*}
J_{2} &\lesssim & \int_{1}^{t}\langle t-\tau\rangle^{-\frac{s_{1}+s}{2}}\langle \tau\rangle^{-\frac{s_{1}}{2}-\frac{d}{4}-\alpha}\big(\langle \tau\rangle^{\frac{s_{1}}{2}+\frac{d}{4}}\|a\|_{\dot{B}^{\frac{d}{p}}_{p,1}}\big)\big(\tau^{\alpha}\|\theta^{h}\|_{\dot{B}^{\frac{d}{p}-1}_{p,1}}\big)d\tau\\
&\lesssim &\big(\sup_{\tau \in [1,t]}\langle \tau\rangle^{\frac{s_{1}}{2}+\frac{d}{4}}\|a\|_{\dot{B}^{\frac{d}{p}}_{p,1}}\big) \big(\sup_{\tau \in [1,t]}\tau^{\alpha}\|\theta^{h}\|_{\dot{B}^{\frac{d}{p}-1}_{p,1}}\big)\int_{1}^{t}\langle t-\tau\rangle^{-\frac{s_{1}+s}{2}}
\langle \tau\rangle^{-\frac{s_{1}}{2}-\frac{d}{4}-\alpha}d\tau\\
&\lesssim & \langle t\rangle^{-\frac{s_{1}+s}{2}}\mathcal{D}^{2}_{p}(t).
\end{eqnarray*}
The term $\frac{\tilde{\kappa}'(a)}{(1+a)\nu} \nabla a \cdot \nabla \theta^{h}$ may be treated at a similar way, so we omit it. Let us look at the term with $\tilde{K}_{2}(a)\theta \mathrm{div}\, v^{h}$. With the aid of \eqref{Eq:3.17}, \eqref{Eq:3.3} and Propositions \ref{Prop2.2}, \ref{Prop2.5}, we arrive at
\begin{eqnarray*}
&&\int_{0}^{t}\langle t-\tau\rangle^{-\frac{s_{1}+s}{2}}\|\tilde{K}_{2}(a)\theta \mathrm{div}\, v^{h}\|^{\ell}_{\dot{B}^{-s_{1}}_{2,\infty}}d\tau
\lesssim \int_{0}^{t}\langle t-\tau\rangle^{-\frac{s_{1}+s}{2}}\|\theta\|_{\dot{B}^{\frac{d}{p}-1}_{p,1}}
\|v^{h}\|_{\dot{B}^{\frac{d}{p}}_{p,1}}d\tau\\
 &\lesssim &\int_{0}^{t}\langle t-\tau\rangle^{-\frac{s_{1}+s}{2}}\big(\|\theta^{\ell}\|_{\dot{B}^{\frac{d}{p}-1}_{p,1}}+\|\theta^{h}\|_{\dot{B}^{\frac{d}{p}-1}_{p,1}}\big)\|v^{h}\|_{\dot{B}^{\frac{d}{p}}_{p,1}}d\tau\\
&=& \left(\int_{0}^{1}+\int_{1}^{t}\right)\left(\cdots\right)d\tau\triangleq H_{1}+H_{2}.
\end{eqnarray*}
It follows from \eqref{Eq:3.1} and the interpolation that $H_{1}\lesssim \langle t\rangle^{-\frac{s_{1}+s}{2}}\mathcal{X}^{2}_{p}(1)$. By using \eqref{Eq:3.9}, \eqref{Eq:3.15} and \eqref{Eq:3.2}, we get, if $t\geq 1$,
\begin{eqnarray*}
H_{2}
&\lesssim &\big(\sup_{\tau \in [1,t]}\langle \tau\rangle^{\frac{s_{1}}{2}+\frac{d}{4}-\frac{1}{2}}\|\theta^{\ell}\|_{\dot{B}^{\frac{d}{p}-1}_{p,1}}+\sup_{\tau \in [1,t]}\tau^{\alpha}\|\theta^{h}\|_{\dot{B}^{\frac{d}{p}-1}_{p,1}}\big) \big(\sup_{\tau \in [1,t]}\tau^{\alpha}\|v^{h}\|_{\dot{B}^{\frac{d}{p}}_{p,1}}\big)\\
&&\times\int_{1}^{t}\langle t-\tau\rangle^{-\frac{s_{1}+s}{2}}
\big(\langle\tau\rangle^{-\frac{s_{1}}{2}-\frac{d}{4}+\frac{1}{2}}+\langle\tau\rangle^{-\alpha}\big)\langle\tau\rangle^{-\alpha} d\tau
\lesssim \langle t\rangle^{-\frac{s_{1}+s}{2}}\mathcal{D}^{2}_{p}(t),
\end{eqnarray*}
where we noticed the fact $\alpha>\frac{s_{1}}{2}+\frac{d}{4}-\frac{1}{2}$ for small enough $\varepsilon>0$.
To bound the term corresponding to $k_{1}(a,\theta^{h})$, it suffices to handle $K(a) \Delta \theta^{h}$ with $K(0)=0$. To this end, one has to consider the cases $2\leq p\leq \frac{2d}{3}$ and $p>\frac{2d}{3}$ separately. If $2\leq p\leq \frac{2d}{3}$, then we have, using \eqref{Eq:3.17} and  Proposition \ref{Prop2.5},
\begin{equation*}
\|K(a) \Delta \theta^{h}\|^{\ell}_{\dot{B}^{-s_{1}}_{2,\infty}}
\lesssim\|a\|_{\dot{B}^{\frac{d}{p}-1}_{p,1}}\|\nabla^{2} \theta^{h}\|_{\dot{B}^{1-\frac{d}{p}}_{p,1}}\lesssim\|a\|_{\dot{B}^{\frac{d}{p}-1}_{p,1}}\|\theta^{h}\|_{\dot{B}^{\frac{d}{p}}_{p,1}},
\end{equation*}
where the fact $2\leq p \leq \frac{2d}{3}$ ensures $1-\frac{d}{p}\leq \frac{d}{p}-2$. By repeating the procedure leading to \eqref{Eq:3.18}-\eqref{Eq:3.19}, we get
\begin{eqnarray*}
\int_{0}^{t}\langle t-\tau\rangle^{-\frac{s_{1}+s}{2}}\|K(a) \Delta \theta^{h}\|^{\ell}_{\dot{B}^{-s_{1}}_{2,\infty}}d\tau &\lesssim & \int_{0}^{t}\langle t-\tau\rangle^{-\frac{s_{1}+s}{2}}\|a\|_{\dot{B}^{\frac{d}{p}-1}_{p,1}}
\|\theta^{h}\|_{\dot{B}^{\frac{d}{p}}_{p,1}}d\tau\\
&\lesssim & \langle t\rangle^{-\frac{s_{1}+s}{2}}\big(\mathcal{X}^{2}_{p}(t)+\mathcal{D}^{2}_{p}(t)\big).
\end{eqnarray*}
Notice that if $p>\frac{2d}{3}$, then applying \eqref{Eq:2.4} with $\sigma=2-\frac{d}{p}>\frac{1}{2}>0$ yields
\begin{equation}\label{Eq:3.20}
\|F G^{h}\|_{\dot {B}^{-s_{1}}_{2,\infty}}^{\ell} \lesssim \|FG^{h}\|_{\dot {B}^{-s_{0}}_{2,\infty}}^{\ell} \lesssim \big(\|F\|_{\dot {B}^{2-\frac{d}{p}}_{p,1}}+\|F^{\ell}\|_{L^{{p}^{*}}}\big)
\|G^{h}\|_{\dot{B}^{\frac{d}{p}-2}_{p,1}} \ \ \hbox{with} \ \ \frac1{p^{*}}\triangleq\frac{1}{2}-\frac{1}{p},
\end{equation}
since $s_{1}\leq s_{0}$. Furthermore, using the composition inequality in Lebesgue spaces and the embeddings $\dot{B}^{\frac{d}{p}}_{2,1}\hookrightarrow L^{p^{*}}$ and $\dot{B}^{s_{0}}_{p,1}\hookrightarrow L^{p^{*}}$ gives
\begin{equation*}
\|K(a)\|_{L^{p^{*}}}\lesssim \|a\|_{L^{p^{*}}}\lesssim \|a^{\ell}\|_{\dot{B}^{\frac{d}{p}}_{2,1}}+\|a^{h}\|_{\dot{B}^{s_{0}}_{p,1}}\lesssim \|a^{\ell}\|_{\dot{B}^{\frac{d}{2}-1}_{2,1}}+\|a^{h}\|_{\dot{B}^{\frac{d}{p}}_{p,1}},
\end{equation*}
where we noticed the relations $\frac{d}{2}-1 \leq \frac{d}{p}$ ($p\leq \frac{2d}{d-2}$) and $s_{0}\leq \frac{d}{p}$. It follows from  Proposition \ref{Prop2.5}, the embedding $\dot{B}^{2-s_{0}}_{2,1} \hookrightarrow \dot{B}^{2-\frac{d}{p}}_{p,1}$ and the relations $2-s_{0}>\frac{d}{2}-1$ ($p>\frac{2d}{3}$) and $2-\frac{d}{p}<\frac{d}{p}$ ($p<d$) that
\begin{equation*}
\|K(a)\|_{\dot{B}^{2-\frac{d}{p}}_{p,1}}\lesssim \|a\|_{\dot{B}^{2-\frac{d}{p}}_{p,1}}\lesssim \|a^{\ell}\|_{\dot{B}^{2-s_{0}}_{2,1}}+\|a^{h}\|_{\dot{B}^{2-\frac{d}{p}}_{p,1}}\lesssim \|a^{\ell}\|_{\dot{B}^{\frac{d}{2}-1}_{2,1}}+\|a^{h}\|_{\dot{B}^{\frac{d}{p}}_{p,1}}.
\end{equation*}
According to \eqref{Eq:3.20}, we deduce that
\begin{eqnarray*}
\int_{0}^{t}\langle t-\tau\rangle^{-\frac{s_{1}+s}{2}}\|k_{1}(a,\theta^{h})\|^{\ell}_{\dot{B}^{-s_{1}}_{2,\infty}}d\tau
&\lesssim&  \int_{0}^{t}\langle t-\tau\rangle^{-\frac{s_{1}+s}{2}}(\|a^{\ell}\|_{\dot{B}^{\frac{d}{2}-1}_{2,1}}+\|a^{h}\|_{\dot{B}^{\frac{d}{p}}_{p,1}})
\|\theta^{h}\|_{\dot{B}^{\frac{d}{p}}_{p,1}}d\tau\\
&=& \left(\int_{0}^{1}+\int_{1}^{t}\right)\left(\cdots\right)d\tau\triangleq \tilde{I}_{1}+\tilde{I}_{2}.
\end{eqnarray*}
With the aid of \eqref{Eq:3.1}, we have $\tilde{I}_{1}\lesssim \langle t\rangle^{-\frac{s_{1}+s}{2}} \mathcal{X}^{2}_{p}(1)$.
Using the fact
\begin{eqnarray*}
\|a^{\ell}(\tau)\|_{\dot{B}^{\frac{d}{2}-1}_{2,1}} \lesssim \langle\tau\rangle^{\frac{s_{1}}{2}+\frac{d}{4}-\frac{1}{2}} \mathcal{D}_{p}(\tau)
\end{eqnarray*}
together with \eqref{Eq:3.13}, \eqref{Eq:3.15} and \eqref{Eq:3.2}, we thus get, if $t\geq1$,
\begin{eqnarray*}
\tilde{I}_{2} &=&  \int_{1}^{t}\langle t-\tau\rangle^{-\frac{s_{1}+s}{2}}(\|a^{\ell}\|_{\dot{B}^{\frac{d}{2}-1}_{2,1}}+\|a^{h}\|_{\dot{B}^{\frac{d}{p}}_{p,1}})
\|\theta^{h}\|_{\dot{B}^{\frac{d}{p}}_{p,1}}d\tau\\
&\lesssim& \big(\sup_{\tau \in [1,t]}\langle\tau\rangle^{\frac{s_{1}}{2}+\frac{d}{4}-\frac{1}{2}}\|a^{\ell}(\tau)\|_{\dot{B}^{\frac{d}{2}-1}_{2,1}}+\sup_{\tau \in [1,t]}\langle\tau\rangle^{\alpha}\|a^{h}(\tau)\|_{\dot{B}^{\frac{d}{p}}_{p,1}}\big)
\big(\sup_{\tau \in [1,t]}\tau ^{\alpha}\|\theta^{h}(\tau)\|_{\dot{B}^{\frac{d}{p}}_{p,1}}\big)\\
&&\times\int_{1}^{t}\langle t-\tau\rangle^{-\frac{s_{1}+s}{2}}
\big(\langle\tau\rangle^{-\frac{s_{1}}{2}-\frac{d}{4}+\frac{1}{2}}+\langle\tau\rangle^{-\alpha}\big)\langle\tau\rangle^{-\alpha} d\tau
\lesssim \langle t\rangle^{-\frac{s_{1}+s}{2}}\mathcal{D}^{2}_{p}(t).
\end{eqnarray*}
Let us finally estimate the term with $k_{2}(a,\nabla v, \nabla v^{h})$. We observe that, owing to \eqref{Eq:3.17}, \eqref{Eq:3.3}, Propositions \ref{Prop2.2}, \ref{Prop2.5} and the interpolation,
\begin{eqnarray*}
&&\int_{0}^{t}\langle t-\tau\rangle^{-\frac{s_{1}+s}{2}}\|k_{3}(a,\nabla v, \nabla v^{h})\|^{\ell}_{\dot{B}^{-s_{1}}_{2,\infty}}d\tau
\lesssim  \int_{0}^{t}\langle t-\tau\rangle^{-\frac{s_{1}+s}{2}}
\|v\|_{\dot{B}^{\frac{d}{p}}_{p,1}}
\|v^{h}\|_{\dot{B}^{\frac{d}{p}}_{p,1}}d\tau\\
&\lesssim&  \int_{0}^{t}\langle t-\tau\rangle^{-\frac{s_{1}+s}{2}}\big(\|v^{\ell}\|_{\dot{B}^{\frac{d}{p}}_{p,1}}+\|v^{h}\|_{\dot{B}^{\frac{d}{p}-1}_{p,1}}\big)
\|\nabla v^{h}\|_{\dot{B}^{\frac{d}{p}}_{p,1}}d\tau\\
&=& \left(\int_{0}^{1}+\int_{1}^{t}\right)\left(\cdots\right)d\tau\triangleq \tilde{J}_{1}+\tilde{J}_{2}.
\end{eqnarray*}
For $\tilde{J}_{1}$, it is clear that $\tilde{J}_{1}\lesssim \langle t\rangle^{-\frac{s_{1}+s}{2}} \mathcal{X}^{2}_{p}(1)$ and that, owing to the fact $\alpha>\frac{s_{1}}{2}+\frac{d}{4}$ for sufficiently small $\varepsilon>0$ as well as \eqref{Eq:3.9}, \eqref{Eq:3.13}, \eqref{Eq:3.15} and \eqref{Eq:3.2}, if $t\geq1$,
\begin{eqnarray*}
\tilde{J}_{2}&= &\int_{1}^{t}\langle t-\tau\rangle^{-\frac{s_{1}+s}{2}}
\big(\|v^{\ell}\|_{\dot{B}^{\frac{d}{p}}_{p,1}}+\|v^{h}\|_{\dot{B}^{\frac{d}{p}-1}_{p,1}}\big)
\|\nabla v^{h}\|_{\dot{B}^{\frac{d}{p}}_{p,1}}d\tau\\
&\lesssim& \big(\sup_{\tau \in [1,t]}\langle\tau\rangle^{\frac{s_{1}}{2}+\frac{d}{4}}\|v^{\ell}\|_{\dot{B}^{\frac{d}{p}}_{p,1}}+\sup_{\tau \in [1,t]}\langle\tau\rangle^{\alpha}\|v^{h}\|_{\dot{B}^{\frac{d}{p}-1}_{p,1}}\big)
(\sup_{\tau \in [1,t]}\tau ^{\alpha}\|\nabla v^{h}\|_{\dot{B}^{\frac{d}{p}}_{p,1}})\\
&&\times\int_{1}^{t}\langle t-\tau\rangle^{-\frac{s_{1}+s}{2}}
\big(\langle\tau\rangle^{-\frac{s_{1}}{2}-\frac{d}{4}}+\langle\tau\rangle^{-\alpha}\big)\langle\tau\rangle^{-\alpha} d\tau
\lesssim \langle t\rangle^{-\frac{s_{1}+s}{2}}\mathcal{D}^{2}_{p}(t).
\end{eqnarray*}
Hence, the proof of Lemma \ref{Lem3.3} is finished.
\end{proof}
Combining \eqref{Eq:3.5} and those estimates in Lemmas \ref{Lem3.2}-\ref{Lem3.3}, we get Proposition \ref{Prop3.1} eventually. Furthermore, with the aid of \eqref{Eq:3.4}, we conclude that
\begin{equation}\label{Eq:3.21}
\langle t\rangle^{\frac{s_{1}+s}{2}}\left\|(a,v,\theta)(t)\right\|^{\ell}_{\dot{B}^{s}_{2,1}}
\lesssim \mathcal{D}_{p,0}+\mathcal{D}^{2}_{p}(t)+\mathcal{X}^{2}_{p}(t) \ \ \hbox{for all} \ \ t\geq0,
\end{equation}
provided that $-s_{1}<s\leq\frac{d}{2}+1$.
\subsection{Second step: decay estimates for the high frequencies of $(\nabla a,v,\theta)$}
In this section, we shall apply the energy method of $L^{p}$ type in terms of the effective velocity. Let $\mathcal{P}\triangleq \mathrm{Id}+\nabla\left(-\Delta\right)^{-1}\mathrm{div}$ be the Leray projector onto divergence-free vector-fields. It follows from \eqref{Eq:1.8} that $\mathcal{P}v$ satisfies
\begin{equation*}
\partial_{t}\mathcal{P}v-\tilde{\mu}_{\infty} \Delta \mathcal{P}v=\mathcal{P}g \ \ \hbox{with} \ \ \tilde{\mu}_{\infty}=\frac{\mu_{\infty}}{\nu}.
\end{equation*}
Let us introduce the effective velocity $w$:
\begin{equation*}
w\triangleq\nabla(-\Delta)^{-1}(a-\mathrm{div}\,v),
\end{equation*}
which was initiated by Hoff \cite{HD} and first used in the context of critical regularity by Haspot \cite{HB} as well as developed by Danchin, the first author and the second auther \cite{DX1,DX2,SX,XJ}. We observe that $(a,w,\theta)$ satisfies
\begin{equation*}
\left\{
\begin{array}{l}
\partial _{t}a+a=f-\mathrm{div}\,w,\\ [2mm]
\partial _{t}w-\Delta w=\nabla(-\Delta )^{-1}(f-\mathrm{div}\,g)-\gamma \nabla \theta+w-(-\Delta )^{-1}\nabla a,\\ [2mm]
\partial_{t}\theta-\beta\Delta\theta=k-\gamma\mathrm{div}\,w-a.
\end{array}
\right.
\end{equation*}
\begin{prop}\label{Prop3.2}
	If $p$ satisfies \eqref{Eq:1.9}, then it holds that for all $T\geq0$,
	\begin{eqnarray}\label{Eq:3.22}
	\|\langle\tau\rangle^{\alpha}(\nabla a,v)\|_{\widetilde{L} ^{\infty}_{T}(\dot {B}^{\frac {d}{p}-1}_{p,1})}^{h}+\|\langle\tau\rangle^{\alpha} \theta \|_{\widetilde{L}^{\infty}_{T}(\dot{B}^{\frac{d}{p}-2}_{p,1})}^{h}
	&\lesssim& \|\left(\nabla a_{0},v_{0}\right)\|^{h}_{\dot{B}_{p,1}^{\frac {d}{p}-1}}+\|\theta_{0}\|^{h}_{\dot{B}_{p,1}^{\frac {d}{p}-2}}\nonumber\\
	&&+\mathcal{D}^{2}_{p}(T)+\mathcal{X}^{2}_{p}(T),
	\end{eqnarray}
with $\alpha=s_{1}+\frac{d}{2}+\frac{1}{2}-\varepsilon$ for sufficiently small $\varepsilon>0$, where $\mathcal{X}_{p}(T)$ and $\mathcal{D}_{p}(T)$ have been defined by \eqref{Eq:3.1} and \eqref{Eq:1.13}, respectively.
\end{prop}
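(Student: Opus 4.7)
The plan is to work with the hybrid variables $(a,\mathcal{P}v,w,\theta)$: the incompressible component $\mathcal{P}v$ and the temperature $\theta$ satisfy parabolic equations (with a lower-order coupling), the effective velocity $w$ satisfies a parabolic equation modulo low-order terms in $a$, while $a$ itself obeys a damped transport equation. In high frequencies (indices $j\geq j_{0}-1$) all these mechanisms — heat smoothing and pure damping — decouple the system in a way that one can run a direct $L^{p}$ energy argument at each dyadic level. Note in particular that $\mathcal{Q}v=w-\nabla(-\Delta)^{-1}a$, so the high-frequency norm $\|v\|^{h}_{\dot B^{d/p-1}_{p,1}}$ is equivalent to $\|\mathcal{P}v\|^{h}_{\dot B^{d/p-1}_{p,1}}+\|w\|^{h}_{\dot B^{d/p-1}_{p,1}}+\|\nabla a\|^{h}_{\dot B^{d/p-2}_{p,1}}$, which is why we target $(a,\mathcal{P}v,w,\theta)$.

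First, I would apply $\dot\Delta_{j}$ to each equation and perform the usual $L^{p}$ energy multiplier (using the Danchin--Mucha trick for the commutators produced by the transport term $u\cdot\nabla a$, which are absorbed thanks to the smallness \eqref{Eq:3.3}). Combined with Bernstein's inequality \eqref{Eq:2.6}, this yields, for $j\geq j_{0}-1$,
\begin{align*}
\tfrac{d}{dt}\|\dot\Delta_{j}a\|_{L^{p}}+c\|\dot\Delta_{j}a\|_{L^{p}}&\lesssim \|\dot\Delta_{j}(f-\mathrm{div}\,w)\|_{L^{p}}+\text{(commutator)},\\
\tfrac{d}{dt}\|\dot\Delta_{j}(\mathcal{P}v,w)\|_{L^{p}}+c\,2^{2j}\|\dot\Delta_{j}(\mathcal{P}v,w)\|_{L^{p}}&\lesssim \|\dot\Delta_{j}(\mathcal{P}g,R)\|_{L^{p}},\\
\tfrac{d}{dt}\|\dot\Delta_{j}\theta\|_{L^{p}}+c\,2^{2j}\|\dot\Delta_{j}\theta\|_{L^{p}}&\lesssim \|\dot\Delta_{j}(k-\gamma\,\mathrm{div}\,w-a)\|_{L^{p}},
\end{align*}
where $R$ collects the low-order terms $\nabla(-\Delta)^{-1}(f-\mathrm{div}\,g)-\gamma\nabla\theta+w-(-\Delta)^{-1}\nabla a$. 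Since $2^{2j}\gtrsim 1$ on the high-frequency regime, the zero-order perturbations $w$ and $(-\Delta)^{-1}\nabla a$ (of order $-1$) in the $w$-equation as well as $\gamma\,\mathrm{div}\,w$ and $a$ in the $\theta$-equation will be handled either by absorption or by direct control through already-established quantities.

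Next I would multiply these dyadic inequalities by $\langle\tau\rangle^{\alpha}$ and integrate on $[0,t]$, using
\[
\frac{d}{d\tau}\!\left(\langle\tau\rangle^{\alpha}\|\dot\Delta_{j}z\|_{L^{p}}\right)=\alpha\,\tau\langle\tau\rangle^{\alpha-2}\|\dot\Delta_{j}z\|_{L^{p}}+\langle\tau\rangle^{\alpha}\tfrac{d}{d\tau}\|\dot\Delta_{j}z\|_{L^{p}}.
\]
The first (time-weight-correction) term amounts to a norm with weight $\langle\tau\rangle^{\alpha-1}$ in the high-frequency part; splitting $\int_{0}^{t}=\int_{0}^{1}+\int_{1}^{t}$, the short-time piece is dominated by $\mathcal{X}_{p}(1)$ and the long-time piece is dominated by $\mathcal{D}_{p}(\tau)$ (using $\tau\langle\tau\rangle^{-2}\lesssim\langle\tau\rangle^{-1}$ together with the embedding of high-frequency $\dot B^{\sigma}_{p,1}$ norms into the quantities already controlled by $\mathcal{D}_{p}$ at weight $\langle\tau\rangle^{\alpha}$). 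After multiplying by $2^{j(\frac{d}{p}-1)}$ (resp.\ $2^{j(\frac{d}{p}-2)}$ for $\theta$) and summing over $j\geq j_{0}-1$ following the $\widetilde L^{\infty}_{T}$-convention of Definition \ref{Defn2.2}, one obtains
\[
\|\langle\tau\rangle^{\alpha}(\nabla a,v)\|^{h}_{\widetilde L^{\infty}_{T}(\dot B^{\frac{d}{p}-1}_{p,1})}+\|\langle\tau\rangle^{\alpha}\theta\|^{h}_{\widetilde L^{\infty}_{T}(\dot B^{\frac{d}{p}-2}_{p,1})}\lesssim\text{(initial data)}+\text{(nonlinear)}+\text{(weight correction)}.
\]

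The main obstacle is the nonlinear term estimate, namely bounding
\[
\|\langle\tau\rangle^{\alpha}(f,g)\|^{h}_{\widetilde L^{1}_{T}(\dot B^{\frac{d}{p}-1}_{p,1})}+\|\langle\tau\rangle^{\alpha}k\|^{h}_{\widetilde L^{1}_{T}(\dot B^{\frac{d}{p}-2}_{p,1})}\lesssim\mathcal{D}_{p}^{2}(T)+\mathcal{X}_{p}^{2}(T).
\]
For the terms common to the barotropic system ($f$, $\mathcal{G}_{1},\dots,\mathcal{G}_{4}$) I would follow \cite{DX1}. The genuinely new terms are the thermal couplings $K_{2}(a)\nabla\theta$, $\theta\nabla K_{3}(a)$, $\widetilde K_{1}(a)\mathrm{div}\,v$, $\widetilde K_{2}(a)\theta\,\mathrm{div}\,v$, the heat-conduction nonlinearity $k_{1}(a,\theta)$ and the viscous dissipation $k_{2}(a,\nabla v,\nabla v)$. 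For each of these I would split $\theta=\theta^{\ell}+\theta^{h}$ (and similarly for $v$), apply the composition inequality of Proposition \ref{Prop2.5} together with the Besov product rules of Proposition \ref{Prop2.2}, and then substitute the decay rates provided by $\mathcal{D}_{p}(t)$ (as in \eqref{Eq:3.9}--\eqref{Eq:3.15}) together with the global-in-time bounds of $\mathcal{X}_{p}(t)$. The delicate point is that $\theta$ is one derivative less regular than $(a,v)$, so in terms like $\theta^{h}\mathrm{div}\,v$ or $k_{1}(a,\theta^{h})$ one must trade this derivative with care; when $p>\tfrac{2d}{3}$ the Besov embedding is not enough and one falls back on the hybrid estimate of Proposition \ref{Prop2.4} with the $L^{p^{*}}$-term handled by $\dot B^{d/2-1}_{2,1}\hookrightarrow L^{p^{*}}$ at low frequencies, exactly as was done in Lemma \ref{Lem3.3}. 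Combining in time-weighted form: every product is estimated by $\langle\tau\rangle^{-\alpha}\mathcal{D}_{p}^{2}(T)$ (plus short-time $\mathcal{X}_{p}^{2}$ contributions), which is integrable since $\alpha>1$ whenever $s_{1}>1-\frac{d}{2}$ — this is precisely where the lower bound in \eqref{Eq:1.10} is needed, consistently with Remark \ref{Rem1.1}. Together with the initial-data contribution coming from Proposition \ref{Prop2.7} applied at $\rho_{1}=\infty$, this yields \eqref{Eq:3.22}.
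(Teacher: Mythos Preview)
Your overall architecture is sound and largely matches the paper: work with $(a,\mathcal{P}v,w,\theta)$, run a localized $L^{p}$ energy argument on the high-frequency blocks, and split the time interval into $[0,1]$ (handled by $\mathcal{X}_{p}$) and $[1,T]$ (handled by $\mathcal{D}_{p}$). But there is a genuine gap in the long-time part.

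You write that the nonlinear task reduces to bounding
\[
\|\langle\tau\rangle^{\alpha}(f,g)\|^{h}_{\widetilde L^{1}_{T}(\dot B^{\frac{d}{p}-1}_{p,1})}+\|\langle\tau\rangle^{\alpha}k\|^{h}_{\widetilde L^{1}_{T}(\dot B^{\frac{d}{p}-2}_{p,1})}
\]
and then claim that ``every product is estimated by $\langle\tau\rangle^{-\alpha}\mathcal{D}_{p}^{2}(T)$ \dots which is integrable since $\alpha>1$''. This is not true for the low--low interactions. Take for instance $K_{2}(a)\nabla\theta^{\ell}$: the best available bounds are $\|a\|_{\dot B^{d/p}_{p,1}}\lesssim\langle\tau\rangle^{-(\frac{s_{1}}{2}+\frac{d}{4})}\mathcal{D}_{p}$ and $\|\nabla\theta^{\ell}\|_{\dot B^{d/p}_{p,1}}\lesssim\langle\tau\rangle^{-(\frac{s_{1}}{2}+\frac{d}{4}+\frac{1}{2})}\mathcal{D}_{p}$, so the product decays only like $\langle\tau\rangle^{-(s_{1}+\frac{d}{2}+\frac{1}{2})}=\langle\tau\rangle^{-\alpha-\varepsilon}$. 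After multiplying by $\langle\tau\rangle^{\alpha}$ you are left with $\langle\tau\rangle^{-\varepsilon}$, which is \emph{not} integrable on $[1,T]$ --- the $\widetilde L^{1}_{T}$ norm grows like $T^{1-\varepsilon}$. The same obstruction hits every term of the form (low part)$\times$(low part), and the weight-correction term $\alpha\langle\tau\rangle^{\alpha-1}\|\dot\Delta_{j}z\|_{L^{p}}$ suffers the analogous $\log T$ divergence.

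The paper avoids this precisely by \emph{not} passing to $\widetilde L^{1}_{T}$: it keeps the exponential factor $e^{-c_{0}(t-\tau)}$ from the damped/parabolic estimates and uses the elementary inequality
\[
\sup_{t\in[2,T]}\langle t\rangle^{\alpha}\!\int_{1}^{t}e^{-c_{0}(t-\tau)}Z_{j}(\tau)\,d\tau\;\lesssim\;\sup_{\tau\in[1,T]}\tau^{\alpha}Z_{j}(\tau)
\]
(see \eqref{Eq:3.27}) to convert the time integral into a supremum. One then only needs $\|t^{\alpha}(f,g)\|^{h}_{\widetilde L^{\infty}_{T}(\dot B^{\frac{d}{p}-1}_{p,1})}+\|t^{\alpha}k\|^{h}_{\widetilde L^{\infty}_{T}(\dot B^{\frac{d}{p}-2}_{p,1})}\lesssim\mathcal{D}_{p}^{2}+\mathcal{X}_{p}\mathcal{D}_{p}$, which \emph{does} hold for the low--low interactions via the auxiliary bounds \eqref{Eq:3.28}--\eqref{Eq:3.29}. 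This $\widetilde L^{\infty}_{T}$ reduction is the missing ingredient in your plan; without it the argument does not close. (A minor additional point: the $p>\tfrac{2d}{3}$ dichotomy and Proposition~\ref{Prop2.4} are used only in the low-frequency analysis of Lemma~\ref{Lem3.3}; they play no role in the high-frequency step here.)
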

\begin{proof}
By performing the $L^{p}$ energy method, we end up with (see \cite{DX1,DX2,SX,XJ} for details)
\begin{eqnarray}\label{Eq:3.23}
	\|\langle\tau\rangle^{\alpha}(\nabla a,v)\|_{\widetilde{L} ^{\infty}_{T}(\dot {B}^{\frac {d}{p}-1}_{p,1})}^{h}+\|\langle\tau\rangle^{\alpha} \theta \|_{\widetilde{L}^{\infty}_{T}(\dot{B}^{\frac{d}{p}-2}_{p,1})}^{h}
	\lesssim \left\|\left(\nabla a_{0},v_{0}\right)\right\|^{h}_{\dot{B}_{p,1}^{\frac {d}{p}-1}}+\left\|\theta_{0}\right\|^{h}_{\dot{B}_{p,1}^{\frac {d}{p}-2}} \nonumber\\
	+\sum_{j\geq j_{0}-1}\sup_{t\in [0,T]}\left(\langle t\rangle^{\alpha}\int_{0}^{t}e^{-c_{0}(t-\tau)}2^{j(\frac{d}{p}-1)}Z_{j}(\tau)d\tau\right).
\end{eqnarray}
with $Z_{j}\triangleq Z^{1}_{j}+\cdots + Z^{6}_{j}$ and
\begin{eqnarray*}
&&Z^{1}_{j}\triangleq\|\dot{\Delta}_{j}\left(av\right)\|_{L^{p}},\ \ \ \ \ \ \ \ \  \ Z^{2}_{j}\triangleq\|g_{j}\|_{L^{p}},
\ \ \ \ \ \ \ Z^{3}_{j}\triangleq 2^{-j}\|k_{j}\|_{L^{p}},\\
&&Z^{4}_{j}\triangleq\|\nabla \dot{\Delta}_{j}\left(a\,\mathrm{div}\,v\right)\|_{L^{p}}, \ \ \,
Z^{5}_{j}\triangleq\|R_{j}\|_{L^{p}},\ \ \ \ \ \ Z^{6}_{j}\triangleq\|\mathrm{div}\,v\|_{L^{\infty}}\|\nabla a_{j}\|_{L^{p}},
\end{eqnarray*}
where  $a_{j}\triangleq \dot{\Delta}_{j}a$, $g_{j}\triangleq \dot{\Delta}_{j}g$, $k_{j}\triangleq \dot{\Delta}_{j}k$ and $R_{j}\triangleq [u\cdot \nabla,\nabla\dot{\Delta}_{j}]a$.

Firstly, we observe that
\begin{equation*}
\sum_{j\geq j_{0}-1}\sup_{t\in [0,2]}\left(\langle t\rangle^{\alpha}\int_{0}^{t}e^{-c_{0}(t-\tau)}2^{j(\frac{d}{p}-1)}Z_{j}(\tau)d\tau\right)
\lesssim \int_{0}^{2}\sum_{j\geq j_{0}-1}2^{j(\frac{d}{p}-1)}Z_{j}(\tau)d\tau.
\end{equation*}
The terms in $Z^{1}_{j}$, $Z^{4}_{j}$, $Z^{5}_{j}$ and $Z^{6}_{j}$ as well as those in $Z^{2}_{j}$ corresponding to $\mathcal{G}_{1}$, $\mathcal{G}_{2}$, $\mathcal{G}_{3}$ and $\mathcal{G}_{4}$ may be estimated exactly as in \cite{XJ}. Consequently, it is only a matter of handing those ``new" nonlinear terms in $Z^{2}_{j}$ and $Z^{3}_{j}$. Precisely,
\begin{eqnarray*}
&K_{2}(a)\nabla \theta, \ \  \theta \nabla K_{3}(a), \ \ v \cdot \nabla \theta, \ \ \widetilde{K}_{1}(a) \mathrm{div}\,v, \\
&\tilde{K}_{2}(a)\theta \mathrm{div}\, v, \ \ \frac{\tilde{\kappa}'(a)}{(1+a)\nu} \nabla a \cdot \nabla \theta, \ \ k_{1}(a,\theta), \ \ k_{2}(a,\nabla v, \nabla v).
\end{eqnarray*}
To do this, we shall use frequently that, owing to \eqref{Eq:3.1}, interpolation and embeddings (recall that $p\geq 2$),
\begin{equation}\label{Eq:3.24}
\|(a,v)\|_{L_{t}^{2} (\dot{B}_{p,1}^{\frac {d}{p}})} \lesssim \mathcal{X}_{p}(t), \ \ \ \ \ \ \ \ \|(\nabla \theta^{\ell},\theta ^{h})\|_{L^{1}_{t}(\dot{B}^{\frac{d}{p}}_{p,1})}\lesssim \mathcal{X}_{p}(t)
\end{equation}
and also that
\begin{equation}\label{Eq:3.25}
\|(\nabla \theta^{\ell},\theta^{h})\|_{L^{2}_{t}(\dot{B}^{\frac{d}{p}-1}_{p,1})} \lesssim \mathcal{X}_{p}(t),\ \ \ \|(\nabla a,v,\nabla \theta^{\ell})\|_{\tilde{L}^{\infty}_{t}(\dot{B}^{\frac{d}{p}-1}_{p,1})} \lesssim \mathcal{X}_{p}(t).
\end{equation}
For the terms with $K_{2}(a)\nabla \theta$ and $\theta \nabla K_{3}(a)$, we decompose
$$K_{2}(a)\nabla \theta=K_{2}(a)\nabla \theta^{\ell}+K_{2}(a)\nabla \theta^{h} \ \ \hbox{and} \ \ \theta \nabla K_{3}(a)=\theta^{\ell} \nabla K_{3}(a)+\theta^{h} \nabla K_{3}(a).$$
With the aid of Propositions \ref{Prop2.2} and \ref{Prop2.5}, the H\"{o}lder inequality, \eqref{Eq:3.24} and \eqref{Eq:3.25}, we deduce that
\begin{eqnarray*}
\|K_{2}(a)\nabla \theta\|^{h}_{L^{1}_{t}(\dot{B}^{\frac{d}{p}-1}_{p,1})}& \lesssim& \|a\|_{L^{2}_{t}(\dot{B}^{\frac{d}{p}}_{p,1})}\|\nabla \theta^{\ell}\|_{L^{2}_{t}(\dot{B}^{\frac{d}{p}-1}_{p,1})}+\|a\|_{L^{\infty}_{t}(\dot{B}^{\frac{d}{p}}_{p,1})}\|\nabla \theta^{h}\|_{L^{1}_{t}(\dot{B}^{\frac{d}{p}-1}_{p,1})}\\
&\lesssim&  \mathcal{X}^{2}_{p}(t),\\
\|\theta \nabla K_{3}(a)\|^{h}_{L^{1}_{t}(\dot{B}^{\frac{d}{p}-1}_{p,1})}
&\lesssim& \|\theta^{\ell}\|_{L^{2}_{t}(\dot{B}^{\frac{d}{p}}_{p,1})}
\|a\|_{L^{2}_{t}(\dot{B}^{\frac{d}{p}}_{p,1})}+\|\theta^{h}\|_{L^{1}_{t}(\dot{B}^{\frac{d}{p}}_{p,1})}\|a\|_{L^{\infty}_{t}(\dot{B}^{\frac{d}{p}}_{p,1})}\\
&\lesssim& \mathcal{X}^{2}_{p}(t).
\end{eqnarray*}
Keep in mind that the term $\frac{\tilde{\kappa}'(a)}{(1+a)\nu} \nabla a \cdot \nabla \theta$ of $k$ is of the type $\nabla H(a) \cdot \nabla \theta$ with $H(0)=0$, and the term $k_{1}(a,\theta)$ of $k$ is of the type $K(a)\Delta \theta$ with $K(0)=0$. For the terms with $v \cdot \nabla \theta$, $\tilde{K}_{2}(a)\theta \mathrm{div}\, v$, $\frac{\tilde{\kappa}'(a) }{(1+a)\nu}\nabla a \cdot \nabla \theta$ and $k_{1}(a,\theta)$, we decompose them as follows:
$$v \cdot \nabla \theta=v \cdot \nabla \theta^{\ell}+v \cdot \nabla \theta^{h},\ \ \ \ \tilde{K}_{2}(a)\theta \mathrm{div}\, v=\tilde{K}_{2}(a)\theta^{\ell} \mathrm{div}\, v+\tilde{K}_{2}(a)\theta^{h} \mathrm{div}\, v,$$
$$\nabla H(a) \cdot \nabla \theta=\nabla H(a) \cdot \nabla \theta^{\ell}+\nabla H(a) \cdot \nabla \theta^{h}, \ \ \ \ k_{1}(a,\theta)=k_{1}(a,\theta^{\ell})+k_{1}(a,\theta^{h}).$$
Furthermore, we observe that, thanks to  Propositions \ref{Prop2.2} and \ref{Prop2.5} and  \eqref{Eq:3.24}, \eqref{Eq:3.25}, \eqref{Eq:3.3} as well as the relations $p<d$ and $d\geq 3$,
\begin{eqnarray*}
\|v \cdot \nabla \theta\|^{h}_{L^{1}_{t}(\dot{B}^{\frac{d}{p}-2}_{p,1})} &\lesssim&
\|v \cdot \nabla \theta^{\ell}\|^{h}_{L^{1}_{t}(\dot{B}^{\frac{d}{p}-1}_{p,1})}
+\|v \cdot \nabla \theta^{h}\|^{h}_{L^{1}_{t}(\dot{B}^{\frac{d}{p}-2}_{p,1})}\\
 &\lesssim&\|v\|_{L^{2}_{t}(\dot{B}^{\frac{d}{p}}_{p,1})}
\big( \|\nabla \theta^{\ell}\|_{L^{2}_{t}(\dot{B}^{\frac{d}{p}-1}_{p,1})}
 +\|\nabla \theta^{h}\|_{L^{2}_{t}(\dot{B}^{\frac{d}{p}-2}_{p,1})}\big)\\
&\lesssim&  \mathcal{X}^{2}_{p}(t),\\
\|\tilde{K}_{2}(a)\theta \mathrm{div}\,v\|^{h}_{L^{1}_{t}(\dot{B}^{\frac{d}{p}-2}_{p,1})}
&\lesssim&
\|\tilde{K}_{2}(a)\theta^{\ell} \mathrm{div}\,v\|^{h}_{L^{1}_{t}(\dot{B}^{\frac{d}{p}-1}_{p,1})}
+\|\tilde{K}_{2}(a)\theta^{h} \mathrm{div}\,v\|^{h}_{L^{1}_{t}(\dot{B}^{\frac{d}{p}-2}_{p,1})}\\
&\lesssim&\|\theta^{\ell}\|_{L^{2}_{t}(\dot{B}^{\frac{d}{p}}_{p,1})}
\|v\|_{L^{2}_{t}(\dot{B}^{\frac{d}{p}}_{p,1})}+\|\theta^{h}\|_{L^{1}_{t}(\dot{B}^{\frac{d}{p}}_{p,1})}
\|v\|_{L^{\infty}_{t}(\dot{B}^{\frac{d}{p}-1}_{p,1})}\\
&\lesssim&  \mathcal{X}^{2}_{p}(t),\\
\left\|\frac{\tilde{\kappa}'(a)}{(1+a)\nu} \nabla a \cdot \nabla \theta\right\|^{h}_{L^{1}_{t}(\dot{B}^{\frac{d}{p}-2}_{p,1})}
&\lesssim& \|\nabla H(a) \cdot \nabla \theta^{\ell}\|^{h}_{L^{1}_{t}(\dot{B}^{\frac{d}{p}-1}_{p,1})}
+\|\nabla H (a) \cdot \nabla \theta^{h}\|^{h}_{L^{1}_{t}(\dot{B}^{\frac{d}{p}-2}_{p,1})}\\
&\lesssim&\|a\|_{L^{\infty}_{t}(\dot{B}^{\frac{d}{p}}_{p,1})}
\big(\|\nabla \theta^{\ell}\|_{L^{1}_{t}(\dot{B}^{\frac{d}{p}}_{p,1})}+
\|\theta^{h}\|_{L^{1}_{t}(\dot{B}^{\frac{d}{p}}_{p,1})}\big)\\
&\lesssim&  \mathcal{X}^{2}_{p}(t),\\
\|k_{1}(a,\theta)\|^{h}_{L^{1}_{t}(\dot{B}^{\frac{d}{p}-2}_{p,1})}
&\lesssim&\|k_{1}(a,\theta^{\ell})\|^{h}_{L^{1}_{t}(\dot{B}^{\frac{d}{p}-1}_{p,1})}
+\|k_{1}(a,\theta^{h})\|^{h}_{L^{1}_{t}(\dot{B}^{\frac{d}{p}-2}_{p,1})}\\
&\lesssim&\|a\|_{L^{\infty}_{t}(\dot{B}^{\frac{d}{p}}_{p,1})}
\big(\|\nabla \theta^{\ell}\|_{L^{1}_{t}(\dot{B}^{\frac{d}{p}}_{p,1})}+
\|\theta^{h}\|_{L^{1}_{t}(\dot{B}^{\frac{d}{p}}_{p,1})}\big)\\
&\lesssim&  \mathcal{X}^{2}_{p}(t).
\end{eqnarray*}
It follows from Propositions \ref{Prop2.2} and \ref{Prop2.5}, \eqref{Eq:3.3} and  \eqref{Eq:3.24} that
\begin{eqnarray*}
\|\tilde{K}_{1}(a) \,\mathrm{div}\,v\|^{h}_{L^{1}_{t}(\dot{B}^{\frac{d}{p}-2}_{p,1})}
&\lesssim&\|a\|_{L^{2}_{t}(\dot{B}^{\frac{d}{p}}_{p,1})}
\|v\|_{L^{2}_{t}(\dot{B}^{\frac{d}{p}}_{p,1})}
\lesssim \mathcal{X}^{2}_{p}(t),\\
\|k_{2}(a,\nabla v, \nabla v)\|^{h}_{L^{1}_{t}(\dot{B}^{\frac{d}{p}-2}_{p,1})}
&\lesssim&\| v\|^{2}_{L^{2}_{t}(\dot{B}^{\frac{d}{p}}_{p,1})}
\lesssim \mathcal{X}^{2}_{p}(t).
\end{eqnarray*}
Therefore, putting together all the above estimates, we conclude that
\begin{equation}\label{Eq:3.26}
\sum_{j\geq j_{0}-1}\sup_{t\in [0,2]}\left(\langle t\rangle^{\alpha}\int_{0}^{t}e^{-c_{0}(t-\tau)}2^{j(\frac{d}{p}-1)}Z_{j}(\tau)d\tau\right)\lesssim C\mathcal{X}^{2}_{p}(2).
\end{equation}
Secondly, let us bound the supremum for $2\leq t\leq T$ in the last term of \eqref{Eq:3.23}. To this end, one can split the integral on $[0,t]$ into integrals $[0,1]$ and $[1,t]$. The $[0,1]$ part can be bounded exactly as the supremum on $[0,2]$ handled before. In order to deal with the $[1,t]$ part of the integral for $2\leq t\leq T$, we start from
\begin{equation}\label{Eq:3.27}
\sum_{j\geq j_{0}-1}\sup_{t\in [2,T]}\left(\langle t\rangle^{\alpha}\int_{1}^{t}e^{-c_{0}(t-\tau)}2^{j(\frac{d}{p}-1)}Z_{j}(\tau)d\tau\right)\lesssim \sum_{j\geq j_{0}-1} 2^{j\,(\frac{d}{p}-1)}\sup_{t\in [1,T]}t^{\alpha}Z_{j}(t).
\end{equation}
In what follows, we claim the following two inequalities
\begin{eqnarray}\label{Eq:3.28}
&&\|\tau^{\frac{s_{1}}{2}+\frac{d}{4}-\frac{\varepsilon}{2}} a(\tau)\|
_{\tilde{L}_{t}^{\infty}(\dot{B}_{p,1}^{\frac {d}{p}})} \lesssim \mathcal{D}_{p}(t),\\
\label{Eq:3.29}
&&\|\tau^{\frac{s_{1}}{2}+\frac{d}{4}+\frac{m}{2}-\frac{\varepsilon}{2}}\nabla ^{m}(a^{\ell},v^{\ell}, \theta^{\ell})(\tau)\|_{\tilde{L}_{t}^{\infty}(\dot{B}_{p,1}^{\frac {d}{p}})}
\lesssim \mathcal{D}_{p}(t) \ \ \hbox{for} \ \ m=0,1.
\end{eqnarray}
Indeed, it follows from Proposition \ref{Prop2.1}, the fact $\alpha>\frac{s_{1}}{2}+\frac{d}{4}-\frac{\varepsilon}{2}$ for small enough $\varepsilon$, \eqref{Eq:1.13} and tilde norms that
\begin{eqnarray*}
\|\tau^{\frac{s_{1}}{2}+\frac{d}{4}-\frac{\varepsilon}{2}} a(\tau)\|_{\tilde{L}_{t}^{\infty}(\dot{B}_{p,1}^{\frac {d}{p}})}
&\lesssim& \|\tau^{\frac{s_{1}}{2}+\frac{d}{4}-\frac{\varepsilon}{2}} a^{\ell}(\tau)\|_{\tilde{L}_{t}^{\infty}(\dot{B}_{2,1}^{\frac {d}{2}})}
+\|\tau^{\frac{s_{1}}{2}+\frac{d}{4}-\frac{\varepsilon}{2}} a^{h}(\tau)\|_{\tilde{L}_{t}^{\infty}(\dot{B}_{p,1}^{\frac {d}{p}})}\\
&\lesssim& \|\langle \tau\rangle^{\frac{s_{1}}{2}+\frac{d}{4}-\frac{\varepsilon}{2}} a(\tau)\|^{\ell}_{L_{t}^{\infty}(\dot{B}_{2,1}^{\frac {d}{2}-\varepsilon})}
+\|\langle \tau \rangle^{\alpha} a(\tau)\|^{h}_{\tilde{L}_{t}^{\infty}(\dot{B}_{p,1}^{\frac {d}{p}})}\\
&\lesssim& \mathcal{D}_{p}(t),
\end{eqnarray*}
\begin{eqnarray*}
\|\tau^{\frac{s_{1}}{2}+\frac{d}{4}+\frac{m}{2}-\frac{\varepsilon}{2}}\nabla ^{m}(a^{\ell},v^{\ell}, \theta^{\ell})(\tau)\|_{\tilde{L}_{t}^{\infty}(\dot{B}_{p,1}^{\frac {d}{p}})}
&\lesssim& \|\langle \tau\rangle^{\frac{s_{1}}{2}+\frac{d}{4}+\frac{m}{2}-\frac{\varepsilon}{2}}\nabla ^{m}(a^{\ell},v^{\ell}, \theta^{\ell})(\tau)\|_{\tilde{L}_{t}^{\infty}(\dot{B}_{2,1}^{\frac {d}{2}})}\\
&\lesssim& \|\langle \tau \rangle^{\frac{s_{1}}{2}+\frac{d}{4}+\frac{m}{2}-\frac{\varepsilon}{2}}\nabla ^{m}(a,v, \theta)(\tau)\|^{\ell}_{L_{t}^{\infty}(\dot{B}_{2,1}^{\frac {d}{2}-\varepsilon})}\\
&\lesssim& \mathcal{D}_{p}(t) \ \ \hbox{for} \ \ m=0,1.
\end{eqnarray*}
To bound the right-hand side of \eqref{Eq:3.27}, it only need to estimate the ``new'' nonlinear terms (say, $\mathcal{G}_{5}$, $\mathcal{G}_{6}$ and $k$), which are not available in the isentropic compressible Navier-Stokes system, see \cite{XJ} for more details. Regarding the term with $K_{2}(a)\nabla \theta$, we still use the decomposition
$K_{2}(a)\nabla \theta=K_{2}(a)\nabla \theta^{h}+K_{2}(a)\nabla \theta^{\ell}$.
According to Propositions \ref{Prop2.2} and \ref{Prop2.5}, \eqref{Eq:1.13}, \eqref{Eq:3.25}, \eqref{Eq:3.28}, \eqref{Eq:3.29} and tilde norms, we get
\begin{eqnarray*}
\|t^{\alpha}K_{2}(a)\nabla \theta^{h}\|^{h}_{\tilde{L}_{T}^{\infty}(\dot{B}_{p,1}^{\frac{d}{p}-1})}
&\lesssim&\|a\|_{\tilde{L}_{T}^{\infty}(\dot{B}_{p,1}^{\frac {d}{p}})}
\|t^{\alpha} \nabla\theta^{h}\|_{\tilde{L}_{T}^{\infty}(\dot{B}_{p,1}^{\frac{d}{p}-1})}
\lesssim \mathcal{X}_{p}(T)\mathcal{D}_{p}(T),\\
\|t^{\alpha}K_{2}(a)\nabla \theta^{\ell}\|^{h}_{\tilde{L}_{T}^{\infty}(\dot{B}_{p,1}^{\frac{d}{p}-1})}
&\lesssim& \|t^{\frac{s_{1}}{2}+\frac{d}{4}-\frac{\varepsilon}{2}} a\|
_{\tilde{L}_{T}^{\infty}(\dot{B}_{p,1}^{\frac {d}{p}})}
\|t^{\frac{s_{1}}{2}+\frac{d}{4}+\frac{1}{2}-\frac{\varepsilon}{2}} \nabla\theta^{\ell}\|_{\tilde{L}_{T}^{\infty}(\dot{B}_{p,1}^{\frac {d}{p}})}\\
&\lesssim& \mathcal{D}^{2}_{p}(T).
\end{eqnarray*}
For the term with $\theta \nabla K_{3}(a)$, we decompose $\theta \nabla K_{3}(a)=\theta^{h} \nabla K_{3}(a)+\theta^{\ell} \nabla K_{3}(a)$. To handle the term with $\theta^{h} \nabla K_{3}(a)$, we note that, due to Propositions \ref{Prop2.2} and \ref{Prop2.5},  \eqref{Eq:1.13}, \eqref{Eq:3.25} and tilde norms,
\begin{equation*}
\|t^{\alpha}\theta^{h} \nabla K_{3}(a)\|^{h}_{\tilde{L}_{T}^{\infty}(\dot{B}_{p,1}^{\frac{d}{p}-1})}
\lesssim \|t^{\alpha}\theta^{h}\|_{\tilde{L}_{T}^{\infty}(\dot{B}_{p,1}^{\frac{d}{p}})}
\|a\|_{\tilde{L}_{T}^{\infty}(\dot{B}_{p,1}^{\frac {d}{p}})}
\lesssim \mathcal{X}_{p}(T)\mathcal{D}_{p}(T).
\end{equation*}
For the term containing $\theta^{\ell} \nabla K_{3}(a)$, we may write
\begin{equation*}
\theta^{\ell} \nabla K_{3}(a)=\theta^{\ell} K'_{3}(a)\nabla a \ \ \hbox{with} \ \ K'_{3}(a)=\chi_{0} \sqrt{\frac{\mathcal{T}_{\infty}}{C_{v}}} \frac{\pi'_{1}\left(\varrho_{\infty}(1+a)\right)}{1+a}.
\end{equation*}
Now, we have thanks to Propositions \ref{Prop2.2} and \ref{Prop2.5}, \eqref{Eq:1.13}, \eqref{Eq:3.3}, \eqref{Eq:3.25} and \eqref{Eq:3.29},
\begin{eqnarray*}
\|t^{\alpha}\theta^{\ell} K'_{3}(a)\nabla a^{h}\|^{h}_{\tilde{L}_{T}^{\infty}(\dot{B}_{p,1}^{\frac{d}{p}-1})}&\lesssim& \|\theta^{\ell}\|_{\tilde{L}_{T}^{\infty}(\dot{B}_{p,1}^{\frac {d}{p}})}
\|t^{\alpha} \nabla a^{h}\|_{\tilde{L}_{T}^{\infty}(\dot{B}_{p,1}^{\frac{d}{p}-1})}
\lesssim \mathcal{X}_{p}(T)\mathcal{D}_{p}(T),\\
\|t^{\alpha}\theta^{\ell} K'_{3}(a)\nabla a^{\ell}\|^{h}_{\tilde{L}_{T}^{\infty}(\dot{B}_{p,1}^{\frac{d}{p}-1})}&\lesssim& \|t^{\frac{s_{1}}{2}+\frac{d}{4}-\frac{\varepsilon}{2}} \theta^{\ell}\|
_{\tilde{L}_{T}^{\infty}(\dot{B}_{p,1}^{\frac {d}{p}})}
\|t^{\frac{s_{1}}{2}+\frac{d}{4}+\frac{1}{2}-\frac{\varepsilon}{2}} \nabla a^{\ell}\|_{\tilde{L}_{T}^{\infty}(\dot{B}_{p,1}^{\frac {d}{p}})}\\
&\lesssim& \mathcal{D}^{2}_{p}(T).
\end{eqnarray*}
Now, let us keep in mind that the relations $p<d$ and $d\geq 3$. For the term with $v \cdot \nabla \theta$, it follows from \eqref{Eq:1.13}, \eqref{Eq:3.25}, \eqref{Eq:3.29} and Proposition \ref{Prop2.2} adapted to tilde spaces that
\begin{eqnarray*}
\|t^{\alpha}v \cdot \nabla \theta^{h}\|^{h}_{\tilde{L}_{T}^{\infty}(\dot{B}_{p,1}^{\frac{d}{p}-2})}
&\lesssim&\|v\|_{\tilde{L}_{T}^{\infty}(\dot{B}_{p,1}^{\frac {d}{p}-1})}
\|t^{\alpha} \nabla\theta^{h}\|_{\tilde{L}_{T}^{\infty}(\dot{B}_{p,1}^{\frac{d}{p}-1})}
\lesssim \mathcal{X}_{p}(T)\mathcal{D}_{p}(T),\\
\|t^{\alpha}v^{h} \cdot \nabla \theta^{\ell}\|^{h}_{\tilde{L}_{T}^{\infty}(\dot{B}_{p,1}^{\frac{d}{p}-2})}
&\lesssim& \|t^{\alpha} v^{h}\|
_{\tilde{L}_{T}^{\infty}(\dot{B}_{p,1}^{\frac {d}{p}})}
\|\nabla\theta^{\ell}\|_{\tilde{L}_{T}^{\infty}(\dot{B}_{p,1}^{\frac {d}{p}-1})}
\lesssim \mathcal{X}_{p}(T)\mathcal{D}_{p}(T),\\
\|t^{\alpha}v^{\ell} \cdot \nabla \theta^{\ell}\|^{h}_{\tilde{L}_{T}^{\infty}(\dot{B}_{p,1}^{\frac{d}{p}-2})}
&\lesssim& \|t^{\frac{s_{1}}{2}+\frac{d}{4}-\frac{\varepsilon}{2}} v^{\ell}\|
_{\tilde{L}_{T}^{\infty}(\dot{B}_{p,1}^{\frac {d}{p}})}
\|t^{\frac{s_{1}}{2}+\frac{d}{4}+\frac{1}{2}-\frac{\varepsilon}{2}} \nabla\theta^{\ell}\|_{\tilde{L}_{T}^{\infty}(\dot{B}_{p,1}^{\frac {d}{p}})}\\
&\lesssim& \mathcal{D}^{2}_{p}(T),
\end{eqnarray*}
With the aid of Propositions \ref{Prop2.2} and \ref{Prop2.5}, \eqref{Eq:1.13}, \eqref{Eq:3.25}, \eqref{Eq:3.28} and \eqref{Eq:3.29}, we arrive at
\begin{eqnarray*}
\|t^{\alpha}\tilde{K}_{1}(a) \mathrm{div}\,v^{h}\|^{h}_{\tilde{L}_{T}^{\infty}(\dot{B}_{p,1}^{\frac{d}{p}-2})}
&\lesssim& \|a\|_{\tilde{L}_{T}^{\infty}(\dot{B}_{p,1}^{\frac {d}{p}})}
\|t^{\alpha} \mathrm{div}\,v^{h}\|_{\tilde{L}_{T}^{\infty}(\dot{B}_{p,1}^{\frac{d}{p}-2})}
\lesssim \mathcal{X}_{p}(T)\mathcal{D}_{p}(T),\\
\|t^{\alpha}\tilde{K}_{1}(a) \mathrm{div}\,v^{\ell}\|^{h}_{\tilde{L}_{T}^{\infty}(\dot{B}_{p,1}^{\frac{d}{p}-2})}
&\lesssim & \|t^{\frac{s_{1}}{2}+\frac{d}{4}-\frac{\varepsilon}{2}} a\|
_{\tilde{L}_{T}^{\infty}(\dot{B}_{p,1}^{\frac {d}{p}})}
\|t^{\frac{s_{1}}{2}+\frac{d}{4}+\frac{1}{2}-\frac{\varepsilon}{2}} \mathrm{div}\,v^{\ell}\|_{\tilde{L}_{T}^{\infty}(\dot{B}_{p,1}^{\frac {d}{p}})}\\
&\lesssim& \mathcal{D}^{2}_{p}(T),
\end{eqnarray*}
To deal with the term containing $\tilde{K}_{2}(a)\theta \,\mathrm{div}\, v$, we observe that, thanks to Propositions \ref{Prop2.2} and \ref{Prop2.5}, \eqref{Eq:1.13}, \eqref{Eq:3.3}, \eqref{Eq:3.25} and \eqref{Eq:3.29} that
\begin{eqnarray*}
\|t^{\alpha}\tilde{K}_{2}(a)\theta^{h} \mathrm{div}\, v\|^{h}_{\tilde{L}_{T}^{\infty}(\dot{B}_{p,1}^{\frac{d}{p}-2})}
&\lesssim&\|t^{\alpha}\theta^{h}\|_{\tilde{L}_{T}^{\infty}(\dot{B}_{p,1}^{\frac {d}{p}})}
\|\mathrm{div}\, v\|_{\tilde{L}_{T}^{\infty}(\dot{B}_{p,1}^{\frac{d}{p}-2})}
\lesssim \mathcal{X}_{p}(T)\mathcal{D}_{p}(T),\\
\|t^{\alpha}\tilde{K}_{2}(a)\theta^{\ell} \mathrm{div}\, v^{h}\|^{h}_{\tilde{L}_{T}^{\infty}(\dot{B}_{p,1}^{\frac{d}{p}-2})}
&\lesssim& \|\theta^{\ell}\|
_{\tilde{L}_{T}^{\infty}(\dot{B}_{p,1}^{\frac {d}{p}})}
\|t^{\alpha} \mathrm{div}\, v^{h}\|_{\tilde{L}_{T}^{\infty}(\dot{B}_{p,1}^{\frac {d}{p}})}
\lesssim \mathcal{X}_{p}(T)\mathcal{D}_{p}(T),\\
\|t^{\alpha}\tilde{K}_{2}(a)\theta^{\ell} \mathrm{div}\, v^{\ell}\|^{h}_{\tilde{L}_{T}^{\infty}(\dot{B}_{p,1}^{\frac{d}{p}-2})}
&\lesssim &\|t^{\frac{s_{1}}{2}+\frac{d}{4}-\frac{\varepsilon}{2}} \theta^{\ell}\|
_{\tilde{L}_{T}^{\infty}(\dot{B}_{p,1}^{\frac {d}{p}})}
\|t^{\frac{s_{1}}{2}+\frac{d}{4}+\frac{1}{2}-\frac{\varepsilon}{2}} \mathrm{div}\, v^{\ell}\|_{\tilde{L}_{T}^{\infty}(\dot{B}_{p,1}^{\frac {d}{p}})}\\
&\lesssim& \mathcal{D}^{2}_{p}(T).
\end{eqnarray*}
Recall that the term $\frac{\tilde{\kappa}'(a)}{(1+a)\nu} \nabla a \cdot \nabla \theta$ of $k$ is of the type $\nabla H(a) \cdot \nabla \theta$ with $H(0)=0$, and the term $k_{1}(a,\theta)$ of $k$ is of the type $K(a) \Delta \theta$ with $K(0)=0$. Consequently, from  Propositions \ref{Prop2.2} and \ref{Prop2.5}, \eqref{Eq:1.13}, \eqref{Eq:3.25}, \eqref{Eq:3.28} and \eqref{Eq:3.29},  we infer that
\begin{eqnarray*}
&&\|t^{\alpha}\nabla H(a) \cdot \nabla \theta^{h}\|^{h}_{\tilde{L}_{T}^{\infty}(\dot{B}_{p,1}^{\frac{d}{p}-2})}
+\|t^{\alpha}K(a) \Delta \theta^{h}\|^{h}_{\tilde{L}_{T}^{\infty}(\dot{B}_{p,1}^{\frac{d}{p}-2})}\\
&\lesssim& \|a\|_{\tilde{L}_{T}^{\infty}(\dot{B}_{p,1}^{\frac {d}{p}})}
\|t^{\alpha} \theta^{h}\|_{\tilde{L}_{T}^{\infty}(\dot{B}_{p,1}^{\frac{d}{p}})}
\lesssim \mathcal{X}_{p}(T)\mathcal{D}_{p}(T),\\
&&\|t^{\alpha}\nabla H(a) \cdot \nabla \theta^{\ell}\|^{h}_{\tilde{L}_{T}^{\infty}(\dot{B}_{p,1}^{\frac{d}{p}-2})}
+\|t^{\alpha}K(a) \Delta \theta^{\ell}\|^{h}_{\tilde{L}_{T}^{\infty}(\dot{B}_{p,1}^{\frac{d}{p}-2})}\\
&\lesssim& \|t^{\frac{s_{1}}{2}+\frac{d}{4}-\frac{\varepsilon}{2}} a\|_{\tilde{L}_{T}^{\infty}(\dot{B}_{p,1}^{\frac {d}{p}})}
\|t^{\frac{s_{1}}{2}+\frac{d}{4}+\frac{1}{2}-\frac{\varepsilon}{2}} \nabla \theta^{\ell}\|_{\tilde{L}_{T}^{\infty}(\dot{B}_{p,1}^{\frac{d}{p}})}
\lesssim \mathcal{D}^{2}_{p}(T).
\end{eqnarray*}
To bound the term containing $k_{2}(a,\nabla v, \nabla v)$, we take advantage of Propositions \ref{Prop2.2} and \ref{Prop2.5}, \eqref{Eq:1.13}, \eqref{Eq:3.3}, \eqref{Eq:3.25} and \eqref{Eq:3.29}, and get
\begin{eqnarray*}
\|t^{\alpha}k_{2}(a,\nabla v, \nabla v^{h})\|^{h}_{\tilde{L}_{T}^{\infty}(\dot{B}_{p,1}^{\frac{d}{p}-2})}
&\lesssim& \|\nabla v\|_{\tilde{L}_{T}^{\infty}(\dot{B}_{p,1}^{\frac {d}{p}-2})}
\|t^{\alpha} \nabla v^{h}\|_{\tilde{L}_{T}^{\infty}(\dot{B}_{p,1}^{\frac{d}{p}})}
\lesssim \mathcal{X}_{p}(T)\mathcal{D}_{p}(T),\\
\|t^{\alpha}k_{2}(a,\nabla v^{h}, \nabla v^{\ell})\|^{h}_{\tilde{L}_{T}^{\infty}(\dot{B}_{p,1}^{\frac{d}{p}-2})}
&\lesssim& \|t^{\alpha} \nabla v^{h}\|_{\tilde{L}_{T}^{\infty}(\dot{B}_{p,1}^{\frac{d}{p}})}
\|\nabla v^{\ell}\|_{\tilde{L}_{T}^{\infty}(\dot{B}_{p,1}^{\frac {d}{p}-2})}
\lesssim \mathcal{X}_{p}(T)\mathcal{D}_{p}(T),\\
\|t^{\alpha}k_{2}(a,\nabla v^{\ell}, \nabla v^{\ell})\|^{h}_{\tilde{L}_{T}^{\infty}(\dot{B}_{p,1}^{\frac{d}{p}-2})}
&\lesssim& \|t^{\frac{s_{1}}{2}+\frac{d}{4}-\frac{\varepsilon}{2}}\nabla v^{\ell}\|_{\tilde{L}_{T}^{\infty}(\dot{B}_{p,1}^{\frac{d}{p}-1})}
\|t^{\frac{s_{1}}{2}+\frac{d}{4}+\frac{1}{2}-\frac{\varepsilon}{2}}\nabla v^{\ell}\|_{\tilde{L}_{T}^{\infty}(\dot{B}_{p,1}^{\frac {d}{p}})}\\
&\lesssim& \mathcal{D}^{2}_{p}(T).
\end{eqnarray*}
Putting all the above estimates together, we discover that
\begin{equation}\label{Eq:3.30}
\sum_{j\geq j_{0}-1} 2^{j\,(\frac{d}{p}-1)}\sup_{t\in [1,T]}t^{\alpha}Z_{j}(t)
\lesssim\mathcal{X}_{p}(T)\mathcal{D}_{p}(T)+\mathcal{D}^{2}_{p}(T).
\end{equation}
Plugging \eqref{Eq:3.30} in \eqref{Eq:3.27}, and remembering \eqref{Eq:3.26} and \eqref{Eq:3.23}, we end up with \eqref{Eq:3.22}. This completes the proof of Proposition \ref{Prop3.2}.
\end{proof}
\subsection{Third step: Decay and gain of regularity for the high frequencies of $(\nabla v, \theta)$}
Let us prove that the parabolic smoothing effect provided by the last two equations of \eqref{Eq:1.8} allows us to get gain of regularity and decay altogether for $v$ and $\theta$. Precisely, one has
\begin{prop}\label{Prop3.3}
	If $p$ satisfies \eqref{Eq:1.9}, then it holds that for all $t\geq0$,
	\begin{equation}\label{Eq:3.31}
	\left\|\tau ^{\alpha}(\nabla v, \theta)\right\|^{h}_{\tilde{L}^{\infty}_{t}(\dot{B}^{\frac{d}{p}}_{p,1})}
	\lesssim \|(\nabla a_{0},v_{0})\|^{h}_{\dot{B}_{p,1}^{\frac {d}{p}-1}}+\|\theta_{0}\|^{h}_{\dot{B}_{p,1}^{\frac {d}{p}-2}}+\mathcal{X}^{2}_{p}(t)+\mathcal{D}^{2}_{p}(t)
	\end{equation}
with $\alpha=s_{1}+\frac{d}{2}+\frac{1}{2}-\varepsilon$ for sufficiently small $\varepsilon>0$, where $\mathcal{X}_{p}(t)$ and $\mathcal{D}_{p}(t)$ have been defined by \eqref{Eq:3.1} and \eqref{Eq:1.13}, respectively.
\end{prop}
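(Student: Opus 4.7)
The strategy is to apply the parabolic maximum‑regularity estimate of Proposition \ref{Prop2.7}, together with Remark \ref{Rem2.2} for the Lam\'e system, to the two equations
\begin{align*}
\d_t v-\tilde{\mathcal{A}}v &=-\nabla a-\gamma\nabla\theta+g,\\
\d_t\theta-\beta\Delta\theta &=-\gamma\,\mathrm{div}\,v+k.
\end{align*}
After localizing with $\dot\Delta_j$ for $j\geq j_0-1$ and invoking the pointwise semigroup bound $\|e^{(t-\tau)\tilde{\mathcal{A}}}\dot\Delta_j f\|_{L^p}\lesssim e^{-c_0 2^{2j}(t-\tau)}\|\dot\Delta_j f\|_{L^p}$ (and its heat analogue), I would multiply the resulting Duhamel identities by the weight $t^{\alpha}2^{j(d/p+1)}$ (respectively $t^{\alpha}2^{jd/p}$ for the $\theta$ equation) and split the time integral at $t/2$. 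On $[t/2,t]$ one has $\tau\sim t$ and $\int_{t/2}^t e^{-c_0 2^{2j}(t-\tau)}d\tau\lesssim 2^{-2j}$; this factor of $2^{-2j}$ absorbs exactly the two extra derivatives relative to Proposition \ref{Prop3.2}. On $[0,t/2]$ (and for the initial‑data contribution), the elementary pointwise bound $\sup_{t\geq 0}t^{\alpha}2^{2j\alpha}e^{-c_0 2^{2j}t/2}\lesssim 1$, which holds because $\alpha>1$, combined with the constraint $j\geq j_0-1$, yields a contribution bounded by $\|v_0\|^h_{\dot B^{d/p-1}_{p,1}}$ and $\|\theta_0\|^h_{\dot B^{d/p-2}_{p,1}}$.

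\medskip

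After this reduction the linear coupling terms are easy: $-\nabla a$ in the $v$ equation feeds into $\|\tau^{\alpha}\nabla a\|^h_{\widetilde L^{\infty}_t(\dot B^{d/p-1}_{p,1})}$, controlled by Proposition \ref{Prop3.2}; likewise the $-\gamma\,\mathrm{div}\,v$ source of the $\theta$ equation reduces to $\|\tau^{\alpha}v\|^h_{\widetilde L^{\infty}_t(\dot B^{d/p-1}_{p,1})}$, again bounded by Proposition \ref{Prop3.2}. The one delicate coupling is $-\gamma\nabla\theta$ in the $v$ equation, whose contribution is precisely the new norm $\|\tau^{\alpha}\theta\|^h_{\widetilde L^{\infty}_t(\dot B^{d/p}_{p,1})}$ that we are attempting to bound. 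I would therefore treat the $\theta$ estimate first (its source is self‑contained once Proposition \ref{Prop3.2} is available) and only afterwards close the estimate on $v$, feeding in the freshly acquired bound on $\theta$.

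\medskip

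For the nonlinear sources $g$ and $k$, I would follow the scheme of the proof of Proposition \ref{Prop3.2}: each product is decomposed along the low/high‑frequency splitting of its factors and handled by means of Propositions \ref{Prop2.2} and \ref{Prop2.5}, together with \eqref{Eq:3.3}, the $\mathcal{X}_p$‑bounds \eqref{Eq:3.24}--\eqref{Eq:3.25}, the $\mathcal{D}_p$‑bounds \eqref{Eq:3.28}--\eqref{Eq:3.29} for low‑frequency pieces, and the high‑frequency time‑weighted norms built into \eqref{Eq:1.13}. The barotropic‑type contributions of $g$ go through verbatim as in \cite{XJ}, whereas the new couplings\,---\,$K_2(a)\nabla\theta$, $\theta\nabla K_3(a)$, $v\cdot\nabla\theta$, $\widetilde K_1(a)\,\mathrm{div}\,v$, $\widetilde K_2(a)\theta\,\mathrm{div}\,v$, $H'(a)\nabla a\cdot\nabla\theta$, $k_1(a,\theta)$ and $k_2(a,\nabla v,\nabla v)$\,---\,are split as $\theta=\theta^{\ell}+\theta^{h}$ (and similarly for $v$) and each piece is bounded exactly as in the corresponding estimate from Proposition \ref{Prop3.2}, with regularity indices shifted up by one.

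\medskip

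The main obstacle is the coupling through $\gamma\nabla\theta$: unlike the isentropic case, the Lam\'e equation alone does not close because its source invokes the very quantity one is trying to estimate. The remedy\,---\,estimating $\theta$ first and then $v$\,---\,is possible precisely because the $\theta$ equation is pure heat with a constant coefficient, and its forcing only requires controls already delivered by Proposition \ref{Prop3.2}. A secondary technical point is the regularity asymmetry $\theta\in\dot B^{d/p}_{p,1}$ versus $v\in\dot B^{d/p+1}_{p,1}$: terms such as $k_1(a,\theta)\sim K(a)\Delta\theta$ thus require two full derivatives of $\theta$, which are supplied exactly by the $2^{-2j}$ factor produced by the heat kernel on the high‑frequency regime.
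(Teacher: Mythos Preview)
Your overall strategy is sound and parallels the paper's, but the execution differs. The paper avoids the time-splitting at $t/2$ entirely: it observes that $t^{\alpha}\tilde{\mathcal{A}}v$ and $t^{\alpha}\Delta\theta$ themselves satisfy Lam\'e/heat equations with \emph{zero initial data} and right-hand sides
\[
t^{\alpha}\tilde{\mathcal{A}}g+\alpha t^{\alpha-1}\tilde{\mathcal{A}}v-t^{\alpha}\tilde{\mathcal{A}}\nabla a-\gamma t^{\alpha}\tilde{\mathcal{A}}\nabla\theta,
\qquad
t^{\alpha}\Delta k+\alpha t^{\alpha-1}\Delta\theta-\gamma t^{\alpha}\Delta\,\mathrm{div}\,v,
\]
and then applies Proposition~\ref{Prop2.7} directly in $\tilde L^{\infty}$. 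The commutator term $\alpha t^{\alpha-1}(\cdot)$ is exactly where $\alpha>1$ enters, via $\|\tau^{\alpha-1}v\|^{h}_{\tilde L^{\infty}_{t}(\dot B^{d/p-1}_{p,1})}\lesssim\|\langle\tau\rangle^{\alpha}v\|^{h}_{\tilde L^{\infty}_{t}(\dot B^{d/p-1}_{p,1})}$, which is already supplied by Proposition~\ref{Prop3.2}. Both routes use the same parabolic smoothing and the same ordering (close $\theta$ first, then feed into $v$); the paper's device is simply cleaner because the initial data and the early-time source contributions never have to be isolated.

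There is a genuine gap in your $[0,t/2]$ step: you account only for the initial-data term, but the source integrals on $[0,t/2]$ (both linear couplings and the nonlinearities $g,k$) must also be bounded there. After your kernel-decay argument these reduce to $L^{1}_{t}$-type norms of the sources at regularity $d/p-1$ (resp.\ $d/p-2$); for the nonlinear pieces this gives $\mathcal{X}_{p}^{2}$, but for the linear couplings $\nabla a$, $\nabla\theta$, $\mathrm{div}\,v$ it produces a \emph{linear} $\mathcal{X}_{p}$ contribution rather than $\mathcal{X}_{p}^{2}$. That is harmless for Theorem~\ref{Thm1.1} (since $\mathcal{X}_{p}\lesssim\mathcal{X}_{p,0}$ is itself initial-data sized), but it does not reproduce \eqref{Eq:3.31} as written. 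The paper's trick of passing to the equation for $t^{\alpha}\tilde{\mathcal{A}}v$ sidesteps this, because every linear source arrives already carrying the weight $t^{\alpha}$ and is absorbed directly by Proposition~\ref{Prop3.2}. (A minor aside: the bound $\sup_{t\geq0}t^{\alpha}2^{2j\alpha}e^{-c_{0}2^{2j}t/2}\lesssim1$ holds for any $\alpha>0$; what actually requires $\alpha>1$ is the subsequent comparison $2^{j(d/p+1-2\alpha)}\lesssim 2^{j(d/p-1)}$ on the range $j\geq j_{0}-1$.)
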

\begin{proof}
It follows from the second and third equations in \eqref{Eq:1.8} that
	\begin{equation}\label{Eq:3.32}
	\left\{
	\begin{array}{l}
	\partial _{t}v-\tilde{\mathcal{A}}v=g-\nabla a-\gamma \nabla \theta,\\[2mm]
	\partial_{t}\theta-\beta\Delta\theta=k-\gamma\mathrm{div}\,v.
	\end{array}
	\right.
	\end{equation}
In order to prove \eqref{Eq:3.31}, we reformulate \eqref{Eq:3.32} as follows
\begin{equation*}
	\left\{
	\begin{array}{l}
	\partial_{t}(t^{\alpha}\tilde{\mathcal{A}}v)-\tilde{\mathcal{A}}(t^{\alpha}\tilde{\mathcal{A}}v)= t^{\alpha}\tilde{\mathcal{A}}g+\alpha t^{\alpha-1}\tilde{\mathcal{A}}v-t^{\alpha}\tilde{\mathcal{A}}\nabla a-\gamma t^{\alpha}\tilde{\mathcal{A}}\nabla \theta,\\[2mm]
	\partial_{t}(t^{\alpha}\Delta \theta)-\beta \Delta(t^{\alpha}\Delta \theta)= t^{\alpha}\Delta k+\alpha t^{\alpha-1}\Delta\theta-\gamma t^{\alpha}\Delta \mathrm{div}\,v,\\ [2mm]
	(t^{\alpha}\tilde{\mathcal{A}}v, t^{\alpha}\Delta \theta)|_{t=0}=(0,0).
	\end{array}
	\right.
	\end{equation*}
Taking advantage of Proposition \ref{Prop2.7}, Remark \ref{Rem2.2} and Bernstein inequality, we have for $j\geq j_{0}-1$,
\begin{eqnarray*}
\|\tau^{\alpha}\nabla v\|^{h}_{\tilde{L}^{\infty}_{t}(\dot{B}^{\frac{d}{p}}_{p,1})}
&\lesssim&\|\tau^{\alpha}g\|^{h}_{\tilde{L}^{\infty}_{t}(\dot{B}^{\frac{d}{p}-1}_{p,1})}+\|\tau^{\alpha-1}v\|^{h}_{\tilde{L}^{\infty}_{t}(\dot{B}^{\frac{d}{p}-1}_{p,1})}
+\|\tau^{\alpha} \nabla a\|^{h}_{\tilde{L}^{\infty}_{t}(\dot{B}^{\frac{d}{p}-1}_{p,1})}\\
&&+\|\tau^{\alpha}\theta\|^{h}_{\tilde{L}^{\infty}_{t}(\dot{B}^{\frac{d}{p}}_{p,1})},
\\
\|\tau^{\alpha}\theta\|^{h}_{\tilde{L}^{\infty}_{t}(\dot{B}^{\frac{d}{p}}_{p,1})}
&\lesssim&\|\tau^{\alpha}k\|^{h}_{\tilde{L}^{\infty}_{t}(\dot{B}^{\frac{d}{p}-2}_{p,1})}+\|\tau^{\alpha-1}\theta\|^{h}_{\tilde{L}^{\infty}_{t}(\dot{B}^{\frac{d}{p}-2}_{p,1})}+\|\tau^{\alpha} v\|^{h}_{\tilde{L}^{\infty}_{t}(\dot{B}^{\frac{d}{p}-1}_{p,1})}.
\end{eqnarray*}
As $\alpha>1$ for small enough $\varepsilon>0$, we see that
\begin{eqnarray*}
\|\tau^{\alpha-1}(\nabla v, \theta)\|^{h}_{\tilde{L}^{\infty}_{t}(\dot{B}^{\frac{d}{p}-2}_{p,1})}
&\lesssim&  \|\langle\tau\rangle^{\alpha}(\nabla v, \theta)\|^{h}_{\tilde{L}^{\infty}_{t}(\dot{B}^{\frac{d}{p}-2}_{p,1})},\\
\|\tau^{\alpha} (\nabla a,v)\|^{h}_{\tilde{L}^{\infty}_{t}(\dot{B}^{\frac{d}{p}-1}_{p,1})}
&\lesssim& \|\langle\tau\rangle^{\alpha} (\nabla a, v)\|^{h}_{\tilde{L}^{\infty}_{t}(\dot{B}^{\frac{d}{p}-1}_{p,1})}.
\end{eqnarray*}
Furthermore, we deduce that
\begin{eqnarray} \label{Eq:3.33}
\|\tau^{\alpha} (\nabla v, \theta)\|^{h}_{\tilde{L}^{\infty}_{t}(\dot{B}^{\frac{d}{p}}_{p,1})}
&\lesssim& \|\tau^{\alpha} g\|^{h}_{\tilde{L}^{\infty}_{t}(\dot{B}^{\frac{d}{p}-1}_{p,1})}
+\|\tau^{\alpha} k\|^{h}_{\tilde{L}^{\infty}_{t}(\dot{B}^{\frac{d}{p}-2}_{p,1})} \nonumber\\
&&+\left\|\langle\tau\rangle^{\alpha}(\nabla a,v)\right\|^{h}_{\tilde{L}^{\infty}_{t}(\dot{B}^{\frac{d}{p}-1}_{p,1})}
+\|\langle\tau\rangle^{\alpha}\theta\|^{h}_{\tilde{L}^{\infty}_{t}(\dot{B}^{\frac{d}{p}-2}_{p,1})}.
\end{eqnarray}
With the aid of \eqref{Eq:3.22}, the last two norms of the r.h.s of \eqref{Eq:3.33} can be bounded by
	$$\|(\nabla a_{0},v_{0})\|^{h}_{\dot{B}_{p,1}^{\frac {d}{p}-1}}+\|\theta_{0}\|^{h}_{\dot{B}_{p,1}^{\frac {d}{p}-2}}+\mathcal{D}^{2}_{p}(t)+\mathcal{X}^{2}_{p}(t).$$
Bounding the norms $\left\|\tau^{\alpha}g\right\|^{h}_{\tilde{L}^{\infty}_{t}(\dot{B}^{\frac{d}{p}-1}_{p,1})}$ and $\|\tau^{\alpha} k\|^{h}_{\tilde{L}^{\infty}_{t}(\dot{B}^{\frac{d}{p}-2}_{p,1})}$ are exactly same as the second step and those work of \cite{XJ}, one can conclude that \eqref{Eq:3.31} readily.
\end{proof}
Finally, adding up \eqref{Eq:3.31} to \eqref{Eq:3.22} and  \eqref{Eq:3.21} yields for all $T\geq0$,
\begin{equation*}
\mathcal{D}_{p}(T)\lesssim \mathcal{D}_{p,0}
+\|(\nabla a_{0},v_{0})\|^{h}_{\dot{B}_{p,1}^{\frac {d}{p}-1}}+\|\theta_{0}\|^{h}_{\dot{B}_{p,1}^{\frac {d}{p}-2}}+\mathcal{X}^{2}_{p}(T)+\mathcal{D}^{2}_{p}(T).
\end{equation*}
The global existence result (see for example Theorem 1.1 in \cite{DX2}) ensures that $\mathcal{X}_{p}(t)\lesssim\mathcal{X}_{p,0}\ll1$
and as
\begin{equation*}
\|(a_{0},v_{0},\theta_{0})\|^{\ell}_{\dot{B}^{\frac{d}{2}-1}_{2,1}}\lesssim \|(a_{0},v_{0},\theta_{0})\|^{\ell}_{\dot{B}^{-s_{1}}_{2,\infty}},
\end{equation*}
one can conclude that \eqref{Eq:1.12} is satisfied for all time if $\mathcal{D}_{p,0}$ and $\mathcal{X}_{p,0}$
are small enough. This completes the proof of Theorem \ref{Thm1.1}.
\subsection{The proof of Corollary \ref{Cor1.1}}
\begin{proof}
It is suffices to show the decay estimate for $\theta$. With the aid of the embedding $\dot{B}_{p,1}^{s+d\,(\frac{1}{p}-\frac{1}{r})}\hookrightarrow\dot{B}^{s}_{r,1}$ for $p\leq r\leq\infty$, we arrive at
\begin{eqnarray*}
	&&\sup_{t\in[0,T]} t^{-\frac{s_{1}+s}{2}-\frac{d}{2}(\frac{1}{2}-\frac{1}{r})}\|\Lambda ^{s}\theta\|_{\dot{B}^{0}_{r,1}}\lesssim\sup_{t\in[0,T]} t^{-\frac{s_{1}+s}{2}-\frac{d}{2}(\frac{1}{2}-\frac{1}{r})}\|\theta\|_{\dot{B}^{s}_{r,1}}\\
&\lesssim& \sup_{t\in[0,T]} t^{-\frac{s_{1}+s}{2}-\frac{d}{2}(\frac{1}{2}-\frac{1}{r})}\|\theta\|_{\dot{B}_{p,1}^{s+d\,(\frac{1}{p}-\frac{1}{r})}}\\
&\lesssim&\| t^{-\frac{s_{1}+s}{2}-\frac{d}{2}(\frac{1}{2}-\frac{1}{r})}\theta
\|^{\ell}_{L^{\infty}_{T}(\dot{B}^{s+d\,(\frac{1}{2}-\frac{1}{r})}_{2,1})}
	+\| t^{-\frac{s_{1}+s}{2}-\frac{d}{2}(\frac{1}{2}-\frac{1}{r})}\theta\|^{h}_{L^{\infty}_{T}(\dot{B}^{s+d\,(\frac{1}{p}-\frac{1}{r})}_{p,1})}.
\end{eqnarray*}
Thanks to \eqref{Eq:1.12} and \eqref{Eq:1.13}, we discover that
\begin{eqnarray*}
\| t^{-\frac{s_{1}+s}{2}-\frac{d}{2}(\frac{1}{2}-\frac{1}{r})}\theta
\|^{\ell}_{L^{\infty}_{T}(\dot{B}^{s+d\,(\frac{1}{2}-\frac{1}{r})}_{2,1})}
&\lesssim& \| \langle t\rangle^{-\frac{s_{1}+s}{2}-\frac{d}{2}(\frac{1}{2}-\frac{1}{r})}\theta
\|^{\ell}_{L^{\infty}_{T}(\dot{B}^{s+d\,(\frac{1}{2}-\frac{1}{r})}_{2,1})} \\
&\lesssim&
 \big(\mathcal{D}_{p,0}+\left\|\left(\nabla a_{0},v_{0}\right)\right\|^{h}_{\dot{B}_{p,1}^{\frac {d}{p}-1}}+\left\|\theta_{0}\right\|^{h}_{\dot{B}_{p,1}^{\frac {d}{p}-2}}\big),
\end{eqnarray*}
where we used the fact $-s_{1}<s+d\,(\frac{1}{2}-\frac{1}{r}) \leq \frac{d}{2}+1$ for $-\tilde{s}_{1}<s+d\,(\frac{1}{p}-\frac{1}{r})\leq\frac{d}{p}+1$.
On the other hand, if $\varepsilon>0$ is small enough, then we have
$ \frac{s_{1}}{2}+\frac{d}{2}+\frac{1}{2}-\varepsilon>1+\frac{d}{4}-\varepsilon\geq\frac{d}{4}
\geq \frac{s}{2}+\frac{d}{2}(\frac{1}{2}-\frac{1}{r})$ for $-\tilde{s}_{1}<s+d\,(\frac{1}{p}-\frac{1}{r})\leq\frac{d}{p}$, which ensures that $\alpha\geq \frac{s_{1}+s}{2}+\frac{d}{2}(\frac{1}{2}-\frac{1}{r})$. Consequently, we deduce that
	\begin{eqnarray*}
\|t^{-\frac{s_{1}+s}{2}-\frac{d}{2}(\frac{1}{2}-\frac{1}{r})}\theta\|^{h}_{L^{\infty}_{T}(\dot{B}^{s+d\,(\frac{1}{p}-\frac{1}{r})}_{p,1})}
	&\lesssim&\| t^{-\frac{s_{1}+s}{2}-\frac{d}{2}(\frac{1}{2}-\frac{1}{r})}\theta\|^{h}_{\tilde{L}^{\infty}_{T}(\dot{B}^{s+d\,(\frac{1}{p}-\frac{1}{r})}_{p,1})}\\
	&\lesssim& \big(\mathcal{D}_{p,0}+\left\|\left(\nabla a_{0},v_{0}\right)\right\|^{h}_{\dot{B}_{p,1}^{\frac {d}{p}-1}}+\left\|\theta_{0}\right\|^{h}_{\dot{B}_{p,1}^{\frac {d}{p}-2}}\big).
	\end{eqnarray*}
Hence, using $\dot{B}^{0}_{r,1}\hookrightarrow L^{r}$ yields the desired result for $\theta$. Proving the inequalities for $a$ and $v$ is similar. The proof of Corollary \ref{Cor1.1} is complete.
\end{proof}

\section*{Acknowledgments}
The first author is supported by the Nanjing University of Aeronautics and Astronautics PhD short-term visiting scholar project (181004DF08).
The second author (J. Xu) is grateful to Professor R. Danchin for addressing the conjecture on the regularity of low frequencies when visiting the LAMA in UPEC. His research is partially supported by the National
Natural Science Foundation of China (11471158, 11871274) and the Fundamental Research Funds for the Central
Universities (NE2015005).

\end{document}